\documentclass[11pt]{article}

\usepackage{amssymb,amsmath,amsfonts,amsthm}
\usepackage{latexsym}
\usepackage{graphics}
\usepackage{indentfirst}
\usepackage{lmodern}
\usepackage{hyperref}
\usepackage{ mathrsfs }
\usepackage{array}
\usepackage{mathtools}
\usepackage{graphicx}
\usepackage{setspace}
\usepackage{enumerate}
\usepackage{tikz}
\usepackage{url,color}
\usepackage{chngcntr}
\usepackage{multicol}

\usepackage{comment}

\setlength{\textwidth}{15.5cm} \setlength{\headheight}{0.5cm} \setlength{\textheight}{21.5cm}
\setlength{\oddsidemargin}{0.25cm} \setlength{\evensidemargin}{0.25cm} \setlength{\topskip}{0.5cm}
\setlength{\footskip}{1.5cm} \setlength{\headsep}{0cm} \setlength{\topmargin}{0.5cm}

\newenvironment{customthm}[1]
  {\innercustomthm}
  {\endinnercustomthm}

\newenvironment{customcor}[1]
  {\innercustomcor}
  {\endinnercustomcor}

\newenvironment{customconj}[1]
  {\innercustomconj}
  {\endinnercustomconj}
  
\newtheorem*{thm*}{Theorem}
\newtheorem{thm}{Theorem}
\newtheorem{lem}[thm]{Lemma}
\newtheorem{pro}[thm]{Proposition}
\newtheorem{obs}[thm]{Observation}
\newtheorem{rmk}[thm]{Remark}
\newtheorem{cor}[thm]{Corollary}
\newtheorem{conj}[thm]{Conjecture}

\newtheorem{fact}[thm]{Fact}

\newcommand{\N}{\mathbb{N}}
\newcommand{\F}{\mathbb{F}}

\newcommand{\Z}{\mathbb{Z}}

\newcommand{\R}{\mathbb{R}}

\definecolor{vividviolet}{rgb}{0.62, 0.0, 1.0}
\definecolor{OliveGreen}{rgb}{0.0, .6, .1}

\newcommand{\tb}{\textcolor{black}}

\begin{document}

\title{A Polynomial Method for Counting Colorings of Sparse Graphs
}

\author{Samantha L. Dahlberg$^1$, Hemanshu Kaul$^2$, and Jeffrey A. Mudrock$^3$}

\footnotetext[1]{Department of Applied Mathematics, Illinois Institute of Technology, Chicago, IL 60616.  E-mail:  {\tt {sdahlberg@iit.edu}}}
\footnotetext[2]{Department of Applied Mathematics, Illinois Institute of Technology, Chicago, IL 60616.  E-mail:  {\tt {kaul@iit.edu}}}
\footnotetext[3]{Department of Mathematics and Statistics, University of South Alabama, Mobile, AL 36688.  E-mail:  {\tt {mudrock@southalabama.edu}}}

\maketitle

\begin{abstract}
The notion of $S$-labeling of graphs, where $S$ is a subset of a symmetric group, was introduced in 2019 by Jin, Wong, and Zhu.  This notion provides the framework for a common generalization of various well studied notions of graph coloring, including classical coloring, signed $k$-coloring, signed $\mathbb{Z}_k$-coloring, DP (or correspondence) coloring, group coloring, and coloring of gained graphs.  In this paper, we present a unified and simple polynomial method for giving exponential lower bounds on the number of colorings of an $S$-labeled graph for all such $S$. This algebraic technique allows us to prove new lower bounds on the number of colorings of any $S$-labeling of graphs satisfying certain sparsity conditions. We also investigate how the structure of $S$ can be exploited to improve the applicability of these bounds. Our results give new lower bounds on the number of DP-colorings, and consequently the number of all types of colorings listed above.  This includes the chromatic polynomial and the number of list colorings of families of planar graphs, and the number of colorings of signed graphs. These enumerative bounds improve previously known results or are the first such known results.

\medskip
  
\noindent {\bf Keywords.} chromatic polynomial, DP-coloring, correspondence coloring, signed colorings, $S$-labeled graphs, DP color function, list color function, 

\noindent \textbf{Mathematics Subject Classification.} 05C15, 12E10, 05C25, 05C30, 05C31, 05A99

\end{abstract}

\section{Introduction}\label{intro}

The coloring of planar graphs and its subfamilies has a long history starting with the four color conjecture in the nineteenth century. This history is intertwined with the related conjectures on existence of exponentially many such colorings (as a function of the number of vertices), going back at the least to Birkhoff's work in 1930~\cite{B30}, and Birkhoff and Lewis' enumerative extension of the four color conjecture in 1946~\cite{BL46} that for any $n$-vertex planar graph $G$, the chromatic polynomial satisfies $P(G,k) \ge k(k - 1)(k-2)(k-3)^{n-3}$ for all real numbers $k \ge 4$. They proved this is true for $k\ge 5$, thus giving exponentially many 5-colorings of planar graphs. After Thomassen~\cite{T94} in 1994 famously proved all planar graphs are 5-choosable, there has been much work done on showing that planar graphs and their subfamilies have exponentially many list $k$-colorings for appropriate $k\in \{3,4,5\}$~\cite{ADPT13,BG22, DKM22, DP22, LT21, PS23, T06, T07, T07b, T23}. These proofs are typically intricate topological arguments specialized to the family of planar graphs under consideration. 

In this paper, we give a unified and simple polynomial method~(\cite{T14}) for proving many such exponential lower bounds on the number of colorings of sparse graphs without requiring any topological assumptions. Our results are set in the general framework of coloring $S$-labeled graphs, where $S$ is a subset of a symmetric group, which includes classical and many other well-studied notions of graph coloring, such as signed $k$-coloring, signed $\mathbb{Z}_k$-coloring, DP (or correspondence) coloring, group coloring, and coloring of gained graphs,  as special cases. In fact, for each such choice of $S$ there is a corresponding notion of coloring of a graph. The subset relation on the set of nonempty subsets of the symmetric group, induces a partial order (in fact, a distributive lattice) on all these notions of coloring with the so-called DP coloring as the unique maximal element and the classical coloring as a minimal element (see Remark~\ref{rmk:poset}). 

Our results give an unified algebraic perspective and toolbox for proving lower bounds on the enumerative functions of \textit{all} these notions of coloring, as well as list coloring, for any appropriate sparse graph. We also investigate how the structure of $S$ can be exploited to improve the applicability of these bounds. Application of our lower bounds to families of planar graphs either improve previously known enumerative results or are the first such known results. Many of these families of planar graphs have previously been studied in the literature but only from the perspective of their various chromatic numbers. There are no previously known lower bounds, much less exponential lower bounds, on the enumerative functions of any notion of coloring for these classes of planar graphs that are considered in our applications. For example, Signed graphs and their colorings have been widely studied since 1980s (see a recent survey in \cite{SV21}), but we do not know of any previous results on bounding the number of such colorings.

In the rest of this section, we formally define these concepts and give an outline of our results.

\subsection{Graph Coloring and Counting Colorings}

 We begin by establishing some terminology, notation, and conventions that will be used throughout this paper.  All graphs are nonempty, finite, undirected loopless multigraphs or nonempty, finite, simple digraphs.  For the purposes of this paper, a simple graph is a multigraph without any parallel edges between vertices.  Generally speaking we follow West~\cite{W01} for terminology and notation.  The set of natural numbers is $\N = \{1,2,3, \ldots \}$.  For $m \in \N$, we write $[m]$ for the set $\{1, \ldots, m \}$.  Suppose $G$ is a multigraph.  When $u,v \in V(G)$ we use $E_G(u,v)$ to denote the set of edges in $E(G)$ with endpoints $u$ and $v$ (note $E_G(u,v) = E_G(v,u)$), and we let $e_G(u,v)$ denote the number of elements in $E_G(u,v)$.  We let $\mu(G)$ be the maximum value of $e_G(u,v)$ taken over all pairs of vertices in $G$.  Also, the \emph{underlying graph of $G$}, $G'$, is the simple graph formed from $G$ by deleting edges so that $e_{G'}(u,v)=1$ whenever $u$ and $v$ are adjacent in $G$.  When $G$ is a simple graph, we can refer to edges by their endpoints; for example, if $u$ and $v$ are adjacent in the simple graph $G$, $uv$ or $vu$ refers to the edge between $u$ and $v$.  When $D$ is a digraph we refer to edges as ordered pairs.

In classical vertex coloring one wishes to color the vertices of a graph $G$ with up to $k$ colors from $[k]$ so that adjacent vertices receive different colors, a so-called \emph{proper $k$-coloring}.  The \emph{chromatic number} of a graph, denoted $\chi(G)$, is the smallest $k$ such that $G$ has a proper $k$-coloring.  We say $G$ is \emph{$k$-colorable} whenever $k \geq \chi(G)$.  List coloring is a generalization of classical vertex coloring introduced independently by Vizing~\cite{V76} and Erd\H{o}s, Rubin, and Taylor~\cite{ET79} in the 1970s.  In list coloring, we associate a \emph{list assignment} $L$ with a graph $G$ so that each vertex $v \in V(G)$ is assigned a list of available colors $L(v)$ (we say $L$ is a list assignment for $G$).  We say $G$ is \emph{$L$-colorable} if there is a proper coloring $f$ of $G$ such that $f(v) \in L(v)$ for each $v \in V(G)$ (we refer to $f$ as a \emph{proper $L$-coloring} of $G$).  A list assignment $L$ is called a \emph{$k$-assignment} for $G$ if $|L(v)|=k$ for each $v \in V(G)$.  The \emph{list chromatic number} of a graph $G$, denoted $\chi_\ell(G)$, is the smallest $k$ such that $G$ is $L$-colorable whenever $L$ is a $k$-assignment for $G$.

In 1912 Birkhoff~\cite{B12} introduced the notion of the chromatic polynomial with the hope of using it to make progress on the four color problem.  For $k \in \N$, the \emph{chromatic polynomial} of a graph $G$, $P(G,k)$, is the number of proper $k$-colorings of $G$. It is well-known that $P(G,k)$ is a polynomial in $k$ of degree $|V(G)|$.  The notion of chromatic polynomial was extended to list coloring in the early 1990s by Kostochka and Sidorenko~\cite{AS90}.  If $L$ is a list assignment for $G$, we use $P(G,L)$ to denote the number of proper $L$-colorings of $G$. The \emph{list color function} of $G$, denoted $P_\ell(G,k)$, is the minimum value of $P(G,L)$ where the minimum is taken over all possible $k$-assignments $L$ for $G$.  Since a $k$-assignment could assign the same $k$ colors to every vertex in a graph, it is clear that $P_\ell(G,k) \leq P(G,k)$ for each $k \in \N$.  In general, the list color function can differ significantly from the chromatic polynomial for small values of $k$.  One reason for this is that a graph can have a list chromatic number that is much higher than its chromatic number.   On the other hand,  in 1992, Donner~\cite{D92} showed that for any graph $G$ there is a $m \in \N$ such that $P_\ell(G,k) = P(G,k)$ whenever $k \geq m$.  This was recently improved by Dong and Zhang~\cite{DZ22} who showed that $P_\ell(G,k) = P(G,k)$ whenever $k \geq |E(G)|-1$.  

One of our main interests in this paper is a generalization of list coloring called DP-coloring.  In 2015, Dvo\v{r}\'{a}k and Postle~\cite{DP15} introduced DP-coloring (they called it correspondence coloring) in order to prove that every planar graph without cycles of lengths 4 to 8 is 3-choosable.  DP-coloring has been extensively studied over the past 8 years (see e.g.,~\cite{B17, HK21, KM19, KM20, KM21, M18, MT20}). We will introduce DP-coloring via the notion of $S$-labeling.
 
\subsection{Coloring S-labeled Graphs and DP-Coloring}

The notion of $S$-labeling was introduced in 2019~\cite{JW19} as a common generalization of coloring of signed graphs, signed $\mathbb{Z}_k$-coloring, DP-coloring, group coloring, coloring of gained graphs and more (see Remark~\ref{rmk:poset} below). Suppose that $S$ is some nonempty subset of the symmetric group $S_A$ over some set $A$. We will often refer to the elements of $A$ as \emph{colors}.  We use $I_A$ to denote the subset of $S_A$ that contains only the identity permutation.  We will use $S_k$ (resp., $I_k$) to denote $S_{\mathbb{Z}_k}$ (resp., $I_{\mathbb{Z}_k}$). An \emph{S-labeling} of a multigraph $G$ is a pair $(D, \sigma)$ consisting of an orientation $D$ of the underlying graph of $G$ and a function $\sigma : E(D) \rightarrow \bigcup_{i=1}^{\mu(G)} S^i$   with the property $\sigma(u,v) \in S^{e_G(u,v)}$ for each $(u,v) \in E(D)$~\footnote{In the case that $G$ is a simple graph, we take the codomain of $\sigma$ to be $S$.}. Essentially $\sigma$ assigns a $e_G(u,v)$-tuple of permutations to $(u,v)$, that is a permutation to each edge of the multigraph.  A pictorial representation of an $S$-labeling when $G=P_3$ and $S = S_4$ is in Figure~\ref{fig:cover_visual}.

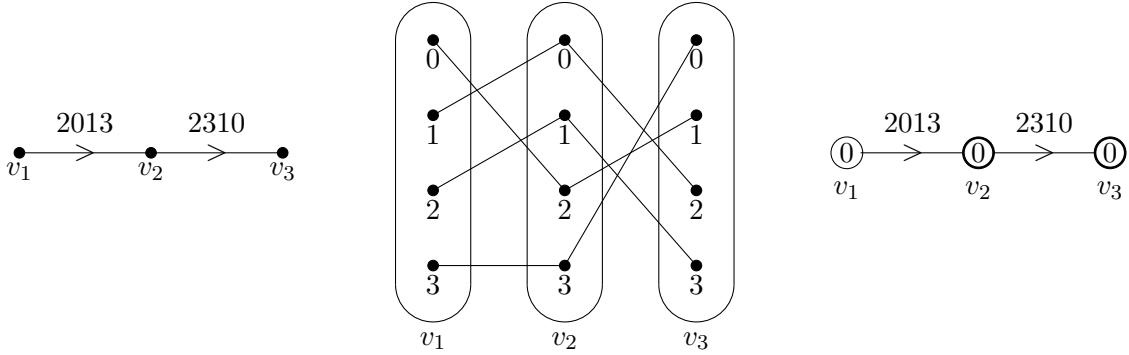
\begin{figure}[t]
\begin{center}
\begin{tikzpicture}
\coordinate (A) at (0,0);
\coordinate (B) at (1.75,0);
\coordinate (C) at (3.5,0);
\draw (A)--(B)--(C);
\filldraw[black] (A) circle [radius=2pt] node[below] {$v_1$};
\filldraw[black] (B) circle [radius=2pt] node[below] {$v_2$};
\filldraw[black] (C) circle [radius=2pt] node[below] {$v_3$};
\draw (.875,0) node {$>$};
\draw (.875,.4) node {$2013$};
\draw (2.625,0) node {$>$};
\draw (2.625,.4) node {$2310$};
%%%%%%%%%%%%%%%%%%%%%%%%%%%%
\begin{scope}[shift={(11,0)}]
\coordinate (A) at (0,0);
\coordinate (B) at (1.75,0);
\coordinate (C) at (3.5,0);
\draw (A)--(B)--(C);
\draw (.875,0) node {$>$};
\draw (.875,.4) node {$2013$};
\draw (2.625,0) node {$>$};
\draw (2.625,.4) node {$2310$};
\draw[black] (A) circle [radius=6pt];
\filldraw[black] (B) circle [radius=6pt];
\filldraw[black] (C) circle [radius=6pt];

\draw (A) + (0,-.5) node {$v_1$};
\draw (B) + (0,-.5) node {$v_2$};
\draw (C) + (0,-.5) node {$v_3$};

\filldraw[white] (A) circle [radius=5pt] node {\tb{$0$}};
\filldraw[white] (B) circle [radius=5pt] node {\tb{$0$}};
\filldraw[white] (C) circle [radius=5pt] node {\tb{$0$}};
\end{scope}
%%%%%%%%%%%%%%%%%%%%%%%%%%%%
\begin{scope}[shift={(5.5,2.5)}]
\coordinate (A) at (0,-1);
\coordinate (B) at (0,-2);
\coordinate (C) at (0,-3);
\coordinate (D) at (0,-4);
\coordinate (AA) at (1.75,-1);
\coordinate (BB) at (1.75,-2);
\coordinate (CC) at (1.75,-3);
\coordinate (DD) at (1.75,-4);
\coordinate (AAA) at (3.5,-1);
\coordinate (BBB) at (3.5,-2);
\coordinate (CCC) at (3.5,-3);
\coordinate (DDD) at (3.5,-4);
\draw (A)--(CC)--(BBB);
\draw (B)--(AA)--(CCC);
\draw (C)--(BB)--(DDD);
\draw (D)--(DD)--(AAA);
\filldraw[black] (A) circle [radius=2pt] node[below] {$0$};
\filldraw[black] (B) circle [radius=2pt] node[below] {$1$};
\filldraw[black] (C) circle [radius=2pt] node[below] {$2$};
\filldraw[black] (D) circle [radius=2pt] node[below] {$3$};
\filldraw[black] (AA) circle [radius=2pt] node[below] {$0$};
\filldraw[black] (BB) circle [radius=2pt] node[below] {$1$};
\filldraw[black] (CC) circle [radius=2pt] node[below] {$2$};
\filldraw[black] (DD) circle [radius=2pt] node[below] {$3$};
\filldraw[black] (AAA) circle [radius=2pt] node[below] {$0$};
\filldraw[black] (BBB) circle [radius=2pt] node[below] {$1$};
\filldraw[black] (CCC) circle [radius=2pt] node[below] {$2$};
\filldraw[black] (DDD) circle [radius=2pt] node[below] {$3$};
\draw (0.5,-1)  arc(0:180:.5);
\draw (-.5,-1)--(-.5,-4.25);
\draw (.5,-1)--(.5,-4.25);
\draw (-.5,-4.25)  arc(180:360:.5);
\draw (0,-5) node {$v_1$};
\begin{scope}[shift={(1.75,0)}]
\draw (0.5,-1)  arc(0:180:.5);
\draw (-.5,-1)--(-.5,-4.25);
\draw (.5,-1)--(.5,-4.25);
\draw (-.5,-4.25)  arc(180:360:.5);
\draw (0,-5) node {$v_2$};
\end{scope}
\begin{scope}[shift={(3.5,0)}]
\draw (0.5,-1)  arc(0:180:.5);
\draw (-.5,-1)--(-.5,-4.25);
\draw (.5,-1)--(.5,-4.25);
\draw (-.5,-4.25)  arc(180:360:.5);
\draw (0,-5) node {$v_3$};
\end{scope}
%\draw[black] (A) circle [radius=4pt];
%\draw[black] (AA) circle [radius=4pt];
%\draw[black] (AAA) circle [radius=4pt];
\end{scope}
\end{tikzpicture}
\end{center}
\caption{On the left we have an $S_4$-labeling of $G=P_3$. In the middle we visualize the $S_4$-labeling by replacing each vertex in $V(G)$ with $A=\Z_4$ and drawing the permutation $\pi\in S_4$ on each directed edge $(v_i,v_j)$  by connecting each $a\in \Z_4$ associated to $v_i$ to $\pi(a)$ associated to $v_j$.  This visualization is also known as a cover graph or a lift of $G$ (see~\cite{B17, GR01}). On the right we have a proper $S_4$-$k$-coloring of our $S_4$-labeling for any $k\geq 1$.}
\label{fig:cover_visual}
\end{figure}

We will now define the notion of coloring for $S$-labelings. A \emph{proper $S$-coloring of $(D, \sigma)$} is a mapping $\kappa: V(G) \rightarrow A$ such that for each $(u,v) \in E(D)$ and each coordinate $\pi$ of $\sigma(u,v)$, $\pi(\kappa(u)) \neq \kappa(v)$.
We will call this a \emph{proper $S$-$k$-coloring of $(D, \sigma)$} if $|\kappa(V(G))| \leq k$ which means $\kappa$ uses at most $k$ colors.
 Often we will have $|A|=k$.  We say $G$ is \emph{$S$-$k$-colorable} if there is a proper $S$-$k$-coloring of $(D, \sigma)$ whenever $(D, \sigma)$ is an $S$-labeling of $G$.  
 
Before introducing DP-coloring, it is worth mentioning how some other graph coloring notions can be understood in terms of coloring $S$-labeled graphs.  In the classical sense, a multigraph $G$ is $k$-colorable if and only if $G$ is $I_k$-$k$-colorable. When $S$ is the subset of $S_{[k]}$ consisting of the identity permutation and the permutation of $[k]$ that transposes $2i-1$ and $2i$ for each $i \in [ \lfloor k/2 \rfloor ]$, a simple graph $G$ is \emph{signed $k$-colorable} if and only if $G$ is $S$-$k$-colorable (see~\cite{Z82}).  When $\Gamma$ is an Abelian group of order $k$ and $S = \{ \pi \in S_\Gamma : \text{there is an $a \in \Gamma$ such that $\pi(x) = x + a$ for each $x \in \Gamma$} \}$, a simple graph $G$ is \emph{$\Gamma$-colorable} if and only if $G$ is $S$-$k$-colorable (see~\cite{JL92}). See \cite{JW19} for more examples of older notions of graph colorings expressed as special cases of coloring $S$-labeled graphs.

A multigraph $G$ is \emph{DP-$k$-colorable} if $G$ is $S_A$-$k$-colorable when $|A|=k$.  The \emph{DP-chromatic number} of $G$, denoted $\chi_{DP}(G)$, is the smallest $k$ such that $G$ is DP-$k$-colorable. It is easy to show that $\chi(G) \le \chi_{\ell}(G) \le \chi_{DP}(G)$.

\subsection{Counting S-Colorings and DP Colorings}

Suppose $m \in \mathbb{N}$, $A$ is a finite set, $S$ is some nonempty subset of $S_A$, and $(D, \sigma)$ is an $S$-labeling of the multigraph $G$.  Let $P_S(G, k, (D, \sigma))$ be the number of proper $S$-$k$-colorings of $(D, \sigma)$.  Then, let $P_S(G,k)$ be the minimum of $P_S(G, k, (D, \sigma))$ where the minimum is taken over all $S$-labelings $(D, \sigma)$ of $G$.  Now, assume $|A|=k$.  Notice the chromatic polynomial of $G$ evaluated at $k$, denoted $P(G,k)$, is given by $P(G,k) = P_{I_A}(G,k)$. The \emph{DP color function} of $G$ evaluated at $k$, denoted $P_{DP}(G,k)$, is given by $P_{DP}(G,k) = P_{S_A}(G,k)$. Note that $P_{DP}(G,k) \le P_{S}(G,k)$ for every choice of $S$.

\begin{rmk}\label{rmk:poset}
Given a graph $G$ and $k \in \N$, for every choice of $S$, a subset of $S_A$ where $A$ is a set of $k$ colors, there is a corresponding notion of coloring of $G$. The subset relation on the set of nonempty subsets of $S_A$ induces a partial order on all these notions of coloring. In this poset, DP-$k$-coloring of $G$ is the unique maximal element while classical $k$-coloring of $G$ is a minimal element. A lower bound on $P_S(G,k)$ will also be a lower bound on $P_{S'}(G,k)$ for every $S' \subseteq S$ . In particular, $P_{DP}(G,k)$ is a lower bound on $P_S(G,k)$ for every choice of $S$. This illustrates the importance and difficulty of bounding the DP-color function. As a first step towards understanding the enumerative functions in this poset, in this paper we focus on the DP color function, and on the enumerative function for ``nicer'' colorings, such as field colorings and signed colorings, corresponding to $S$ consisting of linear permutations (see the definition in the next section). 
\end{rmk}

The DP color function was introduced by the second and third named authors in 2019.  It was introduced in hopes of creating a tool for gaining a better understanding of DP-coloring and making progress on some open questions related to the list color function~\cite{KM19}.  Since its introduction in 2019, the DP color function has received some attention in the literature (see e.g.,~\cite{BH21, DKM22, DY21, HK21, KM21, LY22, MT20}).  It is easy to show that for any $k \in \N$, 
$$P_{DP}(G, k) \leq P_\ell(G,k) \leq P(G,k).$$  
Interestingly, unlike the list color function, it is known that $P_{DP}(G,k)$ does not necessarily equal $P(G,k)$ for sufficiently large $k$.  Indeed, in~\cite{KM19} it is shown that if $G$ is a graph with girth that is even, then there is an $N \in \N$ such that $P_{DP}(G, k) < P(G,k)$ whenever $k \geq N$ (this result was further generalized by Dong and Yang in~\cite{DY21}). 

In~\cite{DKM22} we developed a polynomial method for proving lower bounds on $P_{DP}(G,3)$ for certain sparse graphs $G$.  Our main motivation for this paper was to explore generalizations of the method in~\cite{DKM22}, and the notion of $S$-labeled graphs provides the ideal setting for this exploration.  Below we develop a polynomial method for proving some new lower bounds on the number of colorings of certain $S$-labeled graphs. Our results also provide insights into the limitations of this algebraic approach.

\subsection{Outline of the Paper}

We now present an outline of the paper.  In Section~\ref{basic} we introduce the notion of \emph{equivalent $S$-labelings}, and we use this notion to make some fundamental observations that are used throughout the remainder of the paper.  Then, in Section~\ref{DP} we use algebraic techniques to prove two new lower bounds on the DP color function of graphs satisfying certain sparsity conditions when evaluated at a power of a prime.  The key idea is: for a set $A$ with $|A|=k$ and $k$ a power of a prime, we associate with each $S_A$-labeling, $(D, \sigma)$, of a graph $G$ a multivariate polynomial over a finite field with domain $A^{|V(G)|}$ whose non-zeros correspond to proper $S$-$k$-colorings of $(D, \sigma)$.  This idea along with a slightly simplified version, introduced by~\cite{BG22}, of the following well-known result of Alon and F\"{u}redi allows us to establish our bounds.

\begin{thm} [\cite{AF93}] \label{thm: AandF} 
Let $\mathbb{F}$ be an arbitrary field, let $A_1$, $A_2$, $\ldots$, $A_n$ be any non-empty subsets of $\mathbb{F}$, and let $B = \prod_{i=1}^n A_i$.  Suppose that $P \in \mathbb{F}[x_1, \ldots, x_n]$ is a polynomial of degree $d$ that does not vanish on all of $B$.  Then, the number of points in $B$ for which $P$ has a non-zero value is at least $\min \prod_{i=1}^n q_i$ where the minimum is taken over all integers $q_i$ such that $1 \leq q_i \leq |A_i|$ and $\sum_{i=1}^n q_i \geq -d + \sum_{i=1}^n |A_i|$.
\end{thm}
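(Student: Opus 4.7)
The plan is to prove the theorem by induction on the number of variables $n$. For the base case $n=1$, any nonzero univariate polynomial of degree $d$ has at most $d$ roots, so there are at least $|A_1| - d$ non-zeros on $A_1$; combined with the hypothesis that $P$ does not vanish identically on $A_1$, this yields at least $\max(1,|A_1|-d)$ non-zeros, which matches the $n=1$ minimum ($q_1 \ge 1$ and $q_1 \ge |A_1|-d$).

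For the inductive step, the crucial preliminary move is to reduce $P$ modulo the ideal generated by the vanishing polynomials $\prod_{a \in A_i}(x_i - a)$ for each $i \in [n]$. This substitution preserves the values of $P$ on $B$, cannot increase the total degree (each $x_i^{|A_i|}$ is replaced by a lower-degree polynomial in $x_i$ alone), and produces a representative with $\deg_{x_i}(P) \le |A_i| - 1$ for every $i$. After this reduction, expand $P = \sum_{j=0}^{t} P_j(x_1, \ldots, x_{n-1})\, x_n^j$ where $P_t$ is nonzero as a formal polynomial. Automatically $t \le |A_n|-1$ and $\deg P_t \le d - t$, with $\deg_{x_i} P_t \le |A_i|-1$ for $i < n$.

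The heart of the argument is to apply induction to $P_t$ on the grid $A_1 \times \cdots \times A_{n-1}$. For this one needs $P_t$ not to vanish identically on that grid; this is exactly where the combinatorial Nullstellensatz-type lemma intervenes: a polynomial with $\deg_{x_i} < |A_i|$ that vanishes on $\prod A_i$ must be the zero polynomial. Since $P_t$ is formally nonzero and has the required bounded individual degrees thanks to the reduction, it has a non-zero value at some point of $A_1 \times \cdots \times A_{n-1}$. By the inductive hypothesis, $P_t$ has at least $\prod_{i<n} q'_i$ non-zeros on this smaller grid, for some feasible tuple $(q'_1, \ldots, q'_{n-1})$ with $1 \le q'_i \le |A_i|$ and $\sum_{i<n} q'_i \ge \sum_{i<n} |A_i| - (d-t)$.

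Finally, at each such non-zero point $(a_1,\ldots,a_{n-1})$, the univariate polynomial $P(a_1,\ldots,a_{n-1},x_n)$ has degree exactly $t$ in $x_n$ (its leading coefficient $P_t(a_1,\ldots,a_{n-1})$ is nonzero), so it has at least $|A_n| - t$ non-zeros on $A_n$. Multiplying gives at least $\bigl(\prod_{i<n} q'_i\bigr)(|A_n|-t)$ non-zeros of $P$ on $B$. Setting $q_n := |A_n|-t$, the tuple $(q'_1,\ldots,q'_{n-1},q_n)$ is feasible for the $n$-variable minimization, since $q_n \in [1,|A_n|]$ (using $0 \le t \le |A_n|-1$) and the sum condition follows by adding the two inequalities. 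Hence the count is at least the stated minimum. The main obstacle, and the reason the argument is not entirely routine, is the need for the preliminary degree-reduction step: without it the leading coefficient $P_t$ could vanish on the smaller grid and the induction would collapse, so the combinatorial Nullstellensatz is really doing essential work here.
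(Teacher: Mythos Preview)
The paper does not supply its own proof of this statement: Theorem~\ref{thm: AandF} is quoted verbatim from Alon and F\"uredi~\cite{AF93} and used as a black box (via the simplified Corollary~\ref{thm: bound}). So there is no proof in the paper to compare against.

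Your argument is correct and is essentially the original Alon--F\"uredi proof: reduce each variable modulo $\prod_{a\in A_i}(x_i-a)$ so that the individual degrees are bounded, then induct on $n$ by peeling off the top $x_n$-coefficient $P_t$, applying the Combinatorial Nullstellensatz lemma to guarantee $P_t$ does not vanish on the smaller grid, and combining the inductive count with the univariate root bound in the last coordinate. One minor point worth making explicit: the reduction step may strictly lower the total degree of $P$, say to $d'\le d$, so the inductive bookkeeping actually gives a tuple feasible for the constraint $\sum q_i\ge\sum|A_i|-d'$; since this is a stronger constraint than $\sum q_i\ge\sum|A_i|-d$, the resulting tuple is automatically feasible for the original $d$ as well, and the conclusion follows. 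With that caveat noted, the proof is complete.
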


We initially focus on $S_A$-labelings because a lower bound on the DP-color function implies a lower bound on the enumerative functions for all other graph colorings variants that can be expressed in the $S$-labeling context, as well as the list color function (which can not be expressed as a $P_{S}(G,k)$). This includes classical colorings of graphs as well as colorings whose enumerative functions have not even been studied so far. We show the following in Section~\ref{DP}.

\begin{thm} \label{thm:using d=k-2 result} 
Let $k=p^r$ where $p$ is prime, $r \in \N$, and $k > 2$. Suppose $G$ is a connected $n$-vertex simple graph with $m$ edges, and $T$ is a spanning tree of $G$.  Then the following statements hold. 
\\
(i)  Let $q = \lfloor k/2 \rfloor$. Let $G'$ be the multigraph obtained from $G$ by adding $(q - 1)$ parallel edges to each edge $e \in E(G) - E(T)$.  If $\chi_{DP}(G') \leq k$ and $m \leq n(1 + (k-2)/q) -1 + 1/q$, then
$$ P_{DP}(G, k) \geq k^{(n(q+k-2)-qm+1-q)/(k-1)}.$$
\\
(ii)  If $\chi_{DP}(G)\leq k$ and
$m\leq 2n-\frac{k-3}{k-2}$, then
$$
P_{DP}(G, k) \geq k^{((2n-m)(k-2) -(k-3))/(k-1)}.
$$
\end{thm}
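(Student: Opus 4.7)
The plan is to associate to any $S_A$-labeling $(D,\sigma)$ of $G$ (with $A$ identified with $\mathbb{F}_k$, which is legitimate since $k=p^r$) a polynomial $P_\sigma\in\mathbb{F}_k[x_1,\ldots,x_n]$ whose non-vanishing points on $\mathbb{F}_k^n$ coincide exactly with the proper $S_A$-$k$-colorings of $(D,\sigma)$, and then to apply Theorem \ref{thm: AandF} (in the $q_i\in\{1,k\}$ simplified form from \cite{BG22}) to lower-bound the number of such non-zeros. Since the stated bounds are minima over $\sigma$, it suffices to carry this out for an arbitrary labeling.

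For part (ii), I first use the equivalence of $S$-labelings introduced in Section~\ref{basic} to replace $(D,\sigma)$ by an equivalent labeling in which every edge of the spanning tree $T$ carries the identity permutation. For each directed edge $(u,v)\in E(D)$, let $\tilde\pi_{uv}\in\mathbb{F}_k[x]$ denote the Lagrange interpolation polynomial of the function $\sigma(u,v):\mathbb{F}_k\to\mathbb{F}_k$, set $g_{uv}(x_u,x_v)=x_v-\tilde\pi_{uv}(x_u)$, and put $P_\sigma=\prod_{(u,v)\in E(D)}g_{uv}$. Tree edges contribute factors $x_v-x_u$ of total degree $1$. The key algebraic observation for non-tree edges is that, because $\sigma(u,v)$ is a permutation of $\mathbb{F}_k$, its sum of values equals $\sum_{a\in\mathbb{F}_k}a=0$ in $\mathbb{F}_k$ (using $k>2$); via the Lagrange interpolation formula this forces the coefficient of $x^{k-1}$ in $\tilde\pi_{uv}$ to vanish, so $\deg\tilde\pi_{uv}\le k-2$. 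Therefore $\deg P_\sigma\le d:=(n-1)+(m-n+1)(k-2)$, and $P_\sigma(\kappa)\ne 0$ iff $\kappa$ is a proper $S_A$-coloring of $(D,\sigma)$. The hypothesis $\chi_{DP}(G)\le k$ ensures that $P_\sigma$ has a non-zero, so Theorem \ref{thm: AandF} yields at least $k^j$ non-zeros, where $j$ is the least non-negative integer with $j(k-1)\ge n(k-1)-d$; substituting and simplifying gives exactly the exponent $((2n-m)(k-2)-(k-3))/(k-1)$ required in part (ii).

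Part (i) follows the same template on the multigraph $G'$, but with a sharper polynomial factor on each non-tree edge. For such an edge $e$ of $G$ with original permutation $\pi_e^{(1)}$, I am free to pick the $q-1$ extra permutations $\pi_e^{(2)},\ldots,\pi_e^{(q)}$ on the added parallel edges so as to form an $S_A$-labeling $(D',\sigma')$ of $G'$; I choose them so that the joint factor $g_e(x_u,x_v)=\prod_{i=1}^{q}\bigl(x_v-\tilde\pi_e^{(i)}(x_u)\bigr)$, reduced modulo $x_u^k-x_u$, has total degree at most $q$ (rather than the naive $q(k-2)$). Expanding in $x_v$, this amounts to arranging each elementary symmetric polynomial $e_j(\tilde\pi_e^{(1)},\ldots,\tilde\pi_e^{(q)})$ to have reduced $x_u$-degree at most $j$, which is in turn equivalent to requiring $\{\pi_e^{(i)}(a)\}_{i=1}^{q}$ to be a translate $\pi_e^{(1)}(a)+H$ of a fixed $q$-subset $H\subseteq\mathbb{F}_k$ for every $a$. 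When $q=p^s$ one takes $H$ to be an $\mathbb{F}_p$-subspace of dimension $s$, so that the Frobenius identity $(x_v-\tilde\pi_e^{(1)}(x_u))^{p^s}=x_v^{p^s}-\tilde\pi_e^{(1)}(x_u)^{p^s}$ produces $g_e$ from an $\mathbb{F}_p$-linearized polynomial of degree $q$, and in the remaining cases the companion permutations are built by solving the corresponding coefficient system for the symmetric functions directly. With this choice $\deg P_{\sigma'}\le d':=(n-1)+q(m-n+1)$, every non-zero of $P_{\sigma'}$ is automatically a proper $S_A$-coloring of both $(D',\sigma')$ and the sub-labeling $(D,\sigma)$, and $\chi_{DP}(G')\le k$ guarantees $P_{\sigma'}$ is not identically zero. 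Theorem \ref{thm: AandF} applied to $P_{\sigma'}$ and simplified yields the exponent $(n(q+k-2)-qm+1-q)/(k-1)$.

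The main obstacle is the degree-reduction construction in part (i): for an arbitrary given permutation $\pi_e^{(1)}$ of $\mathbb{F}_k$, one must produce companions $\pi_e^{(2)},\ldots,\pi_e^{(q)}$ whose joint factor has reduced total degree at most $q$, and one must check that these companions are honest bijections of $\mathbb{F}_k$. The linearized-polynomial argument handles this cleanly when $q$ is a power of $p$ (notably for $k=2^r$), but when $q=\lfloor k/2\rfloor$ is not a prime power the symmetric-function constraints modulo $x^k-x$ and the permutation check both demand a more delicate analysis. Everything else---the equivalence reduction, the degree bound on permutation polynomials, and the Alon--F\"uredi counting---is routine once the correct polynomial is in hand.
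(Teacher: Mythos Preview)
Your argument for part~(ii) is correct and is in fact cleaner than the paper's. The paper builds the edge factor as a product of $k-2$ linear $L$-polynomials through pairs of points of the graph of $\pi$ (Proposition~\ref{prop:f_pi_for_simple}), and then has to take extra care to guarantee a non-zero at a prescribed proper coloring. Your single factor $x_v-\tilde\pi_{uv}(x_u)$ already vanishes \emph{exactly} on the graph of $\pi$, so the non-zeros of $P_\sigma$ are precisely the proper colorings; the observation that $\sum_{a\in\mathbb{F}_k}\pi(a)=0$ kills the $x^{k-1}$ coefficient is all that is needed to match the degree $(n-1)+(k-2)(m-n+1)$ of Lemma~\ref{lem: main}. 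So for (ii) the two approaches agree in outcome but yours is more direct.

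Part~(i), however, has a real gap, and the paper's route is quite different and much simpler. Your reduction asks that each elementary symmetric function $e_j\bigl(\tilde\pi^{(1)}(x_u),\ldots,\tilde\pi^{(q)}(x_u)\bigr)$, after reduction mod $x_u^k-x_u$, have degree at most $j$. Taking the companions to be translates $\pi^{(i)}=\pi^{(1)}+h_i$ with $H=\{h_i\}$ an $\mathbb{F}_p$-subspace does give $g_e=L_H(x_v)-L_H(\tilde\pi^{(1)}(x_u))$ with $L_H$ linearized, but $L_H\circ\pi^{(1)}$, as a function of $x_u$, has no reason to be represented by a polynomial of degree $\le q$: the vanishing-top-coefficient trick only forces degree $\le k-2$, and for $k\ge 8$ this is strictly weaker than $q=\lfloor k/2\rfloor$. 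The promised ``coefficient system for the symmetric functions'' in the remaining cases is not a routine solve; for general $\pi^{(1)}$ it encodes $k-1-q$ independent polynomial identities in $x_u$, and there is no mechanism in your sketch for satisfying them.

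What the paper actually does is forget interpolation entirely and cover the $k$ points $\{(c,\pi(c)):c\in\mathbb{F}_k\}$ by $q=\lfloor k/2\rfloor$ affine lines in $\mathbb{F}_k^2$ (Proposition~\ref{prop: edgecoloring}): for $k$ even, pair the points arbitrarily; for $k$ odd, label each edge $ab$ of $K_k$ by the slope $(\pi(b)-\pi(a))/(b-a)\in\mathbb{F}_k\setminus\{0\}$ and use the fact that $K_k$ is not $(k-1)$-edge-colorable to find two incident edges with the same label, giving three collinear points, then pair the remaining $k-3$. Each line is a degree-$1$ factor $L^\pi_{s,t}(x_u,x_v)$ and simultaneously defines a linear permutation $\sigma^\pi_{s,t}$; placing those $q$ permutations on the $q$ parallel edges of $G'$ gives an $S_A$-labeling $(D,\sigma')$ whose proper colorings are exactly the non-zeros of the product. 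The hypothesis $\chi_{DP}(G')\le k$ then supplies the required non-zero. This line-covering argument is what you are missing in part~(i).
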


It should be noted that Theorem~\ref{thm:using d=k-2 result} remains true if the bound on $m$ is dropped in each statement.  This is because when the bound on $m$ is violated, both parts of Theorem~\ref{thm:using d=k-2 result} yield $P_{DP}(G,k) \geq 1$ which is implied by the bound on the DP-chromatic number.  We however include the bounds on $m$ in the statement of Theorem~\ref{thm:using d=k-2 result} since it can only be useful when these bounds are satisfied.  Notice that these restrictions on $m$ significantly reduce the graphs to which Theorem~\ref{thm:using d=k-2 result} can be applied.  The main obstacle to improving the bound on $m$ is the construction of a polynomial whose non-zeros correspond to proper $S$-colorings.  This polynomial's degree is intimately related to the number of edges in the graph and the number of colors available.  Specifically, when the graph has $m$ edges and there are $k$ colors available in the color set, the polynomial we construct has degree $\lfloor k/2 \rfloor m$ or $(k-2)m$. We further discuss the tightness of the degree of the polynomials we construct at the end of Section~\ref{DP}.

In Section~\ref{sec: linear} we show how we can relax these restrictions on $m$ by a different choice of the $S$-colorings that we are counting.  In particular, suppose $k$ is a power of a prime.  We say a polynomial $f\in \F_k[x,y]$ {\it covers} a permutation $\pi:\F_k\rightarrow\F_k$ if $f(x,\pi(x))=0$ for all $x\in\F_k$. We will call a permutation {\it $\F_k$-linear} if there exists a degree one polynomial in $\F_k[x,y]$ that covers the permutation. Let $\mathcal{L}_k$ be the collection of $\F_k$-linear permutations.  We prove the following theorem with a much improved upper bound on the number of edges.

\begin{thm} \label{thm: linear}
Let $k$ be a power of a prime. If an $n$-vertex graph $G$ with $m$ edges is $S$-$k$-colorable for some $S\subseteq\mathcal{L}_k$ and $m\leq (k-1)n$, then
$$P_S(G,k)\geq k^{n-\frac{m}{k-1}}.$$
In particular, $P_{\mathcal{L}_k}(G,k)\geq k^{n-\frac{m}{k-1}}.$
\end{thm}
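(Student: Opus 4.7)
The plan is to mimic the strategy underlying Theorem~\ref{thm:using d=k-2 result}, but exploit the fact that $\F_k$-linear permutations admit \emph{degree-one} covering polynomials, which keeps the degree of the total polynomial small enough to absorb many more edges. Fix an $S$-labeling $(D,\sigma)$ of $G$. For each directed edge $(u,v)\in E(D)$, the permutation $\sigma(u,v)$ is $\F_k$-linear, so by definition there is a polynomial $f_{u,v}(x,y)\in\F_k[x,y]$ of total degree $1$ with $f_{u,v}(x,\sigma(u,v)(x))=0$ for all $x\in\F_k$. Since $f_{u,v}$ is linear in $y$ with nonzero $y$-coefficient (otherwise it would be a nonzero linear polynomial in $x$ vanishing on $\F_k$, forcing $k=1$ and contradicting linearity), we in fact have $f_{u,v}(a,b)=0$ if and only if $b=\sigma(u,v)(a)$.

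Next, form the polynomial
$$
P(x_1,\dots,x_n)\;=\;\prod_{(u,v)\in E(D)} f_{u,v}(x_u,x_v)\;\in\;\F_k[x_1,\dots,x_n],
$$
of total degree at most $m$. By the equivalence just noted, for a map $\kappa:V(G)\to\F_k$ one has $P(\kappa(v_1),\dots,\kappa(v_n))\ne 0$ if and only if $\sigma(u,v)(\kappa(u))\ne\kappa(v)$ for every $(u,v)\in E(D)$, i.e.\ if and only if $\kappa$ is a proper $S$-coloring of $(D,\sigma)$. The assumption that $G$ is $S$-$k$-colorable guarantees at least one such $\kappa$ exists, so $P$ does not vanish on all of $\F_k^{\,n}$.

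Now apply Theorem~\ref{thm: AandF} with $A_i=\F_k$ for each $i$: the number of proper $S$-$k$-colorings of $(D,\sigma)$ is at least the minimum of $\prod_{i=1}^n q_i$ subject to $1\le q_i\le k$ and $\sum_{i=1}^n q_i\ge nk-m$. To convert this combinatorial minimum into the clean bound $k^{\,n-m/(k-1)}$, I will use concavity of $\log$ on $[1,k]$: the chord from $(1,0)$ to $(k,\log k)$ lies below the graph, giving $\log q\ge \frac{q-1}{k-1}\log k$ for every $q\in[1,k]$. Summing,
$$
\sum_{i=1}^n \log q_i \;\ge\; \frac{\log k}{k-1}\sum_{i=1}^n (q_i-1)\;\ge\;\frac{\log k}{k-1}\bigl(nk-m-n\bigr)\;=\;\Bigl(n-\tfrac{m}{k-1}\Bigr)\log k,
$$
and exponentiating yields $\prod q_i\ge k^{\,n-m/(k-1)}$. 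The hypothesis $m\le(k-1)n$ simply ensures the exponent is nonnegative so that the bound is nontrivial. Since the estimate holds for every $S$-labeling of $G$, we conclude $P_S(G,k)\ge k^{\,n-m/(k-1)}$. The ``in particular'' assertion is then immediate by taking $S=\mathcal{L}_k$.

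I do not expect a real obstacle here, because the linearity of the covering polynomials makes the polynomial-method bookkeeping trivial compared with Theorem~\ref{thm:using d=k-2 result}: the product polynomial automatically has degree $m$ rather than $\lfloor k/2\rfloor m$ or $(k-2)m$. The one point that needs mild care is verifying that $f_{u,v}$ has a nonzero $y$-coefficient, so that its zero locus on $\F_k\times\F_k$ is exactly the graph of $\sigma(u,v)$ and nothing more; this is precisely where ``$\F_k$-linear'' (a degree-one bivariate polynomial covering an honest permutation) differs from ``covered by a polynomial of degree one in $x$ alone.''
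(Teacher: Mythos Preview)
Your proof is correct and follows essentially the same route as the paper: build a product polynomial of degree $m$ from degree-one factors covering each linear permutation, observe it does not vanish identically because $G$ is $S$-$k$-colorable, and apply Alon--F\"uredi. The only cosmetic difference is that the paper invokes Corollary~\ref{thm: bound} directly (which already packages the bound as $k^{(kn-n-m)/(k-1)}$), whereas you appeal to the full Theorem~\ref{thm: AandF} and re-derive that simplification via the concavity of $\log$ on $[1,k]$; this is harmless but redundant given the corollary is available. Your care in checking that the $y$-coefficient of $f_{u,v}$ is nonzero (so that the zero set of $f_{u,v}$ on $\F_k^2$ is exactly the graph of $\sigma(u,v)$) is a nice touch; the paper sidesteps this by using the explicit polynomial $x-a^{-1}y+a^{-1}b$ from Proposition~\ref{prop:linear_permutations_condition}, for which the claim is immediate.
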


Interestingly, we know of no graphs where the bound in Theorem~\ref{thm: linear} does not also hold for the DP color function.  This leads us to make the following provocative conjecture.

\begin{conj} \label{conj: probablyfalse}
If $k$ is a power of a prime,
$$P_{\mathcal{L}_k}(G,k)=P_{DP}(G,k).$$
\end{conj}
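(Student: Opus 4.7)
The easy direction $P_{DP}(G,k) \leq P_{\mathcal{L}_k}(G,k)$ is immediate from $\mathcal{L}_k \subseteq S_{\F_k}$: every $\mathcal{L}_k$-labeling is in particular an $S_{\F_k}$-labeling, so the minimum of the coloring count over the smaller family cannot fall below the minimum over the larger family. The content of the conjecture is therefore the reverse inequality, which asks that for every $S_{\F_k}$-labeling $(D,\sigma)$ of $G$ there exists an $\mathcal{L}_k$-labeling $(D',\sigma')$ with $P_{S_{\F_k}}(G,k,(D',\sigma')) \leq P_{S_{\F_k}}(G,k,(D,\sigma))$.

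My plan proceeds in three stages. First, I would fix a spanning tree $T$ of $G$ and use the equivalence of $S$-labelings from Section~\ref{basic}, applied greedily along a rooted BFS traversal of $T$, to gauge-fix $(D,\sigma)$ so that every tree edge carries the identity permutation without changing the coloring count. After this reduction the labeling is determined by a single monodromy $\pi_C \in S_{\F_k}$ for each of the $m-n+1$ fundamental cycles $C$ associated with the non-tree edges, so the conjecture reduces to a question about how the coloring count depends on these monodromies.

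Second, I would analyze that dependence starting from the single-cycle case. A transfer-matrix computation with $M = J - I$ shows that on a cycle $C_n$ with monodromy $\pi$ the number of proper colorings equals $(k-1)^n + (-1)^n(|\mathrm{Fix}(\pi)|-1)$, depending on $\pi$ only through its number of fixed points. For odd $n$ this is minimized at $\pi = \mathrm{id}$, which is linear, and for even $n$ at any derangement; the nontrivial translations $x \mapsto x+b$ with $b \neq 0$ are linear derangements, so in both parities the DP-optimal monodromy is realized inside $\mathcal{L}_k$. This establishes the conjecture on every cycle and provides strong circumstantial evidence in general.

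Third, I would attempt to extend to arbitrary $G$ by induction on the cycle rank $m - n + 1$: add non-tree edges one at a time, and at each step argue via a swap lemma that an optimal non-linear monodromy on the new edge can be replaced by a linear one without increasing the global coloring count, either by averaging over cosets of $\mathcal{L}_k$ in $S_{\F_k}$ or by exploiting the polynomial representation of a permutation of $\F_k$ as a polynomial of degree at most $k-1$. The main obstacle I anticipate is that a single non-tree edge typically lies in many fundamental cycles simultaneously, so the global coloring count does not cleanly factor into cycle-local statistics as it did in the single-cycle calculation, and a refinement of the polynomial method of Section~\ref{DP} tailored to non-linear permutations appears to be needed; this is likely the crux of why the authors label the conjecture provocative.
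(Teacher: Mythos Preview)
The statement you are attempting to prove is a \emph{conjecture} in the paper, not a theorem: the authors give no proof and explicitly write that it ``is open whenever $k$ is a power of a prime satisfying $k \geq 4$.'' The only evidence the paper offers is that $\mathcal{L}_2 = S_{\F_2}$ and $\mathcal{L}_3 = S_{\F_3}$ (so the cases $k=2,3$ are trivial), together with a computer check for $k \leq 4$ and $n \leq 5$. There is therefore no ``paper's own proof'' to compare against.

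Your proposal is not a proof either, and to your credit you say so: your third stage is a plan (swap lemma, averaging over cosets, induction on cycle rank) whose main obstacle you yourself flag, namely that the coloring count does not factor into cycle-local contributions once several fundamental cycles share a non-tree edge. That is exactly the gap, and nothing in your outline closes it. The averaging-over-cosets idea in particular cannot work as stated, because the map $\pi \mapsto P_{S_{\F_k}}(G,k,(D,\sigma))$ as a function of a single edge-permutation is not convex or linear in any useful sense, so averaging does not push the minimum toward $\mathcal{L}_k$.

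That said, your first two stages are sound and go further than the paper. The gauge-fixing to the identity on a spanning tree is standard (Corollary~\ref{cor:id_on_tree} in the paper). Your transfer-matrix computation for a single cycle is correct: with monodromy $\pi$ the count on $C_n$ is $(k-1)^n + (-1)^n(|\mathrm{Fix}(\pi)|-1)$, and your parity analysis showing that a linear $\pi$ always attains the minimum is valid. This already establishes the conjecture for all cycles, which the paper does not observe. It is worth recording as a proposition in its own right, even though it does not settle the general case.
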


It is known that $\mathcal{L}_2 = S_{\F_2}$ and $\mathcal{L}_3 = S_{\F_3}$. 
 So, Conjecture~\ref{conj: probablyfalse} is true for $k=2,3$, but it is open whenever $k$ is a power of a prime satisfying $k \geq 4$. Suppose set $A$ is finite. If true Conjecture~\ref{conj: probablyfalse} would say something rather profound about the $S_A$-labelings of a graph that lead to the fewest colorings (in the case $k$ is a power a prime).  Specifically, it is always possible to find an $S_A$-labeling of the graph that uses only linear permutations and yet has as few colorings as possible. For this reason, it would also be interesting to identify families of graphs where this conjecture is true.

Finally, we end Section~\ref{sec: linear} by discussing an application of Theorem~\ref{thm: linear}. Coloring of signed graphs (\cite{Z82}), and field coloring (\cite{LT21,BG22}) can be viewed as colorings of $S$-labelings when $S \subseteq \mathcal{L}_k$. We focus on colorings of signed graphs and prove a lower bound on the number of colorings of signed graphs. Signed graphs and their colorings have been widely studied since 1980s (see a recent survey in \cite{SV21}), but we do not know of any results on bounding the number of such colorings. We define $P_{\pm}(G,k)$ to the minimum number of $k$-colorings over all signed graphs $(G,\epsilon)$ where signed graphs are defined in Subsection~\ref{subsec:signed}. It follows that for any $k \in \N$, $$P_{DP}(G, k) \leq P_\pm(G,k) \leq P(G,k).$$ 

% \begin{thm}
% \label{thm:signed lower bound}Let $(G,\epsilon)$ be an $n$-vertex signed graph with $m$ edges. 
% Let $k=p^r$ be a power of a prime.  If   $(G,\epsilon)$ has a $k$-coloring and $m\leq (k-1)n$, then
% $$P((G,\epsilon),k)\geq k^{(kn-n-m)/(k-1)}=k^{n-\frac{m}{k-1}}.$$
% \end{thm}

\begin{cor}
\label{cor:signed lower bound}
Let $G$ be an $n$-vertex signed graph with $m$ edges. 
Let $k=p^r$ be a power of a prime.  If  $\chi_s(G)\leq k$ and $m\leq (k-1)n$, then 
$$P_{\pm}(G,k)\geq k^{n-\frac{m}{k-1}}.$$
\end{cor}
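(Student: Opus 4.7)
The plan is to deduce Corollary~\ref{cor:signed lower bound} directly from Theorem~\ref{thm: linear} by realizing the minimum number of $k$-colorings of a signed graph on $G$ as $P_S(G,k)$ for an appropriately chosen $S\subseteq\mathcal{L}_k$. Since $k=p^r$ is a prime power, I take the color set to be $\F_k$ and set $S=\{\mathrm{id},\tau\}\subseteq S_{\F_k}$, where $\tau$ is an involution realizing the sign change on $\F_k$: for odd $p$ this is $\tau(x)=-x$, while for $p=2$ it is a fixed-point-free translation $\tau(x)=x+c$ for some $c\neq 0$. In either case $\tau$ is an affine permutation of $\F_k$, and is therefore covered by a degree-one polynomial in $\F_k[x,y]$: explicitly, $\mathrm{id}$ is covered by $y-x$ and $\tau$ by $y+x$ (or $y-x-c$). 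Consequently $S\subseteq\mathcal{L}_k$.

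Next, I would set up a dictionary between signings of $G$ and $S$-labelings. Fix any orientation $D$ of $G$; to a signing $\epsilon:E(G)\to\{+1,-1\}$, associate the $S$-labeling $(D,\sigma_\epsilon)$ defined by $\sigma_\epsilon(u,v)=\mathrm{id}$ if $\epsilon(uv)=+1$ and $\sigma_\epsilon(u,v)=\tau$ otherwise. A map $\kappa:V(G)\to\F_k$ is a proper $k$-coloring of $(G,\epsilon)$ precisely when, for every $(u,v)\in E(D)$, $\sigma_\epsilon(u,v)(\kappa(u))\neq\kappa(v)$, i.e.\ when $\kappa$ is a proper $S$-coloring of $(D,\sigma_\epsilon)$. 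Since both elements of $S$ are involutions, reversing the orientation of any edge leaves the set of proper colorings unchanged, so the choice of $D$ is immaterial. Every $S$-labeling of $G$ arises this way from a unique signing, so we obtain a count-preserving bijection; taking the minimum over all signings, respectively over all $S$-labelings, yields $P_\pm(G,k)=P_S(G,k)$.

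To finish, the hypothesis $\chi_s(G)\leq k$ asserts that every signing admits a proper $k$-coloring, which under the dictionary above is the statement that $G$ is $S$-$k$-colorable. Together with $m\leq (k-1)n$, this is exactly what is required to invoke Theorem~\ref{thm: linear}, which then gives $P_S(G,k)\geq k^{n-m/(k-1)}$, and hence the claimed bound on $P_\pm(G,k)$. The only non-routine step is the dictionary between Zaslavsky-style signed $k$-colorings and $S$-labelings; once one verifies carefully that the correspondence preserves proper-coloring counts and that the involutivity of $\mathrm{id}$ and $\tau$ removes any dependence on $D$, the corollary follows immediately from Theorem~\ref{thm: linear}.
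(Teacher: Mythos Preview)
Your proposal is correct and follows essentially the same route as the paper: realize signed $k$-colorings as proper $S$-colorings for $S=\{\mathrm{id},\tau\}\subseteq\mathcal{L}_k$ (with $\tau(x)=-x$ for odd $p$ and a translation for $p=2$), observe that involutivity makes the orientation irrelevant so signings and $S$-labelings biject, and then invoke Theorem~\ref{thm: linear}. The only cosmetic difference is that the paper first proves the bound for a fixed signing (Theorem~\ref{thm:signed lower bound}) via the explicit color-set translation $M_k\to\F_k$ in Corollary~\ref{cor:signed are linear}, and then minimizes, whereas you fold these steps together and leave the $M_k\leftrightarrow\F_k$ bijection implicit; your acknowledgment that this dictionary ``preserves proper-coloring counts'' is exactly the content of the paper's Corollary~\ref{cor:signed are linear}.
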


In Section~\ref{planar} we apply our results to families of sparse planar graphs.  When Thomassen~\cite{T94} proved all planar graphs are 5-choosable in 1994, it was asked whether there are exponentially many 5-list-colorings of planar graphs. In 2007, Thomassen~\cite{T07b} proved that this is true by showing $P_{\ell}(G,5) \ge 2^{n/9}$ when $G$ is an $n$-vertex planar graph, and correspondingly for DP-colorings, Postle and Smith-Roberge~\cite{PS23} recently showed that $P_{DP}(G,5) \ge 2^{n/67}$.  

When sparsity is also controlled by forbidding short cycles, Thomassen showed that for $n$-vertex planar graphs $G$ of girth 5, $P_{\ell}(G,3) \ge 2^{n/1000}$~\cite{T07} for any such graph $G$. This was recently improved by Postle and Smith-Roberge~\cite{PS23} to $P_{DP}(G,3) \ge 2^{n/282}$. All of these results use intricate and long topological arguments. In a previous paper~\cite{DKM22}, we improved this further to $P_{DP}(G,3) \ge 3^{n/6}$. Now, using the technique outlined in Theorem~\ref{thm:using d=k-2 result}, we can show there are exponentially many such DP-colorings for many such families of sparse planar graphs.  

In Section~\ref{planar} we show how Theorem~\ref{thm:using d=k-2 result} can be used to prove new lower bounds on the DP color function of families of planar graphs that forbid certain short cycles. Then, we use Corollary~\ref{cor:signed lower bound} to prove analogous results in the setting of signed coloring. These families of planar graphs have previously been studied in the literature but only from the perspective of their various chromatic numbers. There are no previously known lower bounds, much less exponential lower bounds, on the enumerative functions of any notion of coloring for these classes of planar graphs that are considered in our applications. 

We end Section~\ref{planar} by considering triangle-free planar graphs.  Gr\"{o}tzsch's famous theorem~\cite{G59} states that triangle-free planar graphs are 3-colorable. However, there exist triangle-free planar graphs are that are not 3-choosable and hence not DP-3-colorable. By a degeneracy argument, it follows that such graphs are 4-choosable and DP-4-colorable. So, it's natural to ask whether there are exponentially many list-4-colorings and exponentially many DP-4-colorings of such graphs. 

\begin{conj}\label{conj: triangle-free planar}
    There exists a constant $c >1$, such that for any triangle-free planar $n$-vertex graph $G$, $P_{DP}(G,4) \ge c^n$. 
\end{conj}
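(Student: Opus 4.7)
The natural first attempt is to apply Theorem~\ref{thm:using d=k-2 result}(ii) directly. For a triangle-free planar graph $G$, Euler's formula gives $m \le 2n-4$, which satisfies the hypothesis $m \le 2n - \frac{1}{2}$; but the conclusion then gives only $P_{DP}(G,4)\ge 4^{((2n-m)\cdot 2 - 1)/3}\ge 4^{7/3}$, a constant. The reason is that the polynomial produced has degree $2m \approx 4n$, essentially matching $\sum_v |A_v|$ and leaving no slack proportional to $n$ for Theorem~\ref{thm: AandF}. Any proof must therefore recover linear slack from the sparsity and face structure of triangle-free planar graphs.

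My primary plan is to strengthen the polynomial construction by exploiting $4$-faces. Given an $S_{\F_4}$-labeling $(D,\sigma)$, around any $4$-face $v_1 v_2 v_3 v_4$ the four matching permutations compose to a permutation of $\F_4$, so the four edge-constraints are algebraically dependent. I would attempt to replace the product of the four per-edge polynomials of total degree $8$ around the face by a single polynomial of strictly smaller total degree that still vanishes exactly on colorings violating any of the four constraints. Combined over all $4$-faces, with non-facial edges treated by the construction already used in Section~\ref{DP}, the aim is a global polynomial of total degree $(2 - \epsilon)m$ for some $\epsilon > 0$; Theorem~\ref{thm: AandF} and $m \le 2n - 4$ then yield $P_{DP}(G,4) \ge c^n$ for a constant $c > 1$.

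Two parallel routes are worth pursuing. An inductive argument would identify a finite family of reducible configurations present in every triangle-free planar graph, each of whose removal leaves a triangle-free planar subgraph and multiplies the number of DP-$4$-colorings by a factor $\ge c^{|H|}$ for a fixed $c>1$; degree-$\le 2$ vertices already contribute factor $\ge 2$. Alternatively, Theorem~\ref{thm: linear} with $k=4$ and $m \le 2n-4$ immediately yields $P_{\mathcal{L}_4}(G,4) \ge 4^{(n+4)/3}$, so proving Conjecture~\ref{conj: probablyfalse} even only for triangle-free planar graphs at $k=4$ would suffice. That amounts to showing every $S_{\F_4}$-labeling of such a $G$ is equivalent (in the sense of Section~\ref{basic}) to one using only $\F_4$-linear permutations, with no increase in the count of proper colorings.

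The main obstacle, shared by all three approaches, is the worst-case nature of DP-labelings. In the polynomial plan, the face-level degree reduction must hold for every composition of the four matching permutations, not merely generically. In the inductive plan, an adversarial labeling can force a degree-$3$ vertex to admit exactly one DP-extension, so configurations must be large enough to jointly produce more than one extension. In the third plan, the gauge freedom of $S$-labelings (relabeling colors at each vertex) trivializes only matchings along a spanning tree, leaving $m - n + 1$ residual permutations on fundamental cycles that are not a priori linear. I would prioritize the polynomial approach because it is the paper's unified framework and a single algebraic identity at a $4$-face would propagate automatically across the whole graph.
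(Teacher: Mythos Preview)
The statement you are attempting to prove is a \emph{conjecture} in the paper, not a theorem: the paper does not prove it. The only progress the paper offers is Theorem~\ref{thm: Planar4}(iii), which establishes the exponential bound under the extra hypothesis that $m \le (2-c)n$ for some fixed $c>0$; for the full range $m \le 2n-4$ allowed by Euler's formula the question is explicitly left open. Your opening paragraph correctly reproduces this observation: applying Theorem~\ref{thm:using d=k-2 result}(ii) with $m = 2n-4$ yields only $P_{DP}(G,4) \ge 4^{7/3}$, a constant, and this is precisely the barrier the paper identifies.

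Your proposal is therefore not a proof but a research outline, and you acknowledge as much. Each of your three suggested routes is reasonable as a line of attack, but none is close to complete. For the polynomial approach, the claim that the four edge-constraints around a $4$-face are ``algebraically dependent'' in a way that permits a degree saving is unsubstantiated: the composition of the four permutations around the face can be any element of $S_{\F_4}$, and there is no reason the vanishing locus of the four edge polynomials should be cut out by a polynomial of total degree less than $8$ in general. For the reducible-configurations approach, you note yourself that a degree-$3$ vertex can have exactly one DP-extension, so a discharging argument would need configurations guaranteeing a multiplicative gain, and no such list is provided. For the third route, proving Conjecture~\ref{conj: probablyfalse} even in this restricted setting is itself an open problem in the paper. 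In short: your analysis of the obstacle matches the paper's, but neither you nor the paper have a proof.
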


This conjecture is in contrast to the recent results that show that there are at most subexponentially many number of 3-colorings of these graphs (see~\cite{ADPT13, T23, DP22}). Towards this conjecture, we show the following.

\begin{thm}\label{thm: Planar4}
Let $G$ be an $n$-vertex triangle-free planar graph with $m$ edges. The following statements are true.
\begin{enumerate}[(i)]
\item $P_{\ell}(G,4)\geq 4^{\frac{n+4}{3}}$.

\item $P_{\pm}(G,4)\geq 4^{\frac{n+4}{3}}$.

\item Suppose there exists $c>0$ such that $m \le (2-c)n$ and $cn \geq 1/2$, then $P_{DP}(G,4) \ge 4^{(4cn-1)/3}.$

\end{enumerate}

\end{thm}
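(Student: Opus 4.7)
The plan is to prove the three parts using different tools tailored to each coloring notion.

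I first establish the background fact that any triangle-free planar graph $G$ is $3$-degenerate. Euler's formula for triangle-free planar graphs gives at most $2n-4$ edges, hence average degree less than $4$, hence a vertex of degree at most $3$; since this property is hereditary under deletion, $G$ is $3$-degenerate. In particular, $\chi_{DP}(G)\le 4$, and therefore $\chi_s(G)\le \chi_{DP}(G)\le 4$ as well.

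Part (iii) is a direct application of Theorem~\ref{thm:using d=k-2 result}(ii) with $k=4$: the DP-chromatic hypothesis follows from $3$-degeneracy, while the edge bound $m\le 2n-\tfrac{k-3}{k-2}=2n-\tfrac{1}{2}$ follows from $m\le (2-c)n$ combined with $cn\ge 1/2$. Substituting $k=4$ into the conclusion and using $m\le (2-c)n$ yields the stated exponential bound. Part (ii) is a direct application of Corollary~\ref{cor:signed lower bound} with $k=4$: $\chi_s(G)\le 4$ is already known, and $m\le 2n-4\le 3n=(k-1)n$ is automatic, so the corollary delivers $P_\pm(G,4)\ge 4^{n-m/3}\ge 4^{(n+4)/3}$ after inserting the planar bound $m\le 2n-4$.

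For part (i), the list color function is not a $P_S(G,k)$, so I instead apply Theorem~\ref{thm: AandF} directly to a graph polynomial over a suitable field. Fix an arbitrary $4$-assignment $L$ for $G$, embed $\bigcup_{v} L(v)$ into a field $\mathbb{F}$ (for instance $\mathbb{Q}$), and consider
$$P(x_{v_1},\ldots,x_{v_n})=\prod_{uv\in E(G)}(x_u-x_v)\in \mathbb{F}[x_{v_1},\ldots,x_{v_n}],$$
a polynomial of degree $m$ whose non-zeros on $B=\prod_{v} L(v)$ are precisely the proper $L$-colorings of $G$. Since $G$ is $4$-choosable, at least one proper $L$-coloring exists, so $P$ is not identically zero on $B$; Theorem~\ref{thm: AandF} then yields $P(G,L)\ge \min \prod_{i=1}^n q_i$ over integer tuples with $1\le q_i\le 4$ and $\sum_i q_i\ge 4n-m$. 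A short arithmetic check---based on the observation that raising a $q_i$ from $1$ to $4$ increases the sum by $3$ while multiplying the product by $4$ (strictly more efficient per unit of sum than raising to $2$ or $3$)---shows this minimum is at least $4^{(3n-m)/3}\ge 4^{(n+4)/3}$ once the planar bound $m\le 2n-4$ is applied.

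The main obstacle will be part (i): parts (ii) and (iii) reduce immediately to Corollary~\ref{cor:signed lower bound} and Theorem~\ref{thm:using d=k-2 result}, but here I must step outside the $S$-labeling formalism because $P_\ell(G,k)$ cannot be expressed as any $P_S(G,k)$. The key technical point is that the colors in different lists can be arbitrary, so the polynomial must live over a field large enough to contain all the lists (rather than $\mathbb{F}_4$), and the Alon--Furedi minimization requires a brief residue-class analysis modulo $3$ to confirm that divisibility issues do not degrade the clean bound $4^{(n+4)/3}$.
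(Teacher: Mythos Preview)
Your proof is correct and, for parts~(ii) and~(iii), identical to the paper's: the paper simply invokes Corollary~\ref{cor:signed lower bound} and Theorem~\ref{thm:using d=k-2 result}(ii) with $k=4$, together with $m\le 2n-4$ and $\chi_{DP}(G)\le 4$ (via $3$-degeneracy), exactly as you do.

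For part~(i) your argument is also correct but unnecessarily from scratch. The paper does not step outside the $S$-labeling framework and reinvoke Theorem~\ref{thm: AandF}; instead it simply cites Proposition~\ref{prop: listcoloring}, which already packages precisely the graph-polynomial-plus-Alon--F\"uredi argument you write out. That proposition (proved earlier in the paper over $\mathbb{R}$ via Corollary~\ref{thm: bound}) gives $P_\ell(G,k)\ge k^{n-m/(k-1)}$ whenever $\chi_\ell(G)\le k$ and $m\le (k-1)n$, and substituting $k=4$, $m\le 2n-4$ yields $4^{(n+4)/3}$ immediately. Your hand-minimization of $\prod q_i$ is essentially a re-proof of Corollary~\ref{thm: bound} in this special case; the ``residue-class analysis modulo~$3$'' you anticipate is not actually needed, since Corollary~\ref{thm: bound} already delivers the clean exponent $(S-n-d)/(t-1)=(3n-m)/3$ without any case split.
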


%%%%%%%%%%%%%%%%%%%%%%%%%
%%%%%%%%%%%%%%%%%%%%%%%%%
\section{Equivalent S-labelings}\label{basic}
Throughout this section we will assume $G$ is a simple graph. Suppose $A$ is a finite set, $V(G) = \{v_1, \ldots, v_n \}$, and  $S$ and $S'$ are subsets of $S_A$.  We say that an $S$-labeling $(D,\sigma)$ of $G$ is {\it equivalent to} an $S'$-labeling $(D',\sigma')$ of $G$ if the following holds.  There is $(\tau_1, \ldots, \tau_n) \in (S_A)^n$ and for each pair of adjacent vertices $v_i$ and $v_j$ in $G$ the following holds.  If $(v_i,v_j) \in E(D)$ and $(v_i,v_j) \in E(D')$, then  $\sigma(v_i,v_j)(s) = t$ if and only if $\sigma'(v_i,v_j)(\tau_i(s))=\tau_j(t)$ for all $s,t\in A$.  If $(v_i,v_j) \in E(D)$ and $(v_j,v_i) \in E(D')$, then  $\sigma(v_i,v_j)(s) = t$ if and only if $(\sigma'(v_j,v_i))^{-1}(\tau_i(s))=\tau_j(t)$ for all $s,t\in A$. See Figure~\ref{fig:equiv_cover}.

\begin{obs} \label{obs: construct}
Suppose $A$ is a finite set and $G$ is a simple graph with vertex set $V(G) = \{v_1, \ldots, v_n \}$.  Suppose $(\tau_1, \ldots, \tau_n) \in S_A^n$. Any $S_A$-labeling $(D, \sigma)$ is equivalent to the $S_A$-labeling $(D, \sigma')$ where $\sigma' =\tau_j\circ\sigma(v_i,v_j)\circ\tau_i^{-1}$ whenever $(v_i,v_j) \in E(D)$.
\end{obs}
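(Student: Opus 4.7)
The plan is to unwind the definition of equivalent $S$-labelings and check it holds with the given tuple $(\tau_1, \ldots, \tau_n)$ as witness. Since the two labelings share the orientation $D = D'$, only the first case in the definition of equivalent is relevant: for every edge $(v_i, v_j) \in E(D)$, we must verify that $\sigma(v_i,v_j)(s) = t$ if and only if $\sigma'(v_i,v_j)(\tau_i(s)) = \tau_j(t)$, for all $s, t \in A$.

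The key step is a direct substitution. Starting from $\sigma'(v_i,v_j) = \tau_j \circ \sigma(v_i,v_j) \circ \tau_i^{-1}$, I compute
$$\sigma'(v_i,v_j)(\tau_i(s)) = \tau_j\bigl(\sigma(v_i,v_j)(\tau_i^{-1}(\tau_i(s)))\bigr) = \tau_j\bigl(\sigma(v_i,v_j)(s)\bigr).$$
So the equation $\sigma'(v_i,v_j)(\tau_i(s)) = \tau_j(t)$ becomes $\tau_j(\sigma(v_i,v_j)(s)) = \tau_j(t)$, which is equivalent to $\sigma(v_i,v_j)(s) = t$ because $\tau_j \in S_A$ is a bijection. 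This gives the required ``if and only if'' in both directions simultaneously.

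Finally, I will note that $\sigma'$ is a legitimate $S_A$-labeling: each $\sigma'(v_i,v_j)$ is a composition of three elements of $S_A$, hence itself an element of $S_A$. So $(D, \sigma')$ is an $S_A$-labeling, and the tuple $(\tau_1, \ldots, \tau_n)$ witnesses its equivalence to $(D, \sigma)$. There is no real obstacle here; the statement amounts to a bookkeeping check that the definition of equivalence has been packaged exactly so that conjugation by the $\tau_i$'s on each directed edge preserves the equivalence class, and the proof is simply verifying that the algebra works out.
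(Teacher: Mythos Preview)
Your proof is correct. The paper actually states this as an Observation with no explicit proof, treating it as immediate from the definition of equivalent $S$-labelings; your verification---checking that $\sigma'(v_i,v_j)(\tau_i(s)) = \tau_j(\sigma(v_i,v_j)(s))$ and using that $\tau_j$ is a bijection---is exactly the routine unwinding the reader is expected to do.
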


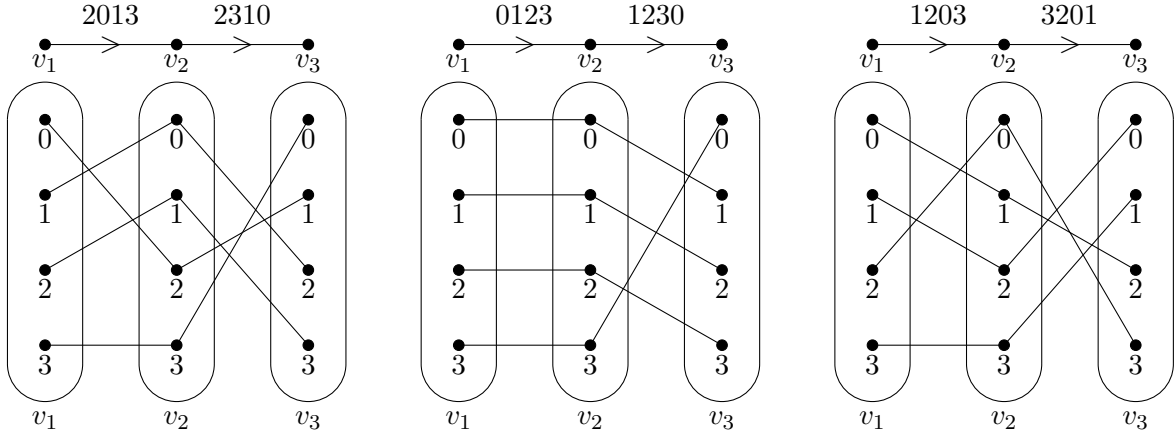
\begin{figure}[t]
\begin{center}
\begin{tikzpicture}
\coordinate (A) at (0,0);
\coordinate (B) at (1.75,0);
\coordinate (C) at (3.5,0);
\draw (A)--(B)--(C);
\filldraw[black] (A) circle [radius=2pt] node[below] {$v_1$};
\filldraw[black] (B) circle [radius=2pt] node[below] {$v_2$};
\filldraw[black] (C) circle [radius=2pt] node[below] {$v_3$};
\draw (.875,0) node {$>$};
\draw (.875,.4) node {$2013$};
\draw (2.625,0) node {$>$};
\draw (2.625,.4) node {$2310$};
\coordinate (A) at (0,-1);
\coordinate (B) at (0,-2);
\coordinate (C) at (0,-3);
\coordinate (D) at (0,-4);
\coordinate (AA) at (1.75,-1);
\coordinate (BB) at (1.75,-2);
\coordinate (CC) at (1.75,-3);
\coordinate (DD) at (1.75,-4);
\coordinate (AAA) at (3.5,-1);
\coordinate (BBB) at (3.5,-2);
\coordinate (CCC) at (3.5,-3);
\coordinate (DDD) at (3.5,-4);
\draw (A)--(CC)--(BBB);
\draw (B)--(AA)--(CCC);
\draw (C)--(BB)--(DDD);
\draw (D)--(DD)--(AAA);
\filldraw[black] (A) circle [radius=2pt] node[below] {$0$};
\filldraw[black] (B) circle [radius=2pt] node[below] {$1$};
\filldraw[black] (C) circle [radius=2pt] node[below] {$2$};
\filldraw[black] (D) circle [radius=2pt] node[below] {$3$};
\filldraw[black] (AA) circle [radius=2pt] node[below] {$0$};
\filldraw[black] (BB) circle [radius=2pt] node[below] {$1$};
\filldraw[black] (CC) circle [radius=2pt] node[below] {$2$};
\filldraw[black] (DD) circle [radius=2pt] node[below] {$3$};
\filldraw[black] (AAA) circle [radius=2pt] node[below] {$0$};
\filldraw[black] (BBB) circle [radius=2pt] node[below] {$1$};
\filldraw[black] (CCC) circle [radius=2pt] node[below] {$2$};
\filldraw[black] (DDD) circle [radius=2pt] node[below] {$3$};
\begin{scope}[shift={(0,0)}]
\draw (0.5,-1)  arc(0:180:.5);
\draw (-.5,-1)--(-.5,-4.25);
\draw (.5,-1)--(.5,-4.25);
\draw (-.5,-4.25)  arc(180:360:.5);
\draw (0,-5) node {$v_1$};
\end{scope}
\begin{scope}[shift={(1.75,0)}]
\draw (0.5,-1)  arc(0:180:.5);
\draw (-.5,-1)--(-.5,-4.25);
\draw (.5,-1)--(.5,-4.25);
\draw (-.5,-4.25)  arc(180:360:.5);
\draw (0,-5) node {$v_2$};
\end{scope}
\begin{scope}[shift={(3.5,0)}]
\draw (0.5,-1)  arc(0:180:.5);
\draw (-.5,-1)--(-.5,-4.25);
\draw (.5,-1)--(.5,-4.25);
\draw (-.5,-4.25)  arc(180:360:.5);
\draw (0,-5) node {$v_3$};
\end{scope}

%%%%%%%%%%%%%%%%%%%%%%%%%%
\begin{scope}[shift={(5.5,0)}]
\coordinate (A) at (0,0);
\coordinate (B) at (1.75,0);
\coordinate (C) at (3.5,0);
\draw (A)--(B)--(C);
\filldraw[black] (A) circle [radius=2pt] node[below] {$v_1$};
\filldraw[black] (B) circle [radius=2pt] node[below] {$v_2$};
\filldraw[black] (C) circle [radius=2pt] node[below] {$v_3$};
\draw (.875,0) node {$>$};
\draw (.875,.4) node {$0123$};
\draw (2.625,0) node {$>$};
\draw (2.625,.4) node {$1230$};
\coordinate (A) at (0,-1);
\coordinate (B) at (0,-2);
\coordinate (C) at (0,-3);
\coordinate (D) at (0,-4);
\coordinate (AA) at (1.75,-1);
\coordinate (BB) at (1.75,-2);
\coordinate (CC) at (1.75,-3);
\coordinate (DD) at (1.75,-4);
\coordinate (AAA) at (3.5,-1);
\coordinate (BBB) at (3.5,-2);
\coordinate (CCC) at (3.5,-3);
\coordinate (DDD) at (3.5,-4);
\draw (A)--(AA)--(BBB);
\draw (B)--(BB)--(CCC);
\draw (C)--(CC)--(DDD);
\draw (D)--(DD)--(AAA);
\filldraw[black] (A) circle [radius=2pt] node[below] {$0$};
\filldraw[black] (B) circle [radius=2pt] node[below] {$1$};
\filldraw[black] (C) circle [radius=2pt] node[below] {$2$};
\filldraw[black] (D) circle [radius=2pt] node[below] {$3$};
\filldraw[black] (AA) circle [radius=2pt] node[below] {$0$};
\filldraw[black] (BB) circle [radius=2pt] node[below] {$1$};
\filldraw[black] (CC) circle [radius=2pt] node[below] {$2$};
\filldraw[black] (DD) circle [radius=2pt] node[below] {$3$};
\filldraw[black] (AAA) circle [radius=2pt] node[below] {$0$};
\filldraw[black] (BBB) circle [radius=2pt] node[below] {$1$};
\filldraw[black] (CCC) circle [radius=2pt] node[below] {$2$};
\filldraw[black] (DDD) circle [radius=2pt] node[below] {$3$};
\begin{scope}[shift={(0,0)}]
\draw (0.5,-1)  arc(0:180:.5);
\draw (-.5,-1)--(-.5,-4.25);
\draw (.5,-1)--(.5,-4.25);
\draw (-.5,-4.25)  arc(180:360:.5);
\draw (0,-5) node {$v_1$};
\end{scope}
\begin{scope}[shift={(1.75,0)}]
\draw (0.5,-1)  arc(0:180:.5);
\draw (-.5,-1)--(-.5,-4.25);
\draw (.5,-1)--(.5,-4.25);
\draw (-.5,-4.25)  arc(180:360:.5);
\draw (0,-5) node {$v_2$};
\end{scope}
\begin{scope}[shift={(3.5,0)}]
\draw (0.5,-1)  arc(0:180:.5);
\draw (-.5,-1)--(-.5,-4.25);
\draw (.5,-1)--(.5,-4.25);
\draw (-.5,-4.25)  arc(180:360:.5);
\draw (0,-5) node {$v_3$};
\end{scope}
\end{scope}
%%%%%%%%%%%%%%%%%%%%%%%%%%
\begin{scope}[shift={(11,0)}]
\coordinate (A) at (0,0);
\coordinate (B) at (1.75,0);
\coordinate (C) at (3.5,0);
\draw (A)--(B)--(C);
\filldraw[black] (A) circle [radius=2pt] node[below] {$v_1$};
\filldraw[black] (B) circle [radius=2pt] node[below] {$v_2$};
\filldraw[black] (C) circle [radius=2pt] node[below] {$v_3$};
\draw (.875,0) node {$>$};
\draw (.875,.4) node {$1203$};
\draw (2.625,0) node {$>$};
\draw (2.625,.4) node {$3201$};
\coordinate (A) at (0,-1);
\coordinate (B) at (0,-2);
\coordinate (C) at (0,-3);
\coordinate (D) at (0,-4);
\coordinate (AA) at (1.75,-1);
\coordinate (BB) at (1.75,-2);
\coordinate (CC) at (1.75,-3);
\coordinate (DD) at (1.75,-4);
\coordinate (AAA) at (3.5,-1);
\coordinate (BBB) at (3.5,-2);
\coordinate (CCC) at (3.5,-3);
\coordinate (DDD) at (3.5,-4);
\draw (A)--(BB)--(CCC);
\draw (B)--(CC)--(AAA);
\draw (C)--(AA)--(DDD);
\draw (D)--(DD)--(BBB);
\filldraw[black] (A) circle [radius=2pt] node[below] {$0$};
\filldraw[black] (B) circle [radius=2pt] node[below] {$1$};
\filldraw[black] (C) circle [radius=2pt] node[below] {$2$};
\filldraw[black] (D) circle [radius=2pt] node[below] {$3$};
\filldraw[black] (AA) circle [radius=2pt] node[below] {$0$};
\filldraw[black] (BB) circle [radius=2pt] node[below] {$1$};
\filldraw[black] (CC) circle [radius=2pt] node[below] {$2$};
\filldraw[black] (DD) circle [radius=2pt] node[below] {$3$};
\filldraw[black] (AAA) circle [radius=2pt] node[below] {$0$};
\filldraw[black] (BBB) circle [radius=2pt] node[below] {$1$};
\filldraw[black] (CCC) circle [radius=2pt] node[below] {$2$};
\filldraw[black] (DDD) circle [radius=2pt] node[below] {$3$};
\begin{scope}[shift={(0,0)}]
\draw (0.5,-1)  arc(0:180:.5);
\draw (-.5,-1)--(-.5,-4.25);
\draw (.5,-1)--(.5,-4.25);
\draw (-.5,-4.25)  arc(180:360:.5);
\draw (0,-5) node {$v_1$};
\end{scope}
\begin{scope}[shift={(1.75,0)}]
\draw (0.5,-1)  arc(0:180:.5);
\draw (-.5,-1)--(-.5,-4.25);
\draw (.5,-1)--(.5,-4.25);
\draw (-.5,-4.25)  arc(180:360:.5);
\draw (0,-5) node {$v_2$};
\end{scope}
\begin{scope}[shift={(3.5,0)}]
\draw (0.5,-1)  arc(0:180:.5);
\draw (-.5,-1)--(-.5,-4.25);
\draw (.5,-1)--(.5,-4.25);
\draw (-.5,-4.25)  arc(180:360:.5);
\draw (0,-5) node {$v_3$};
\end{scope}
\end{scope}
\end{tikzpicture}
\end{center}
\caption{In the top row we have three equivalent  $S_4$-labelings of $G=P_3$ where $A=\Z_4$ and below we have their visualization as described in Figure~\ref{fig:cover_visual}. On the left we start with $(D,\sigma)$. In the middle we have the equivalent $S_4$-labeling from $(\tau_1,\tau_2,\tau_3)=(0123,1203,0123)$. This is $(D,\sigma_{v_2})$, which is an example of Lemma~\ref{lem:equal_covers}(i) with chosen vertex $v_2$ and permutation $\alpha=1203$. On the right we have the equivalent $S_4$-labeling from $(\tau_1,\tau_2,\tau_3)=(1023,1023,1023)$. This is $(D,\sigma_{\alpha})$, which is an example of Lemma~\ref{lem:equal_covers}(ii)  where  $\alpha=1023$.}
\label{fig:equiv_cover}
\end{figure}

For a fixed orientation $D$ of the graph $G$, we say $G$ is \emph{$(D,S)$-$k$-colorable}  if there is a proper $S$-$k$-coloring of $(D, \sigma)$ whenever $(D, \sigma)$ is an $S$-labeling of $G$.  Notice that when $S$ is a nonempty subset of $S_A$ with the property that if $\pi \in S$, then $\pi^{-1} \in S$, our chosen orientation doesn't matter since we can form an equivalent $S$-labeling by changing the orientation of the edge and the associated permutation $\pi$ with $\pi^{-1}$.  We now formally state this idea.

\begin{obs} \label{obs: orient}
Suppose $S$ is a nonempty subset of $S_A$ with the property that if $\pi \in S$, then $\pi^{-1} \in S$.  Suppose $D$ is a fixed orientation of $G$.  If $G$ is $(D,S)$-$k$-colorable, then $G$ is $S$-$k$-colorable. Moreover, if $|A| \geq m$, there is an $S$-labeling of $G$, $(D, \sigma)$, satisfying $P_{S}(G, m) = P_{S}(G, m, (D,\sigma))$.
\end{obs}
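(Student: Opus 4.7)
The plan is to exhibit an orientation-changing bijection on the space of $S$-labelings that preserves the number of proper $S$-$k$-colorings. Given any orientation $D'$ of $G$ and any $S$-labeling $(D',\sigma')$, I would define a corresponding $S$-labeling $(D,\sigma)$ on the fixed orientation $D$ edge-by-edge: on each edge $e$ where $D$ and $D'$ agree in direction, set $\sigma(e)=\sigma'(e)$; on each edge where they disagree, set $\sigma(e)=(\sigma'(e))^{-1}$. Since $S$ is closed under inverses by hypothesis, $(D,\sigma)$ is a legitimate $S$-labeling, and this construction is clearly an involution between the sets of $S$-labelings on $D$ and on $D'$.

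The key step is to verify that $\kappa:V(G)\to A$ is a proper $S$-$k$-coloring of $(D',\sigma')$ if and only if it is a proper $S$-$k$-coloring of $(D,\sigma)$. On edges where the orientations agree, the constraints are literally identical. On an edge where $(u,v)\in E(D)$ but $(v,u)\in E(D')$, with $\sigma'(v,u)=\pi$ and $\sigma(u,v)=\pi^{-1}$, the constraint imposed by $(D,\sigma)$ is $\pi^{-1}(\kappa(u))\neq \kappa(v)$, which is logically equivalent to $\pi(\kappa(v))\neq \kappa(u)$, i.e.\ the constraint imposed by $(D',\sigma')$. Moreover, both labelings have the same vertex set, so the condition $|\kappa(V(G))|\leq k$ carries over verbatim, which means proper $S$-$k$-colorings are in exact correspondence.

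With this bijection established, both claims follow. For the first statement, if every $S$-labeling on the fixed orientation $D$ admits a proper $S$-$k$-coloring, then for an arbitrary $S$-labeling $(D',\sigma')$ I pass to the corresponding $(D,\sigma)$, take a proper $S$-$k$-coloring $\kappa$, and observe by the bijection that $\kappa$ is also a proper $S$-$k$-coloring of $(D',\sigma')$. For the ``moreover'' statement, since $G$ and $A$ are finite, there are only finitely many $S$-labelings of $G$, so the minimum defining $P_S(G,m)$ is attained by some $(D^{\ast},\sigma^{\ast})$; applying the bijection transports $\sigma^{\ast}$ to an $S$-labeling $\sigma$ on the prescribed orientation $D$, and this $(D,\sigma)$ has exactly the same number of proper $S$-$m$-colorings as $(D^{\ast},\sigma^{\ast})$, namely $P_S(G,m)$.

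The argument is essentially bookkeeping, so I do not anticipate a serious obstacle. The only point that requires mild care is verifying that inverting a permutation on a reversed edge genuinely preserves each constraint, which reduces to the elementary equivalence $\pi(a)\neq b\iff \pi^{-1}(b)\neq a$; the closure of $S$ under inverses is used precisely to guarantee that the transported labeling still uses permutations from $S$.
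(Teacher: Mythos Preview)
Your proposal is correct and follows exactly the approach the paper indicates: the paper's entire justification for this observation is the sentence immediately preceding it, namely that reversing the orientation of an edge while replacing the associated permutation $\pi$ by $\pi^{-1}$ yields an equivalent $S$-labeling, and closure of $S$ under inverses ensures the result is still an $S$-labeling. Your write-up is simply a careful elaboration of this one-line remark, including the verification that the coloring constraints match via $\pi(a)\neq b\iff \pi^{-1}(b)\neq a$.
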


 When $S$ satisfies the hypotheses of Observation~\ref{obs: orient}, we will always let $D$ be the orientation of $G$ obtained by orienting each edge $v_iv_j \in E(G')$ with $j > i$ so that $v_i$ is the tail and $v_j$ is the head.  We will refer to $D$ as the \emph{canonical orientation} of $G$.  Then, in order to show that $G$ is $S$-$k$-colorable, we will show that $G$ is $(D,S)$-$k$-colorable.

\begin{lem}\label{lem:equivalent S-labeling}
Let $A$ be a finite set. Given an $S$-labeling $(D,\sigma)$ of a graph $G$ where $S\subseteq S_A$, then  we can construct equivalent $S_A$-labelings:
\begin{enumerate}[(i)]
    \item $(D,\sigma_u)$ for a chosen vertex $u\in V(G)$ and permutation $\alpha\in S_A$. We set $\sigma_u(e)$ equal to $\sigma(e)$ except for edges $e$ that are incident to $u$. If $e = (u,v)$, let $\sigma_u(e)=\sigma(e)\circ \alpha^{-1}$. If $e = (v,u)$ let $\sigma_u(e)=\alpha\circ \sigma(e)$. %We certainly have that $(D,\sigma_u)$ is a $S_k$-labeling, but may not still be an $S$-labeling. 
    \item $(D,\sigma_{\alpha})$ for a chosen permutation $\alpha\in S_A$. For any edge $e$ we set $\sigma_{\alpha}(e)=\alpha^{-1}\circ \sigma(e)\circ \alpha$. Note that  $(D,\sigma_{\alpha})$ is an $S_A$-labeling, an $\alpha^{-1}S\alpha$-labeling, and will also be an $S$-labeling if $\alpha\in S$ and $S$ is closed under conjugation~\footnote{This means that whenever $\beta, \gamma \in S$, $(\beta^{-1} \circ \gamma \circ \beta) \in S$.}. 
\end{enumerate}
\label{lem:equal_covers}
\end{lem}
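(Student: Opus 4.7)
The plan is to reduce both parts of the lemma to exhibiting an explicit tuple $(\tau_1, \ldots, \tau_n) \in S_A^n$ that witnesses the equivalence. By unpacking the definition of equivalent $S$-labelings in the case $D = D'$, the condition "$\sigma(v_i,v_j)(s) = t$ iff $\sigma'(v_i,v_j)(\tau_i(s)) = \tau_j(t)$ for all $s,t$" is equivalent to the single functional identity
$$\sigma'(v_i,v_j) = \tau_j \circ \sigma(v_i,v_j) \circ \tau_i^{-1}$$
for every $(v_i,v_j) \in E(D)$; this is the content of Observation~\ref{obs: construct}. So for both (i) and (ii) it suffices to choose a tuple $(\tau_1,\ldots,\tau_n)$ and check this identity edge by edge.

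For part (i), let $u \in V(G)$ and $\alpha \in S_A$ be given, and set $\tau_u = \alpha$ together with $\tau_w = \mathrm{id}_A$ for every other vertex $w$. There are three edge types to verify. If $(v_i,v_j) \in E(D)$ is not incident to $u$, then $\tau_i = \tau_j = \mathrm{id}_A$, so $\tau_j \circ \sigma(v_i,v_j) \circ \tau_i^{-1} = \sigma(v_i,v_j) = \sigma_u(v_i,v_j)$ as desired. If the edge has the form $(u,v_j)$, then $\tau_j \circ \sigma(u,v_j) \circ \tau_u^{-1} = \sigma(u,v_j) \circ \alpha^{-1}$, which matches the definition of $\sigma_u(u,v_j)$. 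If instead the edge has the form $(v_i,u)$, then $\tau_u \circ \sigma(v_i,u) \circ \tau_i^{-1} = \alpha \circ \sigma(v_i,u)$, which again matches the definition of $\sigma_u(v_i,u)$. This completes part (i).

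For part (ii), set $\tau_i = \alpha^{-1}$ for every $i$. Then for each $(v_i,v_j) \in E(D)$ we have $\tau_j \circ \sigma(v_i,v_j) \circ \tau_i^{-1} = \alpha^{-1} \circ \sigma(v_i,v_j) \circ \alpha = \sigma_\alpha(v_i,v_j)$, so $(D,\sigma_\alpha)$ is equivalent to $(D,\sigma)$. The ancillary claims are immediate from this formula: $\sigma_\alpha(e) = \alpha^{-1} \sigma(e) \alpha$ is clearly a permutation of $A$, so $(D,\sigma_\alpha)$ is an $S_A$-labeling, and since $\sigma(e) \in S$, it lies in $\alpha^{-1} S \alpha$ by construction. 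If in addition $\alpha \in S$ and $S$ is closed under conjugation, then $\sigma_\alpha(e) \in S$ for every edge $e$, so $(D,\sigma_\alpha)$ is itself an $S$-labeling. There is no real obstacle to the argument; the only care needed is tracking which side of $\sigma$ each $\tau$ and $\tau^{-1}$ appears on, and correctly distinguishing the two orientations in part (i).
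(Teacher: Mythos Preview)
Your proof is correct and follows essentially the same approach as the paper: both reduce the lemma to Observation~\ref{obs: construct} by exhibiting an explicit tuple $(\tau_1,\ldots,\tau_n)$ for each part. Your write-up is in fact slightly more careful in part~(ii), where you take $\tau_i=\alpha^{-1}$ for all $i$ (so that $\tau_j\circ\sigma\circ\tau_i^{-1}=\alpha^{-1}\sigma\alpha=\sigma_\alpha$), whereas the paper states $\tau_j=\alpha$, which would yield $\alpha\sigma\alpha^{-1}$ rather than the $\sigma_\alpha$ defined in the statement; your choice is the one that matches.
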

\begin{proof}
Suppose we have an $S$-labeling $(D,\sigma)$ of a graph $G$ where $S\subseteq S_A$. 
For part (i), for any vertex $v_i$ and a choice of $\alpha\in S_A$ we choose the tuple $(\tau_1, \ldots, \tau_n) \in S_A^n$ such that $\tau_i=\alpha$ and $\tau_j$ is the identity for all $i\neq j$. It follows that $(D,\sigma)$ is equivalent to $(D,\sigma_{v_i})$ since $\sigma_{v_i}$ equals the $\sigma'$ specified by Observation~\ref{obs: construct} with respect to our chosen $(\tau_1, \ldots, \tau_n)$.
For part (ii), for any $\alpha\in S_A$ we choose the tuple $(\tau_1, \ldots, \tau_n) \in S_A^n$ such that  $\tau_j=\alpha$ for all $j$. It follows that $(D,\sigma)$ is equivalent to $(D,\sigma_{\alpha})$ since $\sigma_{\alpha}$ equals the $\sigma'$ specified by Observation~\ref{obs: construct} with respect to our chosen $(\tau_1, \ldots, \tau_n)$.
\end{proof}

\begin{cor}
    Suppose $A$ is a finite set.  For any $S_A$-labeling  $(D,\sigma)$ of $G$ and spanning forest $F$ of $G$ there is an equivalent $S_A$-labeling $(D,\sigma')$ where $\sigma'(e)$ is the identity permutation for all edges $e \in E(F)$. 
\label{cor:id_on_tree}
\end{cor}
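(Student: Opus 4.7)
The plan is to apply Lemma~\ref{lem:equal_covers}(i) vertex-by-vertex, walking outward from a chosen root of each component tree of $F$ in breadth-first order. At each step we use the freedom in choosing the permutation $\alpha$ to make the unique edge joining the current vertex to its parent equal to the identity. Since each step produces an equivalent $S_A$-labeling and equivalence of labelings is transitive, composing all these steps yields the desired $\sigma'$.

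In more detail: pick a root $r_T$ in each tree $T$ of $F$, and enumerate the non-root vertices of $F$ as $v_1, v_2, \ldots, v_t$ so that every vertex appears after its parent in $F$ (any BFS or DFS ordering from the roots works). Set $\sigma^{(0)} = \sigma$ and, for $i \geq 1$, construct $\sigma^{(i)}$ as follows. Let $e_i$ be the edge of $F$ joining $v_i$ to its parent $p(v_i)$, and let $\pi_i = \sigma^{(i-1)}(e_i)$. Apply Lemma~\ref{lem:equal_covers}(i) at the vertex $v_i$ with $\alpha_i = \pi_i$ when $e_i = (v_i, p(v_i)) \in E(D)$, and with $\alpha_i = \pi_i^{-1}$ when $e_i = (p(v_i), v_i) \in E(D)$. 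A direct check from the definition of $\sigma_u$ in Lemma~\ref{lem:equal_covers}(i) shows that in either case $\sigma^{(i)}(e_i)$ is the identity permutation.

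The one point requiring verification is that after $\sigma^{(i)}(e_i)$ has been set to the identity, no later step disturbs it. This is the reason for processing in BFS order. The step at $v_j$ with $j > i$ only modifies edges incident to $v_j$, while $e_i$ is incident only to $v_i$ and $p(v_i)$. The vertex $v_i$ is processed exactly once (at step $i$), and $p(v_i)$ is either a root (never processed) or equals some $v_k$ with $k < i$ by the ordering condition. In either case $v_j \notin \{v_i, p(v_i)\}$, so $\sigma^{(j)}(e_i) = \sigma^{(i)}(e_i) = \mathrm{id}$. Iterating this, $\sigma^{(t)}(e_i) = \mathrm{id}$ for every $i$, i.e., $\sigma^{(t)}$ is the identity on every edge of $F$.

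Finally, $(D,\sigma^{(t)})$ is equivalent to $(D,\sigma) = (D, \sigma^{(0)})$: each intermediate pair $(D, \sigma^{(i-1)})$ and $(D, \sigma^{(i)})$ is an equivalent pair by Lemma~\ref{lem:equal_covers}(i), and one checks directly from the definition of equivalent $S$-labelings in Section~\ref{basic} that equivalence is transitive (the witnessing tuples $(\tau_1, \ldots, \tau_n)$ compose componentwise). Setting $\sigma' = \sigma^{(t)}$ therefore gives the desired labeling. The main conceptual step is the choice of ordering, which turns a potentially circular system of constraints into a triangular one that can be solved greedily; there is no further combinatorial obstacle.
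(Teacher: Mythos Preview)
Your proof is correct and is essentially the same approach as the paper's: the paper simply says ``We can use Lemma~\ref{lem:equal_covers} part (i) inductively on the number of vertices in $G$,'' and you have carefully spelled out exactly this induction, including the BFS ordering that makes the argument work and the verification that earlier edges are not disturbed by later steps.
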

\begin{proof}
    We can use Lemma~\ref{lem:equal_covers} part (i) inductively on the number of vertices in $G$.
\end{proof}

Several researchers have similar versions of Corollary~\ref{cor:id_on_tree} in different contexts~\cite{GR01,DP15,KM19,KM21}. 

\section{Counting DP-colorings} \label{DP}

In this section we will assume that $G$ is a simple graph unless otherwise noted, $V(G)=\{v_1,\ldots, v_n\}$, and $|E(G)|=m$.

Suppose $\F$ is an arbitrary field, $A \subseteq \F$, and $|A|=k$.  Suppose $(D,\sigma)$ is an $S_A$-labeling of $G$.  We say that $f \in \F[x_1, \ldots, x_n]$ \emph{covers} $(D, \sigma)$ if $f(a_1, \ldots, a_n) \neq 0$ implies that $\kappa:V(G)\rightarrow A$ given by $\kappa(v_i)=a_i$, is a proper $S_A$-$k$-coloring of $G$.  Now, let $\mathcal{F}_{G,(D,\sigma)}$ be the set of all polynomials in $\F[x_1, \ldots, x_n]$ that cover $(D, \sigma)$ and are nonzero for at least one element in $A^n$.   

As mentioned above, we use a well-known result of Alon and F\"{u}redi, Theorem~\ref{thm: AandF}, to establish a non-trivial lower bounds on the DP color function of certain graphs.  For our purposes, it will be easier to apply the following weaker version of Theorem~\ref{thm: AandF}.

\begin{cor} [B. Bosek, J. Grytczuk, G. Gutowski, O. Serra, M. Zajac~\cite{BG22}] \label{thm: bound}
Let $\mathbb{F}$ be an arbitrary field, let $A_1$, $A_2$, $\ldots$, $A_n$ be any non-empty subsets of $\mathbb{F}$, and let $B = \prod_{i=1}^n A_i$.  Suppose that $P \in \mathbb{F}[x_1, \ldots, x_n]$ is a polynomial of degree $d$ that does not vanish on all of $B$.  If $S = \sum_{i=1}^n |A_i|$, $t = \max |A_i|$, $S \geq n + d$, and $t \geq 2$, then the number of points in $B$ for which $P$ has a non-zero value is at least $t^{(S-n-d)/(t-1)}.$
\end{cor}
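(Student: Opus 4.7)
The plan is to deduce this directly from Theorem~\ref{thm: AandF}. Alon--F\"uredi says the number of non-zero points of $P$ on $B$ is at least
$$ \min \prod_{i=1}^n q_i $$
over integer tuples with $1 \leq q_i \leq |A_i|$ and $\sum_{i=1}^n q_i \geq S - d$. Since $|A_i| \leq t$ for every $i$, enlarging the feasible region to $1 \leq q_i \leq t$ can only shrink the minimum, so it suffices to bound the minimum of $\prod q_i$ over the relaxed problem from below by $t^{(S-n-d)/(t-1)}$. The hypothesis $S \geq n+d$ guarantees the exponent is nonnegative, and $t \geq 2$ ensures that the denominator of the exponent is positive and that the feasible set is non-empty (since $S \leq nt$).

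Next I would run a standard swap argument to locate the minimizer. If a feasible tuple has two coordinates $q_i, q_j$ with $1 < q_i \leq q_j < t$, then replacing them by $(q_i - 1,\, q_j + 1)$ preserves feasibility and strictly decreases the product, because
$$ (q_i - 1)(q_j + 1) - q_i q_j \;=\; q_i - q_j - 1 \;<\; 0. $$
Iterating, an optimal tuple has at most one coordinate strictly between $1$ and $t$; up to permutation it consists of $j$ coordinates equal to $t$, one coordinate equal to some $r \in [1,t]$, and $n-j-1$ coordinates equal to $1$. At the optimum the sum constraint may be taken to be tight, which gives
$$ j(t-1) + r \;=\; S - d - n + 1. $$

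Now write $\alpha = (S - d - n)/(t-1) \geq 0$, set $j = \lfloor \alpha \rfloor$ and $\beta = \alpha - j \in [0,1)$; then $r = 1 + \beta(t-1) \in [1, t]$, and the product at the minimizer is $t^j \bigl(1 + \beta(t-1)\bigr)$. The desired lower bound $t^\alpha = t^{j+\beta}$ therefore reduces to the single inequality
$$ 1 + \beta(t-1) \;\geq\; t^{\beta} \qquad (\beta \in [0,1]), $$
which is exactly the statement that the secant of the convex function $\beta \mapsto t^\beta$ joining $(0,1)$ and $(1,t)$ lies above the graph. This convexity step is the only analytic content; combined with the swap-reduction it yields the claimed bound on $\min \prod q_i$ and hence the corollary.

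The main (mild) obstacle is executing the swap reduction carefully enough to conclude that the minimum is attained at the parameterized extremal tuples described above; once that structural statement is in hand, the parameterization and convexity comparison are routine.
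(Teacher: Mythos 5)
The paper does not prove Corollary~\ref{thm: bound}; it is quoted from~\cite{BG22} without proof, so there is no in-paper argument to compare against. Your derivation from Theorem~\ref{thm: AandF} is correct, and the structure is sound: relaxing the box constraints to $1 \le q_i \le t$ is exactly what makes the swap $(q_i,q_j)\mapsto(q_i-1,q_j+1)$ stay feasible, a minimizer must therefore have at most one coordinate strictly between $1$ and $t$ and (by decrementing any coordinate $\ge 2$) a tight sum, and the chord inequality $t^{\beta}\le 1+\beta(t-1)$ for $\beta\in[0,1]$ closes the estimate. The only loose ends are trivial degeneracies that your parameterization glosses over: the all-ones minimizer forces $S=n+d$ (so the claimed bound is $1$), and $\alpha=n$ forces $d=0$ and all $|A_i|=t$ (so the minimizer is the all-$t$ tuple with product $t^n=t^{\alpha}$, even though the ``$j$ copies of $t$, one $r$, $n-j-1$ ones'' template formally needs $j\le n-1$); both are consistent with the bound.
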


We begin by illustrating the ease with which this result applies to the list color function because we can work with the graph polynomial over the reals. A version of this proposition for planar graphs was also observed in~\cite{BG22}, the paper that initiated the idea of using Theorem~\ref{thm: AandF} to bound the list color function of planar graphs.

\begin{pro}\label{prop: listcoloring}
Suppose $G$ is an $n$-vertex graph with $m$ edges, and $k$ is a positive integer greater than 1 satisfying $\chi_{\ell}(G) \leq k$.  If $m \leq (k-1)n$, then 
$$P_{\ell}(G,k) \geq k^{n-\frac{m}{k-1}}.$$
\end{pro}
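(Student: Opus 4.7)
The plan is to apply Corollary~\ref{thm: bound} directly to the standard graph polynomial over $\R$, for an arbitrary $k$-assignment $L$ of $G$, and then take a minimum.

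Fix a $k$-assignment $L$ of $G$, so each $L(v_i) \subseteq \R$ has size $k$ (we may as well assume $L(v_i) \subseteq \R$ by identifying each color list with $k$ distinct real numbers). Define the graph polynomial
\[
P(x_1, \ldots, x_n) \;=\; \prod_{v_iv_j \in E(G)} (x_i - x_j) \;\in\; \R[x_1, \ldots, x_n],
\]
where for each edge we fix any ordering of its endpoints (a sign does not affect vanishing). A tuple $(a_1, \ldots, a_n) \in L(v_1) \times \cdots \times L(v_n)$ satisfies $P(a_1, \ldots, a_n) \neq 0$ if and only if the map $\kappa(v_i) = a_i$ is a proper $L$-coloring of $G$, so counting proper $L$-colorings is exactly counting the nonzeros of $P$ on $B = \prod_i L(v_i)$.

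Now I would verify the hypotheses of Corollary~\ref{thm: bound}. The polynomial $P$ has degree $d = m$. The assumption $\chi_\ell(G) \leq k$ guarantees that $G$ has at least one proper $L$-coloring, so $P$ does not vanish on all of $B$. With $|A_i| = k$ for every $i$, we have $S = kn$ and $t = k \geq 2$. The hypothesis $S \geq n + d$ becomes $kn \geq n + m$, which is exactly the assumption $m \leq (k-1)n$. Corollary~\ref{thm: bound} therefore gives
\[
P(G,L) \;\geq\; k^{(S-n-d)/(t-1)} \;=\; k^{(kn-n-m)/(k-1)} \;=\; k^{\,n - \frac{m}{k-1}}.
\]
Since this bound holds for every $k$-assignment $L$ of $G$, taking the minimum over $L$ yields $P_\ell(G,k) \geq k^{\,n - m/(k-1)}$, as desired.

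There is essentially no obstacle here: the graph polynomial gives exactly the right covering polynomial over $\R$, so only a degree count and a check that Corollary~\ref{thm: bound}'s hypotheses match the stated bound on $m$ are needed. The reason this argument is special to list coloring (as opposed to $S_A$-coloring with a general $S_A$-labeling) is precisely that the constraint $\pi(\kappa(u)) \neq \kappa(v)$ for an arbitrary permutation $\pi$ is not directly captured by a single linear factor $x_u - x_v$; producing an analogous covering polynomial in the DP setting is what forces the more delicate constructions (and the stricter bounds on $m$) in the later sections.
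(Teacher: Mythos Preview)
Your proof is correct and follows essentially the same approach as the paper: define the graph polynomial over $\R$, observe that its nonzeros on $B=\prod_i L(v_i)$ are exactly the proper $L$-colorings, and apply Corollary~\ref{thm: bound} with $S=kn$, $t=k$, $d=m$. The only cosmetic difference is that the paper fixes a minimizing $k$-assignment $L$ at the outset, whereas you prove the bound for an arbitrary $L$ and then take the minimum; the content is identical.
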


\begin{proof}
Suppose $V(G) = \{v_1, \ldots, v_n \}$.  Furthermore suppose that $L$ is a $k$-assignment for $G$ such that $L(v_i) \subset \R$ for each $i \in [n]$ and $P(G,L) = P_{\ell}(G,k)$.  Now, suppose $f \in \R[x_1, \ldots, x_n]$ is given by
$$f(x_1, \ldots, x_n) = \prod_{v_iv_j \in E(G), i < j} (x_i-x_j).$$
Clearly, $f$ is of degree $m$.  For each $i \in [n]$ let $A_i = L(v_i)$, and let $B = \prod_{i=1}^n A_i$.  By the formula for $f$, we have that for any $(b_1, \ldots, b_n) \in B$, $f(b_1, \ldots, b_n) \neq 0$ if and only if the function $g: V(G) \rightarrow \R$ given by $g(v_i)=b_i$ for each $i \in [n]$ is a proper $L$-coloring of $G$.

Consequently, $P(G,L)$ is the number of elements in $B$ for which $f$ has a nonzero value.  Since $\chi_{\ell}(G) \leq k$, we know that $f$ does not vanish on all of $B$.  Finally, Corollary~\ref{thm: bound} yields the desired result. 
\end{proof}

In Section~\ref{sec: linear}, with substantially more work, we will show the same bounds (on the number of edges and on the number of colorings) work when we consider colorings of $S$-labeled graphs for $S$ consisting of linear permutations.  

In order to apply Corollary~\ref{thm: bound} to bound the DP-color function (or, more generally, the number of colorings of $S$-labeled graphs), it is not enough to work with the graph polynomial over the reals. The design of the applicable polynomial over a finite field is more involved as we describe in the rest of this section. 

We take $\F=\F_{k}$ where $k$ is a power of a prime, $A_i=\F_k$ and  $B = \F_k^n$.  Then, we define for any $S_{\F}$-labeling $(D, \sigma)$ of a graph $G$  a polynomial $f_{(D,\sigma)} \in \mathcal{F}_{G,(D,\sigma)}$ of some degree $d>0$. This makes $S=kn$ and $t=k$.  Finally, in the case that there is a proper $S_{\F}$-coloring of $(D, \sigma)$, Corollary~\ref{thm: bound} gives a lower bound of $k^{(kn-n-d)/(k-1)}$ proper $S_{\F}$-colorings of $(D, \sigma)$ provided $d \leq (k-1)n$.  

In the next few propositions we will be defining polynomials over $\F_k[x,y]$ that are zero on  points $(c,\pi(c))$ for a permutation $\pi\in S_{\F_k}$ for all $c\in \F_k$. Our building blocks will be {\it $L$-polynomials}. 
An $L$-polynomial is a polynomial in $\F_k[x,y]$ constructed from $i,j \in \F_k$ and $\pi\in S_{\F_k}$ given by
$$L^{\pi}_{i,j}(x,y):=(j-i)(y-\pi(i))-(\pi(j)-\pi(i))(x-i).$$
Essentially, $L^\pi_{i,j}(x,y)$ will be zero on all points that lie on the line between the points $(i,\pi(i))$ and $(j,\pi(j))$.  Also, when $i \neq j$, we define the \emph{permutation of $\F_k$ corresponding to $L^\pi_{i,j}(x,y)$} as the function $\sigma^\pi_{i,j} : \F_k \rightarrow \F_k$ given by
$$\sigma^\pi_{i,j}(x) = (j-i)^{-1}(\pi(j)-\pi(i))(x-i) + \pi(i).$$
Notice that $\sigma_{i,j}^\pi$ is a permutation of $\F_k$ and $L^\pi_{i,j}(x,\sigma^\pi_{i,j}(x))=0$ for each $x \in \F_k$. We will discuss these permutations more in Section~\ref{sec: linear}.

\begin{pro} \label{prop: edgecoloring}
Let $\pi:\F_k\rightarrow \F_k$ be a permutation with $k\geq 2$. There exists a polynomial $f_{\pi} \in \mathbb{F}_k[x,y]$ of degree $d=\lfloor k/2 \rfloor$ that is a product of $L$-polynomials such that $f_{\pi}(c,\pi(c))=0$ for all $c\in\F_k$. 
\end{pro}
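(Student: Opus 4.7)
The plan is to realize $f_\pi$ as a product of $\lfloor k/2 \rfloor$ $L$-polynomials, each of degree one, whose zero loci are affine lines that together cover every point of the graph $\{(c, \pi(c)) : c \in \F_k\}$. Since $L_{i,j}^\pi(x,y)$ vanishes on the entire line through $(i, \pi(i))$ and $(j, \pi(j))$, the construction reduces to choosing a family of $\lfloor k/2 \rfloor$ lines whose union contains every graph point.

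When $k$ is even, I partition $\F_k$ arbitrarily into $k/2$ pairs $\{i_s, j_s\}$ and set $f_\pi = \prod_{s=1}^{k/2} L_{i_s, j_s}^\pi$; each factor vanishes on its two designated graph points, and the product has degree $k/2 = \lfloor k/2 \rfloor$. When $k$ is odd, pairing alone leaves one graph point uncovered, so one of the $(k-1)/2$ chosen lines must pass through at least three graph points. The proof thus reduces to the key lemma: \emph{for every odd prime power $k \geq 3$ and every permutation $\pi$ of $\F_k$, there exist distinct $a, b, c \in \F_k$ with $(a, \pi(a))$, $(b, \pi(b))$, $(c, \pi(c))$ collinear.} Granting it, the single factor $L_{a,b}^\pi$ vanishes at all three points (they lie on its associated line), and pairing up the remaining $k - 3$ elements of $\F_k$ into $(k-3)/2$ pairs yields an overall product of $1 + (k-3)/2 = \lfloor k/2 \rfloor$ $L$-polynomials whose zero set contains the entire graph.

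The main obstacle is the key lemma. To prove it, I would translate so that $\pi(0) = 0$ (which preserves collinearity) and then study the slopes $\sigma(c) := \pi(c)/c$ from $(0, 0)$ to the other $k - 1$ graph points. If any two slopes agree, the corresponding pair of graph points together with $(0, 0)$ is already collinear. Otherwise $\sigma : \F_k^* \to \F_k^*$ is a bijection, and since $\pi$ restricts to a bijection of $\F_k^*$ once $\pi(0) = 0$, the map $c \mapsto c \cdot \sigma(c) = \pi(c)$ is also a bijection of $\F_k^*$; that is, $\sigma$ is a \emph{complete mapping} of the multiplicative group $\F_k^*$. But $\F_k^*$ is cyclic of even order $k - 1$, and the classical product-over-the-group argument (a finite abelian group admitting a complete mapping must satisfy $\prod_{g \in G} g = 1$, whereas $\prod_{g \in \F_k^*} g = -1 \neq 1$ in odd characteristic) rules out such a $\sigma$. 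The resulting contradiction furnishes the required collinear triple and completes the construction.
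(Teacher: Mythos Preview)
Your argument is correct. The even case is identical to the paper's, and in the odd case you and the paper both reduce to the same key lemma (existence of three collinear graph points) and finish the construction in the same way. The only difference is how that lemma is proved. The paper labels each edge $\{a,b\}$ of $K_k$ by the slope $(\pi(b)-\pi(a))/(b-a)\in\F_k\setminus\{0\}$ and invokes the fact that for odd $k$ the chromatic index of $K_k$ is $k$, not $k-1$; hence two edges at some vertex share a label, giving three collinear points. Your route is more algebraic: after translating to $\pi(0)=0$ you look only at slopes through the origin and use the product identity $\prod_{c\in\F_k^*}\pi(c)=\prod_c c\cdot\prod_c\sigma(c)$, which forces $-1=1$ if $\sigma$ were a bijection. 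This is essentially the classical obstruction to complete mappings in a cyclic group of even order, and it is arguably more self-contained than quoting the chromatic index of $K_k$. On the other hand, the paper's argument naturally finds the collinear triple through an \emph{arbitrary} vertex rather than through the distinguished origin, which is a slightly stronger intermediate statement (though not needed here).
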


\begin{proof}
Let $\pi:\F_k\rightarrow \F_k$ be a permutation with $k\geq 2$. Recall that that $L^\pi_{i,j}(i,\pi(i))=0$ and $L^\pi_{i,j}(j,\pi(j))=0$. 

When $k$ is even we arbitrarily pair off the elements in $\F_k$ so that $\F_k=\{s_1,t_1,s_2,t_2,\ldots, s_{k/2},t_{k/2}\}$.
Then, let 
$$f_{\pi}(x,y)=\prod_{i=1}^{k/2}L^\pi_{s_i,t_i}(x,y).$$
This has degree $\lfloor k/2 \rfloor$ and $f_{\pi}(c,\pi(c))=0$ for all $c\in\F_k$.

Suppose $k$ is odd. We will consider a copy of the complete graph $K_k$ with vertices $\F_k$. Label the edge between $a\in \F_k$ and $b\in \F_k$  with $(\pi(b)-\pi(a))/(b-a)$. Note that this label is never 0, so all edges have labels from $\F_k-\{0\}$. This means we are using at most $(k-1)$ edge labels. Since $K_k$ is not $(k-1)$-edge colorable (\cite{BCC67}) we know that there must be some vertex $a$ and two edges $ab_1$ and $ab_2$ with the same edge label. Thus, $L^\pi_{a,b_1}(b_2,\pi(b_2))=0$. Since $k$ is odd, we can pair off elements of $\F_k-\{a,b_1,b_2\}$ so that $\F_k-\{a,b_1,b_2\}=\{s_1,t_1,\ldots, s_{(k-3)/2},t_{(k-3)/2}\}$. Then, let 
$$f_{\pi}=L^\pi_{a,b_1}(x,y)\prod_{i=1}^{(k-3)/2}L^\pi_{s_i,t_i}(x,y).$$
This has degree $\lfloor k/2 \rfloor$ and $f_{\pi}(c,\pi(c))=0$ for all $c\in\F_k$.
\end{proof}

We have verified via a computer search that $\lfloor k/2 \rfloor$ is the best degree achievable above when $k \le 7$.  However, the achievable degree increases to $k-2$ when we require the polynomial to have at least one specified nonzero, as shown in Proposition~\ref{prop:f_pi_for_simple} below.  Recall that our big picture goal is to construct polynomials in $\mathcal{F}_{G,(D,\sigma)}$.  The issue with directly using Proposition~\ref{prop: edgecoloring}, to construct a polynomial in $\mathcal{F}_{G,(D,\sigma)}$ is that we can't directly guarantee the existence of a nonzero on a proper $S_A$-coloring of $(D, \sigma)$ without an additional hypothesis.  We will present two different possible additional hypotheses to overcome this difficulty.  The second hypothesis we describe involves specifying a nonzero (Proposition~\ref{prop:f_pi_for_simple} below).  The first allows us to directly work with the polynomials in Proposition~\ref{prop: edgecoloring} by working with multigraphs.  Our next observation and lemma describes the details.

\begin{obs}
Let $k$ be a power of a prime, $A=\F_k$, $G$ be a connected $n$-vertex simple graph with $m$ edges, $T$ be a spanning tree of $G$, and  $D$ be the canonical orientation of $G$. Suppose that $(D, \sigma)$ is an $S_{A}$-labeling of $G$ with the property that $\sigma(e)$ is the identity permutation whenever $e \in E(T)$.  Let $G'$ be the multigraph obtained from $G$ by adding $(\lfloor k/2 \rfloor - 1)$ parallel edges to each edge $e \in E(G) - E(T)$.

Let $q = \lfloor k/2 \rfloor$.  Whenever $v_iv_j \in E(T)$ with $j > i$, let $f_{\sigma(v_iv_j)}=x_i-x_j$.  Now, suppose $v_iv_j \in E(G)-E(T)$ with $j>i$ and $\pi = \sigma(v_iv_j)$. We let $f_{\sigma(v_iv_j)}$ be the product of the $q$ $L$-polynomials in the tuple $(L^\pi_{s_1,t_1}(x_i,x_j), \ldots, L^\pi_{s_q,t_q}(x_i,x_j))$ where $s_r \neq t_r$ for each $r \in [q]$ and $f_{\sigma(v_iv_j)}(a,\sigma(v_iv_j)(a))=0$ for each $a\in \F_k$.  The existence of such a tuple is guaranteed by the proof of Proposition~\ref{prop: edgecoloring}.

Now, let $\sigma'$ be the function with domain $E(D)$ given by $\sigma'(v_iv_j) = (\sigma(v_iv_j))$ whenever $v_iv_j \in E(T)$ and $\sigma'(v_iv_j) = (\sigma^\pi_{s_1,t_1}(x_i,x_j), \ldots, \sigma^\pi_{s_q,t_q}(x_i,x_j))$ whenever $v_iv_j \in E(G)-E(T)$.

Now, $(D, \sigma')$ is an $S_A$-labeling of $G'$.  If we let $$f_{(D,\sigma)}=\prod_{(v_i,v_j)\in E(D)}f_{\sigma(v_iv_j)},$$
then $f_{(D, \sigma)}(g(v_1), \ldots, g(v_n))\neq 0$ if and only if $g: V(G') \rightarrow A$ is a proper $S_A$-coloring of $(D, \sigma')$.  
\end{obs}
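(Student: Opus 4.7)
The plan is to verify the two assertions of the observation directly from the definitions: (a) $(D, \sigma')$ is a valid $S_A$-labeling of the multigraph $G'$, and (b) for any $g : V(G') \to A$ with $g(v_i) = a_i$, the evaluation $f_{(D,\sigma)}(a_1, \ldots, a_n)$ is nonzero if and only if $g$ is a proper $S_A$-coloring of $(D, \sigma')$. Since the underlying graph of $G'$ is $G$ itself (the parallel edges added on non-tree edges do not change the underlying simple graph), the orientation $D$ of $G$ is already an orientation of the underlying graph of $G'$; so for (a) only the codomain constraint on $\sigma'$ remains to be checked.

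For (a) I would argue separately on the two types of edges. If $v_iv_j \in E(T)$, then $e_{G'}(v_i, v_j) = 1$ and $\sigma'(v_i, v_j) = \sigma(v_i, v_j) \in S_A$. If $v_iv_j \in E(G) - E(T)$, then $e_{G'}(v_i, v_j) = q$ and $\sigma'(v_i, v_j)$ is the $q$-tuple $(\sigma^\pi_{s_r, t_r})_{r=1}^q$. Each $\sigma^\pi_{s_r, t_r}$ is an affine map $x \mapsto (t_r - s_r)^{-1}(\pi(t_r) - \pi(s_r))(x - s_r) + \pi(s_r)$ whose slope is nonzero (because $\pi$ is a permutation and $s_r \neq t_r$), hence a bijection of $\F_k$, so it lies in $S_A$. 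This shows $\sigma'(v_i, v_j) \in S_A^{e_{G'}(v_i, v_j)}$ in both cases.

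For (b) I would use that the product $f_{(D,\sigma)}(a_1, \ldots, a_n) = \prod_{(v_i,v_j)\in E(D)} f_{\sigma(v_iv_j)}(a_i, a_j)$ is nonzero exactly when every factor is nonzero, and then analyze each factor in turn. For a tree edge the factor is $a_i - a_j$, which is nonzero iff $a_i \neq a_j$, matching the proper-coloring condition for the identity coordinate of $\sigma'(v_i, v_j)$. For a non-tree edge the factor is $\prod_{r=1}^q L^\pi_{s_r, t_r}(a_i, a_j)$, and since each $L^\pi_{s_r, t_r}$ is linear in $y$ with coefficient $t_r - s_r \neq 0$, its zero set in $\F_k^2$ is exactly the graph of $\sigma^\pi_{s_r, t_r}$; hence $L^\pi_{s_r, t_r}(a_i, a_j) \neq 0$ iff $\sigma^\pi_{s_r, t_r}(a_i) \neq a_j$ for every $r$, which is exactly the proper-$S_A$-coloring condition at the $r$-th coordinate of $\sigma'(v_i, v_j)$.

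The observation is ultimately definitional, so there is no genuine technical obstacle; the one thing to be careful about is the orientation convention. The per-edge factors $f_{\sigma(v_iv_j)}$ are indexed by ordered pairs $(v_i, v_j) \in E(D)$ with $i < j$, and the proper-coloring condition is asymmetric (it compares $\pi(\kappa(\text{tail}))$ to $\kappa(\text{head})$), so the roles of the tail coordinate $a_i$ and head coordinate $a_j$ in the $L$-polynomials must be tracked consistently. Once this bookkeeping is fixed, the iff statement falls out of the two linear-polynomial computations above together with the defining identity $L^\pi_{s_r, t_r}(x, \sigma^\pi_{s_r, t_r}(x)) = 0$.
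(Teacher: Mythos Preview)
Your proposal is correct. The paper states this as an observation without any proof, treating it as immediate from the definitions of $S$-labeling, proper $S$-coloring, and the explicit formulae for $L^\pi_{i,j}$ and $\sigma^\pi_{i,j}$; your argument spells out exactly the verification that the paper leaves implicit, and the key computation---that the zero set of $L^\pi_{s_r,t_r}(x,y)$ in $\F_k^2$ is precisely the graph of $\sigma^\pi_{s_r,t_r}$ because the $y$-coefficient $t_r-s_r$ is a unit---is the only nontrivial point and you handle it correctly.
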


\begin{lem} \label{lem: dpchromaticnumber}
Let $k$ be a power of a prime, $A=\F_k$, $G$ be a connected $n$-vertex simple graph with $m$ edges, $T$ be a spanning tree of $G$, and  $D$ be the canonical orientation of $G$. Suppose that $(D, \sigma)$ is an $S_{A}$-labeling of $G$ with the property that $\sigma(e)$ is the identity permutation whenever $e \in E(T)$.  Let $G'$ be the multigraph obtained from $G$ by adding $(\lfloor k/2 \rfloor - 1)$ parallel edges to each edge $e \in E(G) - E(T)$.  If $\chi_{DP}(G') \leq k$ and $(D, \sigma)$ is an $S_{A}$-labeling of $G$, then there exists a polynomial $f \in \mathcal{F}_{G,(D,\sigma)}$  of degree  $
\lfloor k/2 \rfloor (m-n+1)+n-1$. 
\end{lem}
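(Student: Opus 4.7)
The plan is essentially to package the construction spelled out in the observation immediately preceding the lemma, verifying its two defining properties of membership in $\mathcal{F}_{G,(D,\sigma)}$ and then tallying the degree. I will take $f = f_{(D,\sigma)}$ to be the polynomial already defined there: a product over edges of $D$, where each tree edge $(v_i,v_j)$ contributes the factor $x_i - x_j$ (of degree $1$, valid because $\sigma$ is the identity on $T$), and each non-tree edge $(v_i,v_j)$ with $\pi = \sigma(v_i,v_j)$ contributes the polynomial $f_{\pi}(x_i,x_j)$ supplied by Proposition~\ref{prop: edgecoloring}, which is a product of $q = \lfloor k/2 \rfloor$ $L$-polynomials of total degree $q$ that vanishes on every point of the graph of $\pi$.

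Next I would check that $f$ covers $(D,\sigma)$. Suppose $f(a_1,\dots,a_n) \neq 0$. Then every factor is nonzero: for each tree edge $(v_i,v_j)$ we have $a_i \neq a_j$, i.e.\ $\sigma(v_i,v_j)(a_i) \neq a_j$; and for each non-tree edge $(v_i,v_j)$ with label $\pi$, the vanishing property of $f_{\pi}$ (Proposition~\ref{prop: edgecoloring}) forces $a_j \neq \pi(a_i)$. So the assignment $\kappa(v_i) = a_i$ is a proper $S_A$-$k$-coloring of $(D,\sigma)$, proving that $f$ covers $(D,\sigma)$.

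The crux is verifying that $f$ is nonzero somewhere on $A^n$, and this is exactly where the hypothesis $\chi_{DP}(G') \leq k$ enters. I would invoke the bijection set up in the preceding observation: $f$ factors as a product of polynomials indexed by the edges of $G'$, since each non-tree edge of $G$ expands to $q$ parallel edges in $G'$, one for each $L$-polynomial used in $f_{\pi}$. Using the identity $L^{\pi}_{i,j}(x,y)=0 \iff y = \sigma^{\pi}_{i,j}(x)$ (which follows immediately from the definitions), an evaluation point $(a_1,\dots,a_n)$ makes every factor of $f$ nonzero if and only if the associated assignment is a proper $S_A$-coloring of the labeling $(D,\sigma')$ on $G'$ described in the observation. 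Since $\chi_{DP}(G') \leq k$ and $(D,\sigma')$ is an $S_A$-labeling of $G'$, such a proper coloring exists, giving a nonzero of $f$ in $A^n$. Combined with the covering property, this establishes $f \in \mathcal{F}_{G,(D,\sigma)}$.

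Finally, degree bookkeeping: $T$ contributes $n-1$ factors of degree $1$ and the $m-(n-1)$ non-tree edges each contribute degree $q = \lfloor k/2 \rfloor$, for a total of $\lfloor k/2 \rfloor(m-n+1) + (n-1)$ as claimed. The only nontrivial step is the translation in the third paragraph, and even that is just transcribing the setup from the observation; the identity $L^{\pi}_{i,j}(x,y)=0 \iff y = \sigma^{\pi}_{i,j}(x)$ makes it mechanical. The substantive content of the lemma is therefore already packed into Proposition~\ref{prop: edgecoloring} and the preceding construction; the lemma's role is to record the output as a membership statement for $\mathcal{F}_{G,(D,\sigma)}$ with an explicit degree bound, ready to be fed into Corollary~\ref{thm: bound} in the following proof of Theorem~\ref{thm:using d=k-2 result}(i).
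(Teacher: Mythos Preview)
Your proposal is correct and follows essentially the same approach as the paper's own proof: both build $f_{(D,\sigma)}$ as the product of $x_i-x_j$ over tree edges and of the degree-$\lfloor k/2\rfloor$ polynomials from Proposition~\ref{prop: edgecoloring} over non-tree edges, then appeal to the preceding observation to identify nonzeros of $f$ with proper colorings of the auxiliary labeling $(D,\sigma')$ of $G'$, whence $\chi_{DP}(G')\le k$ supplies the required nonzero. If anything, your write-up is slightly more explicit than the paper's in checking the covering property and in invoking $\chi_{DP}(G')\le k$, but the argument is the same.
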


\begin{proof}

Let $V(G)=\{v_1,v_2,\ldots v_n\}$. We will now define a polynomial for each edge. Whenever $v_iv_j \in E(T)$ with $j > i$, let $f_{\sigma(v_iv_j)}=x_i-x_j$.  

For all other edges $v_iv_j \in E(G)-E(T)$ with $j>i$,  Proposition~\ref{prop: edgecoloring} gives us a function $f_{\sigma(v_iv_j)} \in \F_k[x_i,x_j]$ that has $f_{\sigma(v_iv_j)}(a,\sigma(v_iv_j)(a))=0$ for any $a\in \F_k$. We are also guaranteed $f_{\sigma(v_iv_j)}$ has degree $\lfloor k/2 \rfloor$. Let $$f_{(D,\sigma)}=\prod_{(v_i,v_j)\in E(D)}f_{\sigma(v_iv_j)}.$$
Notice that there is an $S_A$-labeling $(D, \sigma')$ of $G'$ such that $f_{(D, \sigma)}(g(v_1), \ldots, g(v_n))\neq 0$ if and only if $g: V(G') \rightarrow \F_k$ is a proper $S_A$-coloring of $(D, \sigma')$.  Moreover, if $g: V(G') \rightarrow \F_k$ is a proper $S_A$-coloring of $(D, \sigma')$, then $g$ is also a proper $S_A$-coloring of $(D, \sigma)$. 
\end{proof}

Now, we present the counterparts of Proposition~\ref{prop: edgecoloring} and Lemma~\ref{lem: dpchromaticnumber}, where we avoid working with multigraphs by allowing a higher degree for our polynomials.

\begin{pro} \label{prop:f_pi_for_simple}
Let $\pi:\F_k\rightarrow \F_k$ be a permutation, $k\geq 3$ and $(a,b)\in \F_k^2$ be an ordered pair such that $\pi(a)\neq b$. There exists a polynomial $f_{\pi} \in \mathbb{F}_k[x,y]$ of degree $d=k-2$ that is a product of $L$-polynomials such that $f_{\pi}(c,\pi(c))=0$ for all $c\in\F_k$ and $f_{\pi}(a,b)\neq 0$.
\end{pro}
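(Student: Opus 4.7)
The plan is to construct $f_\pi$ as a product of at most $k-2$ $L$-polynomials using a ``star plus one chord'' pattern rooted at the graph point $(a, \pi(a))$. The idea: drawing the line from $(a, \pi(a))$ to any other graph point $(c, \pi(c))$ automatically avoids $(a, b)$, since that line takes the value $\pi(a) \neq b$ at $x = a$. A naive star from $(a, \pi(a))$ to each of the $k-1$ other graph points uses $k-1$ lines, one too many, so I plan to save one line by replacing two spokes with a single chord between two graph points $(c_1, \pi(c_1)), (c_2, \pi(c_2))$ with $c_1, c_2\neq a$, provided that chord avoids $(a,b)$.

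The key step, and the main obstacle, is therefore showing that a ``safe chord'' always exists, i.e.\ that there is some pair $c_1 \neq c_2$ in $\F_k\setminus\{a\}$ such that the line through $(c_1, \pi(c_1))$ and $(c_2, \pi(c_2))$ misses $(a,b)$. I will argue this by contradiction: if every such pair were collinear with $(a,b)$, then all $k-1$ of the points $\{(c, \pi(c)) : c\neq a\}$ would lie on a single line $\ell$ through $(a,b)$. This line is non-vertical (its points have distinct $x$-coordinates $\neq a$), so $\ell$ has an equation $y = mx+d$, and the injectivity of $\pi$ on $\F_k\setminus\{a\}$ forces $m\neq 0$. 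Then $c\mapsto mc+d$ is an affine bijection of $\F_k$ whose image on $\F_k\setminus\{a\}$ misses exactly $ma+d$, so since $\pi$ is a permutation, $\pi(a) = ma+d$. But $(a,b)\in\ell$ gives $b=ma+d$, so $b=\pi(a)$, contradicting the hypothesis $\pi(a)\neq b$.

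With the safe pair $(c_1,c_2)$ in hand, I then define
\[
f_\pi(x,y) \;=\; L^\pi_{c_1,c_2}(x,y)\,\prod_{c\in\F_k\setminus\{a,c_1,c_2\}} L^\pi_{a,c}(x,y),
\]
a product of $1+(k-3)=k-2$ $L$-polynomials, hence of degree $k-2$. Checking that $f_\pi$ vanishes on the graph of $\pi$ is routine: the chord $L^\pi_{c_1,c_2}$ kills $(c_1,\pi(c_1))$ and $(c_2,\pi(c_2))$; every spoke $L^\pi_{a,c}$ kills $(a,\pi(a))$ and $(c,\pi(c))$ for $c\neq a, c_1,c_2$; so every graph point is covered. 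For non-vanishing at $(a,b)$, each spoke $L^\pi_{a,c}$ evaluated at $x=a$ forces $y=\pi(a)\neq b$, and the chord $L^\pi_{c_1,c_2}$ avoids $(a,b)$ by construction; hence the product is non-zero there.

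Finally, I will flag the small-$k$ edge case: when $k=3$ the empty product in the definition makes $f_\pi = L^\pi_{c_1,c_2}$, a single linear factor. This still works because every permutation of $\F_3$ is affine, so the chord line automatically passes through the third graph point $(a,\pi(a))$ as well, while still avoiding $(a,b)$. Aside from verifying this degenerate instance, I expect the only real content of the argument to be the contradiction in Step~1; everything else is direct verification using the vanishing properties of $L^\pi_{i,j}$ recorded in Section~\ref{DP}.
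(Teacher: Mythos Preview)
Your proof is correct and uses the same overall architecture as the paper: a star of spokes from $(a,\pi(a))$ together with one extra chord, totalling $k-2$ linear factors. The only difference is in how the safe chord is located. You argue abstractly by contradiction that \emph{some} pair $c_1,c_2\in\F_k\setminus\{a\}$ gives a chord missing $(a,b)$; the paper instead names a specific safe chord directly, taking one endpoint to be $\pi^{-1}(b)$. The point is that any line through $(\pi^{-1}(b),b)$ and a second graph point $(t,\pi(t))$ with $t\neq\pi^{-1}(b)$ meets $y=b$ only at $x=\pi^{-1}(b)\neq a$, so $L^\pi_{\pi^{-1}(b),t}(a,b)\neq 0$ automatically. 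This shortcut replaces your collinearity/affine-bijection contradiction with a one-line observation. On the other hand, your argument handles $k=3$ uniformly within the same framework (using that every permutation of $\F_3$ is affine so the single chord already covers all three graph points), whereas the paper dispatches $k=3$ by citing an external reference. So the paper's version is shorter for $k>3$, while yours is more self-contained overall.
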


\begin{proof} Let $\pi:\F_k\rightarrow \F_k$  be a permutation and $(a,b)\in \F_k^2$ be an ordered pair such that $\pi(a)\neq b$. Observation~14 in~\cite{KM20} implies the desired result when $k=3$. So, suppose $k>3$.
Notice that $L^\pi_{a,j}(x,y)$ is not zero at $(a,b)$ if $j\neq a$ and $L^\pi_{\pi^{-1}(b),j}(x,y)$ is not zero at $(a,b)$ if $j\neq \pi^{-1}(b)$. 
Now, choose two distinct elements $s$ and $t$ from $\F_k-\{a,\pi^{-1}(b)\}$. 
Let
$$f_{\pi}(x,y)=L^\pi_{a,s}(x,y)L^\pi_{\pi^{-1}(b),t}(x,y)\prod_{j\neq s,t,a,\pi^{-1}(b)}L^\pi_{a,j}(x,y).$$
By our previous notes, this has all the required properties and has degree $k-2$.
\end{proof}

\begin{lem} \label{lem: main}
Let $k$ be a power of a prime, $A=\F_k$, $G$ be a connected $n$-vertex simple graph with $m$ edges, $T$ be a spanning tree of $G$, and  $D$ be the canonical orientation of $G$. Suppose that $(D, \sigma)$ is an $S_{A}$-labeling of $G$ with the property that $\sigma(e)$ is the identity permutation whenever $e \in E(T)$. If there is a proper $S_{A}$-$k$-coloring of $(D, \sigma)$, then there exists a polynomial $f_{(D, \sigma)} \in \mathcal{F}_{G,(D,\sigma)}$  of degree at most $(k-2)(m-n+1)+n-1$. 
\end{lem}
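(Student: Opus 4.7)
The plan is to mirror the strategy of Lemma~\ref{lem: dpchromaticnumber} but to bypass the multigraph-with-parallel-edges trick by invoking Proposition~\ref{prop:f_pi_for_simple} at each non-tree edge in place of Proposition~\ref{prop: edgecoloring}. To apply Proposition~\ref{prop:f_pi_for_simple} I need a specified nonzero point for each edge polynomial, and the hypothesized proper $S_A$-$k$-coloring is exactly what supplies one.

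First, fix a proper $S_A$-$k$-coloring $\kappa$ of $(D,\sigma)$ and set $a_i = \kappa(v_i)$ for each $i \in [n]$. The tuple $(a_1,\ldots,a_n) \in A^n$ will serve as the ``anchor'' at which the final polynomial is guaranteed to be nonzero. Next, build an edge factor as follows. For every tree edge $v_iv_j \in E(T)$ with $j > i$, set $f_{\sigma(v_iv_j)}(x_i,x_j) = x_i - x_j$: this has degree $1$, it vanishes on $(c,\sigma(v_iv_j)(c)) = (c,c)$ since $\sigma(v_iv_j)$ is the identity, and it is nonzero at $(a_i,a_j)$ because $\kappa$ is proper. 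For every non-tree edge $v_iv_j \in E(G) - E(T)$ with $j > i$, write $\pi = \sigma(v_iv_j)$; since $\kappa$ is proper, $\pi(a_i) \neq a_j$, so Proposition~\ref{prop:f_pi_for_simple} applied with the ordered pair $(a,b) = (a_i,a_j)$ yields a polynomial $f_{\sigma(v_iv_j)} \in \F_k[x_i,x_j]$ of degree $k-2$ that vanishes on $(c,\pi(c))$ for every $c \in \F_k$ and is nonzero at $(a_i,a_j)$.

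Finally, define
\[
f_{(D,\sigma)} \;=\; \prod_{(v_i,v_j)\in E(D)} f_{\sigma(v_iv_j)}.
\]
Its degree is at most $(n-1) \cdot 1 + (m-n+1)(k-2)$, matching the claimed bound. To verify $f_{(D,\sigma)} \in \mathcal{F}_{G,(D,\sigma)}$: every factor is nonzero at the anchor $(a_1,\ldots,a_n)$, so $f_{(D,\sigma)}$ does not vanish on all of $A^n$; and if $f_{(D,\sigma)}(b_1,\ldots,b_n) \neq 0$ at some tuple in $A^n$, then each factor $f_{\sigma(v_iv_j)}(b_i,b_j)$ is nonzero, which rules out $(b_i,b_j) = (c,\sigma(v_iv_j)(c))$ for any $c$ and forces $\sigma(v_iv_j)(b_i) \neq b_j$, so the map $v_i \mapsto b_i$ is a proper $S_A$-coloring of $(D,\sigma)$.

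There is no real obstacle here: the entire argument is a direct packaging of the previously developed machinery, and the one place a subtlety could appear, namely whether Proposition~\ref{prop:f_pi_for_simple} can be applied uniformly across all non-tree edges, is resolved precisely by the assumed existence of a proper $S_A$-coloring supplying a simultaneously valid anchor.
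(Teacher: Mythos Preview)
Your proof is correct and is essentially identical to the paper's own argument: both fix a proper $S_A$-$k$-coloring $\kappa$ to serve as the anchor, assign the linear factor $x_i-x_j$ to each tree edge, invoke Proposition~\ref{prop:f_pi_for_simple} at each non-tree edge with the pair $(\kappa(v_i),\kappa(v_j))$ to obtain a degree-$(k-2)$ factor, and then take the product. The degree count and the verification that the product lies in $\mathcal{F}_{G,(D,\sigma)}$ are handled the same way in both.
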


\begin{proof} 
Let $V(G)=\{v_1,v_2,\ldots v_n\}$. Suppose that $\kappa:V(G)\rightarrow A$ is a proper $S_{A}$-$k$-coloring of $(D, \sigma)$. We will now define a polynomial for each edge. Whenever $v_iv_j \in E(T)$ with $j > i$, let $f_{\sigma(v_iv_j)}=x_i-x_j$. Note that for these edges $f_{\sigma(v_iv_j)}(a,\sigma(v_iv_j)(a))=f_{\sigma(v_iv_j)}(a,a)=0$ for any $a\in A$. Also, $f_{\sigma(v_iv_j)}(\kappa(v_i),\kappa(v_j))\neq 0$ because $\kappa(v_i)\neq \kappa(v_j)$ since $\kappa$ is a proper $S_{A}$-$k$-coloring and $\sigma(v_iv_j)$ is the identity. 

For all other edges $v_iv_j \in E(G)-E(T)$ with $j>i$,  Proposition~\ref{prop:f_pi_for_simple} gives us a function $f_{\sigma(v_iv_j)} \in \F_k[x_i,x_j]$ that has $f_{\sigma(v_iv_j)}(a,\sigma(v_iv_j)(a))=0$ for any $a\in \F_k$ and $f_{\sigma(v_iv_j)}(\kappa(v_i),\kappa(v_j))\neq 0$. We are also guaranteed $f_{\sigma(v_iv_j)}$ has degree $d=k-2$.  

Now let
$$f_{(D,\sigma)}=\prod_{(v_i,v_j)\in E(D)}f_{\sigma(v_iv_j)}.$$
Note that by construction $f_{(D, \sigma)}(\kappa(v_1), \ldots, \kappa(v_n))\neq 0$.  Moreover, for any $g: V(G) \rightarrow A$ if $f_{(D, \sigma)}(g(v_1), \ldots, g(v_n))\neq 0$, then $f_{\sigma(v_iv_j)}(g(v_i),g(v_j))\neq 0$ for all edges $v_iv_j \in E(G)$ with $j > i$ which means that $g$ is a proper $S_{A}$-$k$-coloring of $(D, \sigma)$.  Consequently, $f_{(D, \sigma)} \in \mathcal{F}_{G,(D,\sigma)}$.
\end{proof}

The degree $d=k-2$ is higher than we would like because the condition $nk\geq d(m-n+1)+n-1$ will only apply to sparse graphs. However, there are certain permutations, linear permutations, that we focus on in Section~\ref{sec: linear}, which only require a degree one polynomial. Now, Corollary~\ref{thm: bound}, Lemma~\ref{lem: dpchromaticnumber}, and Lemma~\ref{lem: main} imply Theorem~\ref{thm:using d=k-2 result} which we restate.

\begin{customthm}{\bf \ref{thm:using d=k-2 result}}
Let $k=p^r$ where $p$ is prime, $r \in \N$, and $k > 2$. Suppose $G$ is a connected $n$-vertex simple graph with $m$ edges, and $T$ is a spanning tree of $G$.  Then the following statements hold. 
\\
(i)  Let $q = \lfloor k/2 \rfloor$. Let $G'$ be the multigraph obtained from $G$ by adding $(q - 1)$ parallel edges to each edge $e \in E(G) - E(T)$.  If $\chi_{DP}(G') \leq k$ and $m \leq n(1 + (k-2)/q) -1 + 1/q$, then
$$ P_{DP}(G, k) \geq k^{(n(q+k-2)-qm+1-q)/(k-1)}.$$
\\
(ii)  If $\chi_{DP}(G)\leq k$ and
$m\leq 2n-\frac{k-3}{k-2}$, then
$$
P_{DP}(G, k) \geq k^{((2n-m)(k-2) -(k-3))/(k-1)}.
$$
\end{customthm}

\begin{rmk}
\label{rmk:degree restrictions}
The degree of $k-2$ that we prove in Proposition~\ref{prop:f_pi_for_simple} can not be improved upon in that setup. This is  because of permutations like $\pi = 10234\cdots(p-1)$ of $\mathbb{F}_p$ for prime $p$ together with the ordered pair $(0,0)$ where we want $f_{\pi}(0,0)\neq 0$. This was computationally confirmed for primes $p$ up to $53$. Because this degree is rather high we can only apply Corollary~\ref{thm: bound} to $P_{DP}(G,k)$ for sparse graphs. Particularly our bound for an $n$-vertex, $m$-edge graph $G$ is $m\leq 2n-\frac{k-3}{k-2}$ as presented in Theorem~\ref{thm:using d=k-2 result}(ii).  %Additionally we see that the lower bound we prove for $P_{DP}(G,k)$ in Theorem~\ref{thm:using d=k-2 result}(ii) worsens as $k$ gets larger. See Figure~\ref{fig:bound tightness analysis}. 

In contrast we see in Proposition~\ref{prop: edgecoloring}(i) the best degree associated to a general single edge is
$q = \lfloor k/2 \rfloor$, which is smaller than $k-2$. This allows us to work with a larger family of graphs since the edge bound becomes
$m \leq n(1 + (k-2)/q) -1 + 1/q$. However, the drawback for Theorem~\ref{thm:using d=k-2 result}(i) is that we are working with multigraphs. The multigraph $G'$ is obtained from $G$ by adding $(q - 1)$ parallel edges to each edge outisde a spanning tree. Adding multiedges is likely to increase $\chi_{DP}(G')$ as compared to $\chi_{DP}(G)$, so the theorem only applies for larger $k$-values. How much $\chi_{DP}(G)$ differs from $\chi_{DP}(G')$ has not been studied. 

This is not to say these results could not be improved upon. For example, in Theorem~\ref{thm:using d=k-2 result}(ii), the way we constructed a polynomial to cover an $S$-labeling of $G$ was as a multiplication over the edges, $f=\prod_{e\in E(G)}f_e$. The restriction of degree $k-2$ per edge from Proposition~\ref{prop:f_pi_for_simple} comes from how we are gluing together $f_e$'s while maintaining a global nonzero output of $f$ for some existing DP-coloring. It might be possible to construct $f$ from  larger subgraphs of $G$ than just those from individual edges while maintaining a global nonzero output of $f$ for some existing DP-coloring. 
\end{rmk}

In Theorem~\ref{thm:using d=k-2 result}(ii) we require that the number of colors $k>2$ be a power of a prime. 
We can use Proposition~\ref{prop:f_pi_for_simple} together with Corollary~\ref{thm: bound} very similarly as we did in Theorem~\ref{thm:using d=k-2 result}(ii) for a general $c>2$ number of colors. 
Pick a power of a prime $k\geq c$ and when applying Corollary~\ref{thm: bound} use $A_i=B$ for all $i$ for some fixed $B\subseteq \mathbb{F}_k$ of cardinality $c$. 
%and will construct a polynomial over $\mathbb{F}_k$ exactly as in Lemma~\ref{lem: main}, but we restrict our attention to 
Instead of considering $k$-$S_{\mathbb{F}_k}$-colorings of $G$, we are considering $c$-$S_{\mathbb{F}_k}$-colorings. 
In other words, colorings of $S$-labelings for $S\subseteq S_{\mathbb{F}_k}$ where $S$ contains all permutations of $\mathbb{F}_k$ where elements of $\mathbb{F}_k-B$ are fixed points. 
We  follow the same argument as in Theorem~\ref{thm:using d=k-2 result}(ii) by constructing a polynomial like in Lemma~\ref{lem: main} and then apply Corollary~\ref{thm: bound}. 

%In Figure~\ref{fig:lower bound general c} we illustrate that sometimes Corollary~\ref{cor:bound for general c} gives us a better lower bound for $P_{DP}(G,c)$, but other times Theorem~\ref{thm:using d=k-2 result} gives us a better lower bound for  $P_{DP}(G,c)\geq P_{DP}(G,k)$ using a power of a prime $k$ where $k\leq c$. 
%\tre{(Above paragraph modified to describe argument for Corollary 20)}

\begin{cor}
\label{cor:bound for general c}
Let  $c\geq 2$ and $k=p^r$ be a power of prime with $c\leq k$ and $k>2$. Suppose $G$ is a connected $n$-vertex simple graph with $m$ edges.  If  $\chi_{DP}(G)\leq c$ and
$m \leq \frac{n(c+k-4)}{k-2} -\frac{k-3}{k-2}$, then
$$
P_{DP}(G, c) \geq c^{(n(c+k-4)-(k-2)m-(k-3))/(c-1)}.
$$
\end{cor}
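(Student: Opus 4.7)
The plan is to follow the template of Theorem~\ref{thm:using d=k-2 result}(ii), but to apply the Alon--F\"uredi bound (Corollary~\ref{thm: bound}) over a $c$-element subset of $\F_k$ rather than all of $\F_k$. Fix a subset $B\subseteq\F_k$ with $|B|=c$, and let $(D,\sigma)$ be an arbitrary $S_A$-labeling of $G$ with $|A|=c$; it suffices to produce the claimed lower bound on the number of proper $S_A$-$c$-colorings of $(D,\sigma)$. After identifying $A$ with $B$, I would extend $\sigma$ to an $S_{\F_k}$-labeling $\tilde{\sigma}$ of $G$ by declaring each permutation to fix every element of $\F_k - B$. Then the proper $S_A$-$c$-colorings of $(D,\sigma)$ correspond exactly to the proper $S_{\F_k}$-colorings of $(D,\tilde{\sigma})$ whose image lies in $B$.

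Next I would invoke Corollary~\ref{cor:id_on_tree} on $(D,\tilde{\sigma})$ to obtain an equivalent $S_{\F_k}$-labeling $(D,\tilde{\sigma}')$ whose permutation on every edge of a chosen spanning tree $T$ is the identity, via some tuple $(\tau_1,\dots,\tau_n)\in S_{\F_k}^n$. The equivalence bijection sends proper colorings $\kappa$ of $(D,\tilde{\sigma})$ with $\kappa(v_i)\in B$ for all $i$ to proper colorings $\kappa'$ of $(D,\tilde{\sigma}')$ with $\kappa'(v_i)\in B_i:=\tau_i(B)$, and each $|B_i|=c$. The hypothesis $\chi_{DP}(G)\leq c$ supplies a proper $S_A$-$c$-coloring of $(D,\sigma)$, and hence a proper coloring $\kappa'$ of $(D,\tilde{\sigma}')$ that lies in $\prod_{i=1}^{n} B_i$. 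Feeding $\kappa'$ into the construction of Lemma~\ref{lem: main} applied to $(D,\tilde{\sigma}')$ then produces a polynomial $f\in\mathcal{F}_{G,(D,\tilde{\sigma}')}$ of degree at most $d=(k-2)(m-n+1)+n-1$ that is nonzero at $\kappa'$, so $f$ does not vanish on all of $\prod_{i=1}^n B_i$.

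Finally I would apply Corollary~\ref{thm: bound} with the sets $A_i=B_i$, so that $t=c\geq 2$ and $\sum_{i=1}^n |A_i|=cn$. A direct calculation shows that the hypothesis $m\leq\frac{n(c+k-4)}{k-2}-\frac{k-3}{k-2}$ is equivalent to $cn\geq n+d$, so the corollary applies and yields at least $c^{(cn-n-d)/(c-1)}$ nonzeros of $f$ in $\prod_{i=1}^n B_i$. Substituting the value of $d$ simplifies the exponent to $(n(c+k-4)-(k-2)m-(k-3))/(c-1)$, matching the statement; each such nonzero corresponds under the chain of bijections to a distinct proper $S_A$-$c$-coloring of $(D,\sigma)$, and taking the minimum over all $S_A$-labelings of $G$ gives the stated bound on $P_{DP}(G,c)$. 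The main obstacle is careful bookkeeping: verifying that the extension $\sigma\mapsto\tilde{\sigma}$ and the equivalence $\tilde{\sigma}\mapsto\tilde{\sigma}'$ interact properly with restriction of colorings to the \emph{varying} product set $\prod\tau_i(B)$, and checking that the edge arithmetic aligns precisely with the Alon--F\"uredi condition $S\geq n+d$; the algebraic machinery itself is already supplied by Lemma~\ref{lem: main} and Proposition~\ref{prop:f_pi_for_simple}.
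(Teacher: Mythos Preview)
Your proposal is correct and follows essentially the same route as the paper: extend the $c$-color labeling to an $S_{\F_k}$-labeling by letting the permutations fix $\F_k\setminus B$, normalize along a spanning tree, build the degree-$(k-2)(m-n+1)+n-1$ polynomial from Lemma~\ref{lem: main}, and apply Corollary~\ref{thm: bound} with $c$-element color sets. The one place you work harder than necessary is in tracking the varying sets $B_i=\tau_i(B)$: since the extended permutations $\tilde{\sigma}(e)$ all lie in the subgroup of $S_{\F_k}$ fixing $\F_k\setminus B$ pointwise, the tree-normalization of Corollary~\ref{cor:id_on_tree} can be carried out with each $\tau_i$ in that subgroup, so $\tau_i(B)=B$ and one may simply take $A_i=B$ for all $i$ as the paper does.
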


It can be seen that sometimes Corollary~\ref{cor:bound for general c} gives us a better lower bound for $P_{DP}(G,c)$, but other times Theorem~\ref{thm:using d=k-2 result} gives us a better lower bound for  $P_{DP}(G,c)\geq P_{DP}(G,k)$ using a power of a prime $k$ where $k\leq c$.

\section{S-labelings with Linear Permutations} \label{sec: linear}

In Proposition~\ref{prop:f_pi_for_simple} we proved that for $k=p^r$, a power of a prime, we may need degree at worst $k-2$ to cover an edge labeled with permutation $\pi$ in the $S$-labeling. We discuss in Remark~\ref{rmk:degree restrictions} the drawbacks. If this degree of $k-2$ could be lowered we would be able to apply our theory to more graphs and our lower bound would be better. In this section we restrict our attention to using $S$-labelings where $S$ contains permutations $\pi$ that can be covered by a degree one polynomial. We will now formally define what we mean. 

We say a polynomial $f\in \F_k[x,y]$ {\it covers} a permutation $\pi:\F_k\rightarrow\F_k$ if $f_{\pi}(x,\pi(x))=0$ for all $x\in\F_k$. We will call a permutation {\it $\F_k$-linear} if there exists a degree one polynomial in $f\in \F_k[x,y]$ that covers $\pi$. 

\begin{pro}
\label{prop:linear_permutations_condition}
For any power of a prime  $k=p^r$ a permutation $\pi:\F_k\rightarrow \F_k$ is $\F_k$-linear if and only if there exist  an $a,b\in \F_k$ where $a\neq 0$ and   $\pi(x)=ax+b$. 
\end{pro}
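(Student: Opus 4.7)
The plan is to prove both implications directly; each is essentially a one-step calculation, so the main task is simply to be careful with the definitions.

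For the ``if'' direction, suppose $\pi(x)=ax+b$ with $a\neq 0$. Then the polynomial $f(x,y)=y-ax-b\in\F_k[x,y]$ has degree one and satisfies $f(x,\pi(x))=\pi(x)-ax-b=0$ for every $x\in\F_k$, so $f$ covers $\pi$ and $\pi$ is $\F_k$-linear.

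For the ``only if'' direction, suppose $\pi$ is $\F_k$-linear, so there is a degree one polynomial $f(x,y)=\alpha x+\beta y+\gamma\in\F_k[x,y]$ with $f(x,\pi(x))=0$ for every $x\in\F_k$, where $(\alpha,\beta)\neq (0,0)$. First I would rule out the case $\beta=0$: if $\beta=0$ then $\alpha\neq 0$ and $\alpha x+\gamma=0$ for all $x\in\F_k$, which is impossible when $|\F_k|\geq 2$ since a nonzero linear polynomial in one variable has at most one root. Hence $\beta\neq 0$, and we may solve $\alpha x+\beta\pi(x)+\gamma=0$ to obtain
\[
\pi(x)=-\beta^{-1}\alpha\, x-\beta^{-1}\gamma
\]
for all $x\in\F_k$. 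Setting $a=-\beta^{-1}\alpha$ and $b=-\beta^{-1}\gamma$ gives $\pi(x)=ax+b$. Finally, since $\pi$ is a permutation of $\F_k$ (in particular injective) and $|\F_k|\geq 2$, we must have $a\neq 0$, because otherwise $\pi$ would be the constant map $x\mapsto b$.

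There is no real obstacle here; the only subtlety is remembering to use the fact that $\pi$ is a \emph{permutation} (not merely a function) to conclude $a\neq 0$, and to use $|\F_k|\geq 2$ to rule out the degenerate case $\beta=0$. Both are immediate from the hypotheses.
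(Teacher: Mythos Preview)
Your proof is correct and follows essentially the same approach as the paper's: both directions are handled by direct calculation, ruling out the degenerate coefficient cases and then solving for $\pi(x)$. Your use of $\alpha,\beta,\gamma$ for the coefficients is in fact a bit cleaner than the paper's, which reuses the letters $a,b$ and creates a mild notational clash with the statement.
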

\begin{proof}
First, suppose we have $a,b\in \F_k$ where $a\neq 0$. Note that the function $\pi:\F_k\rightarrow\F_k$ defined by $\pi(x)=ax+b$ is a permutation since there is an inverse function $\pi^{-1}:\F_k\rightarrow\F_k$ defined by $\pi^{-1}(x)=a^{-1}(x-b)$. This permutation $\pi$ is $\F_k$-linear because the degree one polynomial $$f_\pi(x,y)=x-a^{-1}y+a^{-1}b$$ covers $\pi$. We can see that 
$f_\pi(x,\pi(x))=x-a^{-1}(ax+b)+a^{-1}b=0$ for all $x\in\mathbb{F}_k$.

Now suppose that $\pi$ is a $\F_k$-linear permutation that is covered by the degree one polynomial $f(x,y)=ax+by+c$ for some $a,b,c\in\F_k$. First note that $a\neq 0$ and $b\neq 0$. Because if $a=b=0$, then $f$ is not degree one. If $a\neq 0$ but $b=0$, then $f(x,\pi(x))=ax+c$ can only be zero for one $x$-value contradicting the assumption that $f$ covers $\pi$. If $b\neq 0$ but $a=0$, then $f(x,\pi(x))=a\pi(x)+c=0$ for all $x$. This means that $\pi$ is a constant function, another contradiction. We now claim that the permutation $\pi$ must be  $\pi(x)=ab^{-1}x-b^{-1}c$, which satisfies all the conditions needed. This follows because we know that $f(x,\pi(x))=ax+b\pi(x)+c=0$ which implies that $\pi(x)=-b^{-1}(ax+c)$.
\end{proof}

\begin{cor}
For any power of a prime $k$, there are $k(k-1)$ $\F_k$-linear permutations. 
\end{cor}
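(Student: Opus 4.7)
The plan is to apply Proposition~\ref{prop:linear_permutations_condition} directly and count. That proposition establishes a bijective description: a permutation $\pi: \mathbb{F}_k \to \mathbb{F}_k$ is $\mathbb{F}_k$-linear if and only if there exist $a, b \in \mathbb{F}_k$ with $a \neq 0$ such that $\pi(x) = ax + b$ for all $x \in \mathbb{F}_k$. So the task reduces to counting the number of ordered pairs $(a,b) \in \mathbb{F}_k \times \mathbb{F}_k$ with $a \neq 0$, and then verifying that distinct pairs yield distinct permutations.

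For the counting step, the coefficient $a$ ranges over $\mathbb{F}_k \setminus \{0\}$, giving $k-1$ choices, while $b$ ranges freely over $\mathbb{F}_k$, giving $k$ choices. Multiplying yields $k(k-1)$ ordered pairs. For the injectivity step, suppose $\pi_{a,b}(x) = ax + b$ equals $\pi_{a',b'}(x) = a'x + b'$ as functions on $\mathbb{F}_k$. Evaluating at $x = 0$ forces $b = b'$, and then evaluating at $x = 1$ (which is a distinct element of $\mathbb{F}_k$ since $k \geq 2$) forces $a = a'$. Hence the map $(a,b) \mapsto \pi_{a,b}$ from $\{(a,b) \in \mathbb{F}_k^2 : a \neq 0\}$ to the set of $\mathbb{F}_k$-linear permutations is a bijection, which gives the desired count of $k(k-1)$.

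There is no real obstacle here; the corollary is an immediate consequence of the parametrization given by Proposition~\ref{prop:linear_permutations_condition}, and the proof is essentially a one-line counting argument together with a short check that the parametrization is one-to-one.
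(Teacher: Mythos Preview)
Your proof is correct and follows essentially the same approach as the paper: invoke Proposition~\ref{prop:linear_permutations_condition} and count the pairs $(a,b)$ with $a\neq 0$. You additionally spell out the injectivity of $(a,b)\mapsto \pi_{a,b}$, which the paper leaves implicit.
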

\begin{proof}
This follows quickly from Proposition~\ref{prop:linear_permutations_condition} with there being a $\F_k$-linear permutation for each pair $a,b\in\F_k$ with $a\neq 0$. 
\end{proof}

For any $k$, a power of a prime, let $\mathcal{L}_k$ be the collection of linear permutations $\F_k\rightarrow\F_k$. We say a graph  $G$ is \emph{linearly $k$-colorable} if $G$ is $\mathcal{L}_k$-$k$-colorable.   The \emph{linear-chromatic number} of $G$, denoted $\chi_{\mathcal{L}}(G)$, is the smallest $m$ such that $G$ is linearly $m$-colorable.    The \emph{linear color function} of $G$ evaluated at $m$, denoted $P_{\mathcal{L}}(G,m)$, is given by $P_{\mathcal{L}}(G,m)=P_{\mathcal{L}_m}(G,m) $ and is equal the the minimum number of  colorings over all $\mathcal{L}_m$-labelings. Note that, for any $k \in \N$, $$P_{DP}(G, k) \leq P_{\mathcal{L}}(G,k) \leq P(G,k).$$ 

\begin{rmk}
We will note that these definitions are only defined for powers of primes and not for all positive integers. This is because our methods rely on us working over a field. While our definition of linear permutation is easily generalizable to any ring we have not studied or defined the notation of $P_{\mathcal{L}}(G,k)$ when $k$ is not a power of a prime. We lastly note that  there exist generalizations of Theorem~\ref{thm: AandF} to polynomials over rings with certain conditions~\cite{BCPS18}, although, it is not clear how we could apply these generalizations to the study of colorings of $S$-labelings in general.  
%might be weakened to working over an integral domains, we note that there does not exist a ring of order six that is also an integral domain. It is open what the meaning should be for linearly $k$-colorable for non-powers of primes $k$.
\end{rmk}

Similar to Theorem~\ref{thm:using d=k-2 result} we can construct a polynomial  like we did in Lemma~\ref{lem: main} using the polynomials described in Proposition~\ref{prop:linear_permutations_condition}. We then can  apply Corollary~\ref{thm: bound}  to get Theorem~\ref{thm: linear}, which we restate here. 

\begin{customthm}{\bf \ref{thm: linear}}
Let $k$ be a power of a prime. If an $n$-vertex graph $G$ with $m$ edges is $S$-$k$-colorable for some $S\subseteq\mathcal{L}_k$ and $m\leq (k-1)n$, then
$$P_S(G,k)\geq k^{n-\frac{m}{k-1}}$$
and in particular, $P_{\mathcal{L}}(G,k)\geq k^{n-\frac{m}{k-1}}.$
\end{customthm}

%\begin{cor} NOT NEEDED
%Let $k$ be a power of a prime. If an $n$-vertex graph $G$ with $m$ edges is linearly $k$-colorable and $m\leq (k-1)n$, then
%$$P_{\mathcal{L}}(G,k)\geq k^{(kn-n-m)/(k-1)}=k^{n-\frac{m}{k-1}}.$$
%\end{cor}

We conjecture that the linear color function and the DP color function are equal for power of primes $k$. This is trivially true for $k=3$ and has been confirmed by computer for $k\leq 4$ and $n\leq 5$. 

\begin{customconj}{\bf \ref{conj: probablyfalse}}
For $k$, a power of a prime,
$$P_{\mathcal{L}}(G,k)=P_{DP}(G,k).$$
\end{customconj}

%%%%%%%%%%%%%%%%%%%%%%%%%%
\subsection{Colorings of Signed Graphs}
\label{subsec:signed}
One application of $\mathcal{L}_k$-colorings is to colorings of signed graphs.  Signed graphs and their colorings were formally introduced by  Zaslavsky~\cite{Z82} in 1982 (although versions of it have been studied since 1940s), and have been widely studied since then (see a recent survey in \cite{SV21}). 
We will introduce a notation for signed graphs and their coloring that is similar to notation used in Steffen and Vogel~\cite{SV21}.
%M\'{a}\v{c}ajov\'{a},  Raspaud and \v{S}koviera~\cite{MRS}. 
%This modification is so we can more easily talk about colorings of signed graphs and $S$-labelings at the same time. 

A signed graph  $(G,\epsilon)$ is a graph $G$ together with a map $\epsilon:E(G)\rightarrow \{-1,1\}$ that assigns to each edge a sign,  positive or negative.  
The traditional color sets for signed graphs are $M_{2t+1}=\{0,\pm 1,\pm2,\ldots,\pm t\}$ and $M_{2t}=\{\pm 1,\pm2,\ldots,\pm t\}$. 
A {\it coloring of a signed graph} is a map $\kappa:V(G)\rightarrow M_k$ where for any edge $e=uv$ we have that $\kappa(v)\neq\epsilon(e)\kappa(u)$. 
We call these $k$-colorings of $(G,\epsilon)$ and if a $k$-coloring exists we call the signed graph {\it $k$-colorable}. 
%A coloring is {\it zero-free} if zero is not used in the coloring. This means that all $(2t)$-colorings are zero free and only some $(2t+1)$-colorings are zero-free. This distinction emanates from the distinct role zero plays in coloring. 
The {\it chromatic number} $\chi(G,\epsilon)$ of a signed graph $(G,\epsilon)$ is the smallest $k$ such that $(G,\epsilon)$ is $k$-colorable. The {\it chromatic function} of $(G,\epsilon)$ is $P((G,\epsilon),k)$, which is equal to the number of $k$-colorings of $(G,\epsilon)$. 
The chromatic number of a signed graph $(G,\epsilon)$ relates to the chromatic number of its underlying graph $G$ by $\chi(G,\epsilon)\leq 2\chi(G)-1$, a bound that was proven to be tight in~\cite{MRS16}. 

%\tre{(Sam note: New definitions added.) (Hemanshu: Since we use $S$ for $S$-labeling, would it be better to use $\pm$ as subscript for all these?)}
Since most of our results can apply to colorings of signed graphs regardless of the signs chosen on the edges we define the following. The {\it signed chromatic number} $\chi_{\pm}(G)$ of a  graph $G$ is the smallest $k$ such that $(G,\epsilon)$ is $k$-colorable for any choice of $\epsilon$. The {\it signed chromatic function} of a graph $G$, $P_{\pm}(G,k)$, is the minimum $P((G,\epsilon),k)$ over all choices of signed $\epsilon$. Both the signed chromatic number and function sit inbetween the DP chromatic number and function and the classical chromatic number and function, $$\chi(G)\leq \chi_{\pm}(G)\leq \chi_{DP}(G) \text{, and } P_{DP}(G,k)\leq P_{\pm}(G,k)\leq P(G,k).$$
We will now reframe  colorings of a signed graph $(G,\epsilon)$ as colorings of some $S$-labeling of $G$.

\begin{pro}
\label{prop: sign-colorings1}
Given any signed graph $(G,\epsilon)$ with coloring $\kappa:V(G)\rightarrow M_{2t+1}$ there is an associated coloring of an $S$-labeling of $G$ with $A=\mathbb{Z}_{2t+1}$ and $S=\{\text{id},\pi\}$ where $\pi:A\rightarrow A$ sends $a\mapsto -a$.

Similarly, given a signed graph $(G,\epsilon)$ with a zero-free coloring $\kappa:V(G)\rightarrow M_{2t}$ there is an associated coloring of an $S$-labeling of the underlying graph of $G$ with $A=\mathbb{Z}_{2t+1}-\{0\}$ and $S=\{\text{id},\pi\}$ where $\pi:A\rightarrow A$ sends $a\mapsto -a$.
\end{pro}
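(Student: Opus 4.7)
The plan is to construct the $S$-labeling directly from the sign function $\epsilon$, then verify that under a natural identification of the color sets the signed graph coloring condition coincides termwise with the proper $S$-coloring condition. Since $S = \{\mathrm{id}, \pi\}$ with $\pi(a) = -a$ is closed under inverses (indeed $\pi$ is an involution, so $\pi^{-1} = \pi$), Observation~\ref{obs: orient} tells us that the choice of orientation is immaterial. So I would fix any orientation $D$ of the underlying graph of $G$ and define $\sigma : E(D) \to S$ by $\sigma(u,v) = \mathrm{id}$ when $\epsilon(uv) = 1$ and $\sigma(u,v) = \pi$ when $\epsilon(uv) = -1$.

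For the first statement, I would identify $M_{2t+1} = \{0, \pm 1, \ldots, \pm t\}$ with $A = \mathbb{Z}_{2t+1}$ via reduction modulo $2t+1$. Under this identification negation in $M_{2t+1}$ corresponds exactly to negation in $\mathbb{Z}_{2t+1}$, so the signed coloring condition ``$\kappa(v) \neq \epsilon(uv)\,\kappa(u)$ for every edge $uv$'' becomes ``$\kappa(v) \neq \sigma(u,v)(\kappa(u))$ for every $(u,v) \in E(D)$'', which is precisely the definition of a proper $S$-coloring of $(D,\sigma)$. This gives the desired associated coloring of the $S$-labeling.

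For the second statement, I would keep the same identification, this time noting that $M_{2t} = \{\pm 1, \ldots, \pm t\}$ corresponds to $A = \mathbb{Z}_{2t+1} \setminus \{0\}$ under the residue map. The only small point that needs checking is that $\pi$ restricts to a well-defined permutation of this smaller set: since $-a = 0$ in $\mathbb{Z}_{2t+1}$ only when $a = 0$, the map $a \mapsto -a$ preserves $\mathbb{Z}_{2t+1} \setminus \{0\}$ and remains an involution there. A zero-free coloring $\kappa : V(G) \to M_{2t}$ then corresponds to a map $\kappa : V(G) \to A$, and the same term-by-term translation as before shows that it is a proper $S$-coloring of the $S$-labeling $(D,\sigma)$ of the underlying graph.

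There is no real obstacle; the proposition is essentially a dictionary between two equivalent notations for the same combinatorial object. The only mild technicality is verifying that $\pi$ restricts to a permutation of $\mathbb{Z}_{2t+1}\setminus\{0\}$ in the even-order case, which is needed so that the $S$-labeling framework — which requires each $\sigma(u,v)$ to act on the chosen color set $A$ — applies without modification.
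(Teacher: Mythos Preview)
Your proposal is correct and follows essentially the same approach as the paper's proof: define $\sigma$ from $\epsilon$ in the obvious way on an arbitrary orientation, identify the color sets, and observe that the signed coloring condition $\kappa(v)\neq\epsilon(e)\kappa(u)$ is literally the proper $S$-coloring condition $\kappa(v)\neq\sigma(e)(\kappa(u))$. Your version is slightly more careful in justifying that the orientation is immaterial (via Observation~\ref{obs: orient}) and that $\pi$ restricts to a permutation of $\mathbb{Z}_{2t+1}\setminus\{0\}$, but these are details the paper simply leaves implicit.
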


\begin{proof}
Suppose $(G,\epsilon)$ is a signed graph with coloring $\kappa:V(G)\rightarrow M_{2t+1}$. Let $D$ be any digraph of $G$, $A=\mathbb{Z}_{2t+1}$, and $S=\{\text{id},\pi\}$ where $\pi:A\rightarrow A$ sends $a\mapsto -a$. The associated $S$-labeling $(D,\sigma)$ of $G$ has $\sigma:V(G)\rightarrow S$ defined as $\sigma(e)=\text{id}$ if $\epsilon(e)=1$ and $\sigma(e)=\pi$ if $\epsilon(e)=-1$. The coloring $\kappa$ is also a map $\kappa:V(G)\rightarrow \mathbb{Z}_{2t+1}$. Because $\kappa$ was a coloring of the signed graph $(G,\epsilon)$, we know that $\kappa$ is a coloring of $(G,\sigma)$ because $\kappa(v)\neq\epsilon(e)\kappa(u)$ is equivalent to $\kappa(v)\neq\sigma(e)(\kappa(u))$. 

The second part follows because if we replace $M_{2t+1}$ with $M_{2t}$ and  $A=\mathbb{Z}_{2t+1}$ with $A=\mathbb{Z}_{2t+1}-\{0\}$, then the  above argument still holds.
\end{proof}

To use our methods in this paper we need $A$ to be a finite field, which is not true in general in  the above proposition. 
The following corollary of Proposition~\ref{prop: sign-colorings1} completes the reframing to when $A$ is a finite field. 

\begin{cor}
\label{cor:signed are linear}
Let $k$ be a power of an odd prime. 
Given any signed graph $(G,\epsilon)$ with coloring $\kappa:V(G)\rightarrow M_{k}$ there is an associated coloring of an $S$-labeling of $G$ with $A=\mathbb{F}_{k}$ and $S=\{\text{id},\pi\}$ where $\pi:A\rightarrow A$ sends $a\mapsto -a$.

Let $k=2^r$. Given a signed graph $(G,\epsilon)$ with a zero-free coloring $\kappa:V(G)\rightarrow M_{k}$ there is an associated coloring of an $S$-labeling of  $G$ with $A=\mathbb{F}_{k}$ and $S=\{\text{id},\pi\}$ where $\pi:A\rightarrow A$ sends $a\mapsto a+1$.
\end{cor}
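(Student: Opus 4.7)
The plan is to reduce Corollary~\ref{cor:signed are linear} to Proposition~\ref{prop: sign-colorings1} by reidentifying the ambient color set. Given an $S$-labeling $(D,\sigma)$ defined, as in the proof of Proposition~\ref{prop: sign-colorings1}, by $\sigma(e)=\mathrm{id}$ if $\epsilon(e)=1$ and $\sigma(e)=\pi$ if $\epsilon(e)=-1$, I would construct a bijection $\phi: M_k \to \mathbb{F}_k$ that intertwines the negation on $M_k$ (the involution governing signed colorings) with the stated involution $\pi$ on $\mathbb{F}_k$. Once $\phi$ has this intertwining property, for each edge $uv$ with $\epsilon(e)=1$ the condition $\kappa(v)\neq \kappa(u)$ becomes $\phi(\kappa(v))\neq \mathrm{id}(\phi(\kappa(u)))$, while for $\epsilon(e)=-1$ the condition $\kappa(v)\neq -\kappa(u)$ becomes $\phi(\kappa(v))\neq \pi(\phi(\kappa(u)))$. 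Thus $\phi\circ\kappa$ is a proper coloring of the $S$-labeling $(D,\sigma)$ of $G$.

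For $k=p^r$ with $p$ an odd prime, the stated involution on $\mathbb{F}_k$ is $\pi(a)=-a$. Since $\mathrm{char}(\mathbb{F}_k)$ is odd, $\pi$ fixes exactly one element, $0$, and partitions $\mathbb{F}_k$ into $\{0\}$ together with $t=(k-1)/2$ two-element orbits $\{a,-a\}$. The negation involution on $M_k=\{0,\pm 1,\ldots,\pm t\}$ has the identical orbit structure. I would thus set $\phi(0)=0$, choose any bijection between the $t$ orbits on either side, and for each such matched pair set $\phi(i)=a$ and $\phi(-i)=-a$. By construction $\phi(-x)=-\phi(x)$ for all $x\in M_k$, which is exactly the intertwining identity needed.

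For $k=2^r$ and a zero-free coloring, the stated involution on $\mathbb{F}_k$ is $\pi(a)=a+1$. In characteristic $2$ this is a fixed-point-free involution, so it partitions $\mathbb{F}_k$ into $t=k/2$ orbits $\{a,a+1\}$. The negation involution on $M_k=\{\pm 1,\ldots,\pm t\}$ is likewise fixed-point-free with $t$ orbits $\{i,-i\}$. I would match the orbits arbitrarily and, for each matched pair, set $\phi(i)=a$ and $\phi(-i)=a+1$. Then $\phi(-x)=\pi(\phi(x))$, giving the intertwining identity in this case. Combined with the coloring-condition check in the first paragraph, this completes the proof in both cases.

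The only potential obstacle is the existence of the intertwining bijection, but this reduces purely to matching orbit structures: in each case the two involutions have the same number of fixed points (one or zero, matching the parity) and the same total cardinality, hence the same number of $2$-orbits. Beyond that, every step is a direct translation between two ways of encoding the same discrete data, so no further subtlety arises.
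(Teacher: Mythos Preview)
Your proof is correct and follows essentially the same approach as the paper: in both cases the argument reduces to producing a bijection between the signed-graph color set and $\mathbb{F}_k$ that intertwines negation with the stated involution $\pi$, and this is justified by matching orbit structures of the two involutions. The only minor difference is in the even case: the paper first takes an arbitrary bijection $\psi:\mathbb{Z}_{k+1}\setminus\{0\}\to\mathbb{F}_k$, obtains the conjugated involution $\pi'=\psi\circ(a\mapsto -a)\circ\psi^{-1}$, and then invokes Lemma~\ref{lem:equivalent S-labeling}(ii) to pass to $a\mapsto a+1$ since $\pi'$ and $a\mapsto a+1$ have the same cycle type; you instead choose $\phi$ from the outset to intertwine directly with $a\mapsto a+1$, which is a slight streamlining but the same idea.
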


\begin{proof}
The first part of this proof follows almost directly from Proposition~\ref{prop: sign-colorings1}, but due to the structural differences between $\Z_k$ and $\F_k$ there is some work to do.
Let $k$ be a power of an odd prime, $(G,\epsilon)$ be a signed graph with a $k$-coloring $\kappa:V(G)\rightarrow M_{k}$, and $D$ be any digraph of $G$.  By Proposition~\ref{prop: sign-colorings1} we know there is an associated coloring of an $S$-labeling $(D,\sigma)$ of  $G$ with $A=\mathbb{Z}_{k}$ and $S=\{\text{id},\pi\}$ where $\pi:A\rightarrow A$ sends $a\mapsto -a$ where the associated coloring is $\kappa$ viewed as $\kappa:V(G)\rightarrow A$. 
Let $A'=\F_k$, $S'=\{\text{id},\pi'\}$ where $\pi':A'\rightarrow A'$ sends $a\mapsto -a$ and $\sigma':E(G)\rightarrow S'$ be the same as $\sigma$. We mean let $\sigma'(e)=\pi'$ if $\sigma(e)=\pi$ and $\sigma'(e)=\text{id}$ if $\sigma(e)=\text{id}$.   
Note that $\kappa$ does not generalize to a coloring with color set $\F_k$. 
Let $\psi:A\rightarrow\F_k$ be a bijection between these two sets of equal cardinality such that $\psi(-a)=-\psi(a)$. Such a bijection exists since additive inverse of $\Z_k$ and $\F_k$ come in pairs expect for zero. Consider the coloring $\kappa':V(G)\rightarrow \F_k$ defined by $\kappa'(v)=\psi\circ\kappa(v)$. This is the coloring of the $S'$labeling of $(D,\sigma)$ since for any edge $uv$ we have that $\kappa(u)\neq \sigma(uv)(\kappa(v))$ if and only if $\kappa'(u)\neq \sigma'(uv)(\kappa'(v))$.

The second part will need a little more work. By proposition Proposition~\ref{prop: sign-colorings1} we have the following. Let $k=2^r$ be a power of an even prime and $(G,\epsilon)$ be a signed graph with a zero-free coloring $\kappa:V(G)\rightarrow M_{k}$. We know there is an associated coloring of an $S$-labeling $(D,\sigma)$ of  $G$ with $A=\mathbb{Z}_{k+1}-\{0\}$ and $S=\{\text{id},\pi\}$ where $\pi:A\rightarrow A$ sends $a\mapsto -a$ where the associated coloring is $\kappa$ viewed as $\kappa:V(G)\rightarrow A$. 

Now we want to translate everything to $A'=\F_k$. Let $\psi:A\rightarrow\F_k$ be any bijection between these two sets of equal cardinality. Let $\pi':\F_k\rightarrow\F_k$ be the associated permutation defined by $\psi$ from $\pi$, $a\mapsto \psi\circ\pi\circ\psi^{-1}(a)$. Let  $S'=\{\text{id},\pi'\}$ and the $S'$-labeling $(D,\sigma')$ be defined similarly to $(D,\sigma)$ that $\sigma':V(G)\rightarrow S$ is defined as $\sigma'(e)=\text{id}$ if $\epsilon(e)=1$ and $\sigma'(e)=\pi'$ if $\epsilon(e)=-1$. 
Now let us define a coloring of $(D,\sigma')$. Define $\kappa':V(G)\rightarrow \mathbb{F}_k$ as $\kappa'(v)=\psi\circ\kappa(v)$. Because $\kappa$ was a coloring of the signed graph $(G,\epsilon)$, we know that $\kappa'$ is a coloring of $(D,\sigma')$ because $\kappa(v)\neq \sigma(e)(\kappa(u))$ for an edge $e=uv$ is equivalent to $\kappa'(v)\neq\sigma'(e)(\kappa'(u))$. 

Consider the permutation $\tau:\F_k\rightarrow\F_k$ where $a\mapsto a+1$. Recall that $A=\F_k$ can be viewed as $\mathbb{F}_2[x]/(p(x))$ where $p(x)\in\mathbb{F}_2[x]$ is an irreducible polynomial of degree $r$. This shows that the  permutation $\tau$ has $k/2$ two-cycles just like the permutation $\pi'$ and $\pi$ that all have the same cycle type. By Lemma~\ref{lem:equivalent S-labeling}(ii) we are done because $\pi'$ and $\tau$ are in the same conjugacy class. 
\end{proof}

A lot of the work we had to do was to not just translate signed graphs into $S$-labelings, but to  translate colorings of signed graphs into colorings of $S$-labelings where $S$ contains only linear permutations. This is what we have completed in  the corollary above. Now we can apply our methods from this paper.

% \begin{customthm} {\bf \ref{thm:signed lower bound}}
% Let $(G,\epsilon)$ be an $n$-vertex signed graph with $m$ edges. 
% Let $k=p^r$ be a power of a prime.  If   $\chi(G,\epsilon)\leq k$ and $m\leq (k-1)n$, then
% $$P((G,\epsilon),k)\geq k^{(kn-n-m)/(k-1)}=k^{n-\frac{m}{k-1}}.$$
% \end{customthm}
\begin{thm} 
\label{thm:signed lower bound}
Let $(G,\epsilon)$ be an $n$-vertex signed graph with $m$ edges. 
Let $k=p^r$ be a power of a prime.  If   $\chi(G,\epsilon)\leq k$ and $m\leq (k-1)n$, then
$$P((G,\epsilon),k)\geq k^{n-\frac{m}{k-1}}.$$
\end{thm}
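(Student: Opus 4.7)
The plan is to reduce to Theorem~\ref{thm: linear} by observing that the natural permutations arising in signed colorings are $\F_k$-linear. Split into two cases based on the parity of $p$. For $k = p^r$ with $p$ odd, apply the first part of Corollary~\ref{cor:signed are linear} to associate $(G,\epsilon)$ with an $S$-labeling $(D,\sigma)$ of $G$ where $A = \F_k$ and $S = \{\mathrm{id}, \pi\}$ with $\pi(a) = -a$. For $k = 2^r$, the standard color set $M_k$ is automatically zero-free, so the second part of Corollary~\ref{cor:signed are linear} produces an $S$-labeling $(D,\sigma)$ with $S = \{\mathrm{id},\pi\}$ where $\pi(a) = a+1$. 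In both cases $\pi$ is of the form $ax + b$ with $a \neq 0$, so by Proposition~\ref{prop:linear_permutations_condition}, $\pi \in \mathcal{L}_k$ and hence $S \subseteq \mathcal{L}_k$.

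Next, observe that the bijections $\psi: M_k \to \F_k$ (or its zero-free analogue) used in the proof of Corollary~\ref{cor:signed are linear} induce a bijection between $k$-colorings of $(G,\epsilon)$ and proper $S$-colorings of $(D,\sigma)$, by mapping $\kappa \mapsto \psi \circ \kappa$. Consequently
\[
P((G,\epsilon), k) = P_S(G, k, (D,\sigma)).
\]
Since $\chi(G,\epsilon) \leq k$, the signed graph admits at least one $k$-coloring, so $(D,\sigma)$ admits at least one proper $S$-coloring. The proof of Theorem~\ref{thm: linear} proceeds per $S$-labeling via the template of Lemma~\ref{lem: main}: given any specific $(D,\sigma)$ with $S \subseteq \mathcal{L}_k$ that has a proper $S$-coloring, one constructs a polynomial $f_{(D,\sigma)} \in \mathcal{F}_{G,(D,\sigma)}$ of degree at most $m$ (using the degree one covering polynomials from Proposition~\ref{prop:linear_permutations_condition} for each edge), and then Corollary~\ref{thm: bound} with $A_i = \F_k$ for all $i$ yields at least $k^{n - m/(k-1)}$ nonzeros, provided $m \leq (k-1)n$.

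Chaining these observations produces the required bound $P((G,\epsilon),k) \geq k^{n - m/(k-1)}$. The main technical subtlety is merely bookkeeping: handling the two parity cases separately, since $a \mapsto -a$ collapses to the identity in characteristic $2$ and must be replaced by the conjugate-equivalent permutation $a \mapsto a+1$ which has the correct cycle type (this conjugacy, used in the proof of Corollary~\ref{cor:signed are linear} via Lemma~\ref{lem:equivalent S-labeling}(ii), preserves the count of proper colorings). There is no genuine obstacle beyond verifying that our specific $(D,\sigma)$ satisfies the hypotheses of the per-labeling version of Theorem~\ref{thm: linear}, which is immediate from the reduction.
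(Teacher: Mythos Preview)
Your proposal is correct and follows essentially the same approach as the paper: reduce via Corollary~\ref{cor:signed are linear} to an $S$-labeling with $S=\{\mathrm{id},\pi\}\subseteq\mathcal{L}_k$ (handling the odd and even prime-power cases separately), then invoke Theorem~\ref{thm: linear}. Your write-up is slightly more explicit about the bijection $\kappa\mapsto\psi\circ\kappa$ and about applying the argument to the specific labeling $(D,\sigma)$ rather than the minimum $P_S(G,k)$, but this is just added care rather than a different route.
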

\begin{proof} Let $(G,\epsilon)$ be an $n$-vertex signed graph with $m$ edges, $\chi(G,\epsilon)\leq k$,  $m\leq (k-1)n$ and $k$ be a power of a prime. By Corollary~\ref{cor:signed are linear} the colorings of the signed graph $(G,\epsilon)$ are in bijection colorings of an $S$-labeling where $S=\{\text{id}, \pi\}$ for some $\pi$ determined by the pairity of $k$.
Note that for $k$, a power of an odd prime, that $\pi:\F_k\rightarrow \F_k$ that sends $a\mapsto -a$ is a $\F_k$-linear permutation. Also, note that for $k$, a power of an even prime, that $\pi:\F_k\rightarrow \F_k$ that sends $a\mapsto a+1$ is a $\F_k$-linear permutation since  $\F_k$ can be viewed as $\mathbb{F}_2[x]/(p(x))$ where $p(x)\in\mathbb{F}_2[x]$ is an irreducible polynomial in $\F_2[x]$. 
This makes $S\subseteq \mathcal{L}_k$ whether $k$ is even or odd. By Theorem~\ref{thm: linear} we have our result. %\tre{(Sam Note: We aren't actually needing $S$-$k$-colorable, but just $(D,\sigma)$ is $k$-colorable, unless I am mixing something up. This makes the result a bit stronger.)}
\end{proof}

We distinguish Theorem~\ref{thm:signed lower bound} from its corollary below because Theorem~\ref{thm:signed lower bound} might apply to a signed graph $(G,\epsilon)$ for much smaller $k$ as compared to its underlying graph $G$. This is because $\chi((G,\epsilon),k)$ might be far from  $\chi_{\pm}(G,k)$. For example, if all edges in $(G,\epsilon)$ are negative then $\chi((G,\epsilon),k)= 2$, so for these signed graphs Theorem~\ref{thm:signed lower bound} applies starting at $k\geq 2$.

\begin{customcor}{\bf \ref{cor:signed lower bound}}
Let $G$ be an $n$-vertex signed graph with $m$ edges. 
Let $k$ be a power of a prime.  If  $\chi_{\pm}(G)\leq k$ and $m\leq (k-1)n$, then 
$$P_{\pm}(G,k)\geq k^{n-\frac{m}{k-1}}.$$
\end{customcor}

%%%%%%%%%%%%%%%%%%%%%%%%%%%%%%%%%
\section{Exponentially Many Colorings  of Planar Graphs}\label{planar}

The history of coloring of planar graphs and its subfamilies, is intertwined with the related conjectures on existence of exponentially many such colorings \footnote{In this section, $n$ will always denote the number of vertices of the graph under consideration, and when we say exponential, we mean exponential in $n$.}. In 1946, giving an enumerative extension of the four color conjecture, Birkhoff and Lewis~\cite{BL46} conjectured that for any $n$-vertex planar graph $G$, $P(G,k) \ge k(k - 1)(k-2)(k-3)^{n-3}$ for all real numbers $k \ge 4$ \footnote{Note that Birkhoff and Lewis Conjecture does not claim there are exponentially many 4-colorings of planar graphs.}. They proved this is true for $k\ge 5$ which gives exponentially many 5-colorings of planar graphs (see~\cite{T06} for a simpler proof). Appel and Haken~\cite{AH77, AH772} famously proved the Four Color Theorem in 1976. After Thomassen~\cite{T94} in 1994 proved all planar graphs are 5-choosable, it was asked whether there exponentially many 5-list-colorings of planar graphs. In 2007, Thomassen~\cite{T07b} proved that this is true by showing $P_{\ell}(G,5) \ge 2^{n/9}$ when $G$ is a planar graph, and correspondingly for DP-colorings, Postle and Smith-Roberge~\cite{PS23} recently showed that $P_{DP}(G,5) \ge 2^{n/67}$. Recall that for any $k \in \N$, $P_{DP}(G, k) \leq P_\ell(G,k) \leq P(G,k)$, and $P_{DP}(G, k) \leq P_{\pm}(G,k) \leq P(G,k)$. Below, using Corollary~\ref{cor:signed lower bound}, we will show that $P_{\pm}(G,5)$ is exponential, with a better bound, for all planar graphs $G$, extending the sharp bound $\chi_{\pm}(G) \le 5$ from 2016~\cite{MRS16}.

The question of number of colors needed for sparse planar graphs, where sparsity is controlled by forbidding short cycles, also has a long history. For planar graphs of girth 5, Thomassen~\cite{T95} proved that they are 3-choosable, and $P_{\ell}(G,3) \ge 2^{n/1000}$~\cite{T07} for any such graph $G$. This was recently improved by Postle and Smith-Roberge~\cite{PS23} to $P_{DP}(G,3) \ge 2^{n/282}$. All of these results use intricate and long topological arguments. In a previous paper~\cite{DKM22}, we improved this further to $P_{DP}(G,3) \ge 3^{n/6}$. Now, using the technique outlined in Theorem~\ref{thm:using d=k-2 result}, we can show there are exponentially many such DP-colorings for many such families of sparse planar graphs. 

In principle, we can guarantee exponentially many DP-colorings for any family of $n$-vertex graphs whose DP-chromatic number is known and whose number of edges is bounded above by $(2-c)n$ for some $c>0$. This required bound on number of edges can be weakened as described in Theorem~\ref{thm:using d=k-2 result}(i) if we know the DP-chromatic number of a certain multigraph containing $G$, which is a topic of future research. More dramatically, this bound can be weakened to $(k-1)n$ in case of $k$-colorings of signed graphs as shown in Theorem~\ref{thm: linear}.  In the rest of this section, we illustrate some of these applications.

We start with an easy application to embedded graphs whose number of edges can be bounded directly using Euler's formula.

\begin{thm}\label{thm: Planar1}
   Let $G$ be an $n$-vertex graph of girth at least 5 embedded on a surface of Euler genus $g$. Suppose $G$ is DP-$k$-colorable where $k$ is a power of a prime and $k > 2$. If $n \ge 5g$, then $P_{DP}(G,k)\geq k^{(((n-5g)(k-2)/3) - (k-3))/(k-1)}$.  
\end{thm}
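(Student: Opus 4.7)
The plan is to derive this theorem as a direct consequence of Theorem~\ref{thm:using d=k-2 result}(ii). Since $\chi_{DP}(G) \leq k$ is given by hypothesis, all that is really needed is an upper bound on $m$ supplied by Euler's formula combined with the girth condition, followed by a straightforward manipulation of the resulting exponent.

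First I would bound $m$ by the standard argument for sparse embedded graphs. Assume for the moment that $G$ is connected and $2$-cell embedded on the surface. If $f$ denotes the number of faces, Euler's formula gives $n - m + f \geq 2 - g$. Because the girth of $G$ is at least $5$, every face boundary walk has length at least $5$, while each edge lies on at most two face boundaries, so $5f \leq 2m$. Combining these inequalities yields the familiar bound
\[
m \;\leq\; \tfrac{5}{3}(n + g - 2),
\]
from which
\[
2n - m \;\geq\; 2n - \tfrac{5}{3}(n + g - 2) \;=\; \tfrac{1}{3}(n - 5g + 10) \;\geq\; \tfrac{1}{3}(n - 5g).
\]
The hypothesis $n \geq 5g$ is what makes this lower bound, and hence the final estimate, nontrivial.

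Now I would invoke Theorem~\ref{thm:using d=k-2 result}(ii); as the authors remark immediately after its statement, the bound on $m$ in that theorem may be dropped at the cost of a possibly trivial conclusion, which is anyway automatic from $\chi_{DP}(G) \leq k$. Thus
\[
P_{DP}(G,k) \;\geq\; k^{((2n-m)(k-2)-(k-3))/(k-1)} \;\geq\; k^{((n-5g)(k-2)/3 - (k-3))/(k-1)},
\]
using that $x \mapsto ((k-2)x - (k-3))/(k-1)$ is increasing in $x$ for $k > 2$. The only subtlety I anticipate is handling the case where $G$ is disconnected or its embedding is not $2$-cell: this is routine, since one applies the argument componentwise using $P_{DP}(G,k) = \prod_i P_{DP}(G_i, k)$ together with the observation that the per-component edge bounds sum correctly and that $k \geq 3$ makes the aggregated estimate at least as strong as the stated bound. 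No serious obstacle is expected beyond this bookkeeping; the heart of the proof is simply feeding the Euler-formula edge count into the previously established main theorem.
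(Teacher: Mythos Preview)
Your proposal is correct and follows essentially the same approach as the paper: bound $m \leq (5n-10+5g)/3$ via Euler's formula and the girth-$5$ condition, then feed this into Theorem~\ref{thm:using d=k-2 result}(ii). The paper's own proof is a one-line sketch to exactly this effect; your additional care about connectedness and $2$-cell embeddings is not in the paper but causes no harm.
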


By Euler's formula, the number of edges in such a graph is bounded by $(5n-10+5g)/3$. This edge bound together with Theorem~\ref{thm:using d=k-2 result}(ii) proves the statement above.

Theorem~\ref{thm: Planar1} generalizes a 2007 result of Thomassen~\cite{T07} that such graphs have $P(G,3) \ge 2^{(n-5g)/9}$. For a planar graph of girth at least 5, $G$, we know $\chi_{DP}(G) \le 3$, and Theorem~\ref{thm: Planar1} reduces to $P_{DP}(G,3) \ge 3^{n/6}$ which we proved in a previous paper~\cite{DKM22}.\\

Next, we consider families of sparse planar graphs where bounding the number of edges requires more than a simple application of Euler's formula. Recall that \emph{maximum average degree} of a graph $G$ is defined to be $mad(G) = \max\{2|E(H)|/|V(H)| : H \subseteq G\}$.  Since $mad(G) \ge 2|E(G)|/|V(G)|$, an upper bound on  $mad(G)$ gives us an upper bound on the number of edges of $G$. Using a standard discharging argument (see Exercise~3.21 in Cranston and West~\cite{CW17}), we get an upper bound $3 + 9/(2t-1)$ on the maximum average degree of planar graphs having no cycle of length in $\{4,...,t\}$, and then combining that bound with Euler's formula leads to the following fact.

\begin{fact}\label{fact: planar}
    Let G be an $n$-vertex plane graph having no cycle of length in $\{4,...,t\}$. Then, $|E(G)| < (n-2)(3t+3)/(2t-1)$. 
\end{fact}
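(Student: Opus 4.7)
The plan is to combine Euler's formula with a standard edge-face incidence count, where the key non-trivial step is the local observation that no two triangular faces of $G$ can share an edge.

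I would first reduce to the case where $G$ is connected with $n \ge 3$; the remaining cases (isolated vertices, trees, smaller components) are handled component-by-component and can only improve the inequality, since each additional component effectively decreases the right-hand side. For a connected plane graph, Euler's formula gives $f = m - n + 2$, where $f$ is the number of faces. The hypothesis that $G$ contains no cycle of length in $\{4, \ldots, t\}$ implies that the boundary walk of every face of $G$ has length either $3$ or at least $t+1$ (any triangular face is bounded by a genuine $3$-cycle, and any longer face length would come from a cycle of length $\ge t+1$ in the boundary since a chord could not shorten it into the forbidden range). Writing $f = f_3 + f_{\ge t+1}$ and summing lengths yields
\[
2m \;=\; \sum_f \ell(f) \;\ge\; 3 f_3 + (t+1) f_{\ge t+1}.
\]

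Next, I would establish the local claim that each edge of $G$ lies on at most one triangular face, so that $3 f_3 \le m$. Indeed, if two distinct triangular faces $ABC$ and $ABD$ shared an edge $AB$, then $C \ne D$, and the edges $AC$, $CB$, $BD$, $DA$ together form the $4$-cycle $A$-$C$-$B$-$D$-$A$, contradicting the forbidden-cycle hypothesis (since $4 \in \{4, \ldots, t\}$ once $t \ge 4$, which is implicit in the statement).

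Finally, I would combine these ingredients. Using $f_{\ge t+1} = f - f_3$ rewrites the incidence bound as $(t+1) f \le 2m + (t-2) f_3$, and inserting $f_3 \le m/3$ gives
\[
(t+1) f \;\le\; 2m + \tfrac{(t-2) m}{3} \;=\; \tfrac{(t+4) m}{3}.
\]
Substituting $f = m - n + 2$ and clearing denominators yields $3(t+1)(m - n + 2) \le (t+4) m$, which rearranges to $(2t-1) m \le 3(t+1)(n-2)$, i.e., $m \le (3t+3)(n-2)/(2t-1)$. The strict inequality claimed in the fact can be recovered by noting that simultaneous equality in the two estimates forces a rigid structure (every edge on exactly one triangular face and every non-triangular face of length exactly $t+1$) which is delicate to realize, or, more simply, by picking up slack from the component-wise reduction.

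The main obstacle, and the only place the forbidden-cycle hypothesis is genuinely used, is the local triangular-face claim; the rest is Euler's formula and bookkeeping.
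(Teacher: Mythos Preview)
Your argument has two genuine gaps. First, the face-length dichotomy (every face has length $3$ or at least $t+1$) only holds when each face boundary is a cycle, i.e., when $G$ is $2$-connected; in a merely connected plane graph a face may be bounded by a closed walk that revisits a cut vertex. The bowtie (two triangles sharing one vertex) with $t\ge 6$ already breaks it: the outer face has length~$6$ while the graph contains only $3$-cycles. Second, in your adjacent-triangles step the assertion $C\ne D$ is unjustified and in fact false for $K_3$, where both faces are bounded by the \emph{same} triangle, so every edge lies on two triangular faces and $3f_3\le m$ fails. These are not edge cases you can sweep into the ``component-wise reduction'': $K_3$ is a counterexample to the stated inequality itself, since $m=3$ but $(n-2)(3t+3)/(2t-1)=(3t+3)/(2t-1)\le 15/7<3$ for every $t\ge 4$.

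The paper does not give a self-contained proof; it cites a discharging exercise in Cranston--West for $\mathrm{mad}(G)\le 3+9/(2t-1)$, which gives $|E(G)|\le n\,(3t+3)/(2t-1)$ --- the version with $n$ rather than $n-2$, and this is what the subsequent applications in the paper actually use. Your face-counting is essentially the face-based discharging behind that exercise and is correct once you assume $G$ is $2$-connected and not a single cycle (so that face boundaries are pairwise distinct cycles). To get a statement valid for all planar $G$ you would pass to blocks and handle bridges and $K_3$-blocks separately, but that repair naturally leads to the $n$-bound, not the $(n-2)$-bound you were asked to prove.
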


We are now ready to prove the following exponential lower bounds on DP-colorings of sparse planar graphs.

\begin{thm}\label{thm: Planar2}
    Let $G$ be an $n$-vertex planar graph. The following statements are true whenever $k > 2$ and $k$ is a power of a prime.
    \begin{enumerate}[(i)]
    \item  If $G$ has no cycle of length in $\{4,5,6,7,8\}$, then $P_{DP}(G,k) \ge k^{((n/5)(k-2)/(k-1)) - 1}$ for all $k\ge \chi_{DP}(G)$~\footnote{It is only known that $\chi_{DP}(G) \in \{3,4\}$, see \cite{DP15}.}.
    
    \item If $G$ has no cycle of length in $\{4,5,6,9\}$, then $P_{DP}(G,k) \ge k^{((n/11)(k-2)/(k-1)) - 1} $ for all $k\ge 3$. In particular, $P_{DP}(G,3) \ge 3^{(n/22) - 1} $.
    
    \item If $G$ has no intersecting triangles and no cycle of length in $\{4,5,6,7\}$, then $P_{DP}(G,k) \ge k^{((2n/13)(k-2)/(k-1)) - 1}$ for all $k\ge 3$. In particular, $P_{DP}(G,3) \ge 3^{(n/13) - 1} $.
    
    \item If $G$ has no cycle of length in $\{4,5,6\}$, then $P_{DP}(G,k) \ge k^{((n/11)(k-2)/(k-1)) - 1} $ for all $k\ge 4$. In particular, $P_{DP}(G,4) \ge 3^{(2n/33) - 1} $.
    
    \item If $G$ has no cycle of length in $\{4,5,7,9\}$, then $P_{DP}(G,k) \ge k^{((2n/13)(k-2)/(k-1)) - 1} $ for all $k\ge 3$. In particular, $P_{DP}(G,3) \ge 3^{(n/22) - 1} $.
\end{enumerate}
\end{thm}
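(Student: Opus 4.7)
The plan is to apply Theorem~\ref{thm:using d=k-2 result}(ii) in each of the five cases. For each family of planar graphs under consideration, the strategy reduces to verifying the two hypotheses of that theorem: a bound $\chi_{DP}(G) \le k$ and an edge bound $m \le 2n - (k-3)/(k-2)$. Once both hold, the conclusion is a substitution into the lower bound $k^{((2n-m)(k-2) - (k-3))/(k-1)}$; a tighter upper bound on $m$ translates directly into a larger exponent, so the task decomposes into (a) citing the appropriate DP-chromatic number result for each family, and (b) establishing or citing a sharp enough bound on $|E(G)|$.

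For parts (i) and (iv), the forbidden cycle sets are of the consecutive form $\{4,\ldots,t\}$ with $t=8$ and $t=6$ respectively, so Fact~\ref{fact: planar} applies directly. For $t=8$ I would combine $|E(G)| < 9(n-2)/5$ with the hypothesis $k\ge \chi_{DP}(G)$ (noting $\chi_{DP}(G)\in\{3,4\}$ by \cite{DP15}) to obtain $2n - m > (n+18)/5$, from which the stated lower bound in (i) follows after simplification, with the constant $18(k-2)/5 - (k-3)$ absorbed into the subtracted $1$ in the exponent. For $t=6$ in (iv), Fact~\ref{fact: planar} gives $2n - m > (n+42)/11$; combined with the known DP-4-colorability of planar graphs avoiding $\{4,5,6\}$-cycles, this delivers the bound in (iv).

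For parts (ii), (iii), and (v), the forbidden cycle sets are not of the form $\{4,\ldots,t\}$, so Fact~\ref{fact: planar} does not apply directly. Here I would appeal to specialized discharging-based edge bounds from the planar-graph coloring literature (results of Borodin and coauthors, among others) tailored to each restriction: for (ii), a bound yielding $2n - m \ge (n + O(1))/11$; for (iii) and (v), bounds yielding $2n - m \ge (2n + O(1))/13$. For the DP-chromatic number hypothesis, parts (ii), (iii), (v) require $\chi_{DP}(G) \le 3$ while (iv) requires $\chi_{DP}(G) \le 4$; the requisite 3-DP-colorability and 4-DP-colorability results for each specific forbidden-cycle family are available in the DP-coloring literature and would be cited. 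The ``In particular'' specializations at $k=3$ or $k=4$ then follow by direct substitution into the general bound.

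The main obstacle is locating, and where necessary adapting, the precise discharging-based edge counts and the matching DP-chromatic number results for the three non-consecutive forbidden cycle families in (ii), (iii), (v). Each of these relies on a discharging scheme engineered to the particular pattern of forbidden cycle lengths, and the corresponding DP-coloring guarantees are dispersed across the literature. Once the edge bound and the DP-chromatic number guarantee are in hand for a given family, the application of Theorem~\ref{thm:using d=k-2 result}(ii) is purely computational.
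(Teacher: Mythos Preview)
Your overall strategy matches the paper's: apply Theorem~\ref{thm:using d=k-2 result}(ii) with an edge bound and a DP-chromatic number bound for each family. However, you over-complicate parts (ii) and (iii). Fact~\ref{fact: planar} requires only that $G$ have no cycle of length in $\{4,\ldots,t\}$; it does not require this to be the \emph{exact} forbidden set. Since $\{4,5,6,9\}$ in (ii) contains $\{4,5,6\}$, Fact~\ref{fact: planar} with $t=6$ applies directly and gives $|E(G)|<(21/11)(n-2)$, hence $2n-m>n/11$. Similarly, the forbidden set in (iii) contains $\{4,5,6,7\}$, so $t=7$ gives $|E(G)|<(24/13)(n-2)$ and $2n-m>2n/13$. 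No specialized discharging results from the literature are needed for these two parts.

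Only part (v) genuinely requires a separate argument, since $\{4,5,7,9\}$ does not contain any $\{4,\ldots,t\}$ with $t\ge 6$; taking $t=5$ yields merely $|E(G)|<2n$, which is too weak for Theorem~\ref{thm:using d=k-2 result}(ii). The paper handles (v) via a short discharging argument (attributed to Cranston) showing that the average face length is at least $4+4/11$, which gives $|E(G)|<(24/13)n$. Your proposal to invoke an ad hoc discharging bound for (v) is correct in spirit; the paper simply makes the target explicit. The DP-chromatic number inputs you identify are the right ones and are supplied by \cite{LLYY19} for (ii) and (v), \cite{L22} for (iii), and \cite{KO18} for (iv).
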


\begin{proof}
Each of these results follows from Theorem~\ref{thm:using d=k-2 result}(ii) using the following observations.

(i) Applying Fact~\ref{fact: planar} with $t=8$ to $G$ gives $|E(G)| < (9/5) n$.

(ii) It is known that $\chi_{DP}(G) \le 3$ (\cite{LLYY19}). Applying Fact~\ref{fact: planar} with $t=6$ to $G$ gives $|E(G)| < (21/11) n$. 

(iii) It is known that $\chi_{DP}(G) \le 3$ (\cite{L22}). Applying Fact~\ref{fact: planar} with $t=7$ to $G$ gives $|E(G)| < (24/13) n$. 

(iv) It is known that $\chi_{DP}(G) \le 4$ (\cite{KO18}). Applying Fact~\ref{fact: planar} with $t=6$ to $G$ gives $|E(G)| < (21/11) n$. 

(v) It is known that $\chi_{DP}(G) \le 3$ (\cite{LLYY19}). Applying Fact~\ref{fact: planar} with $t=5$ to $G$ gives $|E(G)| < 2n$, which is too weak for our purpose. However, it is possible to give a short discharging argument~\footnote{Dan Cranston, personal communication, 2023.} that shows that $G$ has average face length at least $4+(4/11)$ which implies that $|E(G)| < (24/13) n$.
\end{proof}

Since $P_{DP}(G, k) \leq P_{\pm}(G,k) \leq P(G,k)$ and $\chi(G) \leq \chi_{\pm}(G) \leq \chi_{DP}(G)$, all the bounds above also apply to the number of colorings of signed planar graphs. However, the bound on the number of edges required in Corollary~\ref{cor:signed lower bound} is much weaker than that in Theorem~\ref{thm:using d=k-2 result}(ii).  So, we can get better bounds on $P_{\pm}(G,k)$ by applying Corollary~\ref{cor:signed lower bound}.  From the result of Postle and Smith-Roberge~\cite{PS23}, $P_{DP}(G,5) \ge 2^{n/67}$. For any planar graph $G$, $\chi_{\pm}(G)\leq 5$ and this bound is tight (\cite{SV21}); so, $P_{\pm}(G,5) \ge 2^{n/67}$. We improve this bound using Cor~\ref{cor:signed lower bound} to $P_{\pm}(G,5)\geq 5^{\frac{n}{4}}$ for any planar graph $G$. In the case of planar graphs of girth 5, we get $P_{\pm}(G,3)\geq 3^{\frac{n}{6}}$ which matches the bound on $P_{DP}(G,3)$ from~\cite{DKM22}~\footnote{This is because all permutations in the field of order 3 are linear; so, we do not gain anything by focusing on only linear permutations.}

We show below there are exponentially many colorings of all signed triangle-free planar graphs, even though it is not known if there is such a result for number of DP-colorings of these graphs (see Section~\ref{triangle-free planar} below).

\begin{thm}\label{thm: Planar3}
Let $G$ be an $n$-vertex planar graph. The following statements are true whenever $k>2$ and $k$ is a power of a prime.
\begin{enumerate}[(i)]

\item
We have $P_{\pm}(G,k)\geq k^{\frac{n(k-4)}{k-1}}$ for all $k\geq 5$. In particular,
$P_{\pm}(G,5)\geq 5^{\frac{n}{4}}$.
\item If $G$ is triangle free, then $P_{\pm}(G,k)\geq k^{\frac{n(k-3)}{k-1}}$ for all $k\geq 4$. In particular, 
$P_{\pm}(G,4)\geq 4^{\frac{n}{3}}$.
\item If the girth of $G$ is at least 5, then $P_{\pm}(G,k)\geq k^{\frac{n(3k-8)}{3(k-1)}}$ for all $k\geq 3$. In particular,
$P_{\pm}(G,3)\geq 3^{\frac{n}{6}}.$
\item If $G$ has no cycle of  length in $\{4,5,6,7,8\}$, then $P_{\pm}(G,k)\geq k^{\frac{n(5k-14)}{5(k-1)}}
$ for all $k\geq 3$. In particular,
$P_{\pm}(G,3)\geq 3^{\frac{n}{10}}.$
\end{enumerate}
\end{thm}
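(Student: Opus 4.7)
The plan is to apply Corollary~\ref{cor:signed lower bound} in each of the four parts. That result yields $P_{\pm}(G,k) \ge k^{n - m/(k-1)}$ once we verify the two hypotheses $\chi_{\pm}(G) \le k$ and $m \le (k-1)n$. So the template is: bound the number of edges $m$ from above in terms of $n$, verify the chromatic hypothesis, then substitute and simplify.

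For the edge bounds I would use Euler's formula together with the structural conditions. In (i), every planar graph has $m \le 3n-6$. In (ii), a triangle-free planar graph has $m \le 2n-4$. In (iii), a planar graph of girth at least $5$ has $m \le 5(n-2)/3 \le 5n/3$. In (iv), Fact~\ref{fact: planar} with $t=8$ gives $m < 9n/5$. In each case the stated range of $k$ keeps us well within the regime $m \le (k-1)n$, so that hypothesis is automatic.

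For the chromatic hypothesis I would invoke: $\chi_{\pm}(G) \le 5$ for all planar graphs (\cite{MRS16}) in (i); $\chi_{\pm}(G) \le \chi_{DP}(G) \le 4$ in (ii), since any triangle-free planar graph is $3$-degenerate by Euler's formula; and $\chi_{\pm}(G) \le \chi_{DP}(G) \le 3$ in (iii), using that planar graphs of girth $\ge 5$ are DP-$3$-colorable. With these hypotheses secured, the exponents follow from direct algebra; for example in (iii),
$$n - \frac{m}{k-1} \ge n - \frac{5n/3}{k-1} = \frac{n(3k-8)}{3(k-1)},$$
and the ``in particular'' bound $P_{\pm}(G,3) \ge 3^{n/6}$ comes from setting $k=3$. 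The analogous computations for (i), (ii), (iv) give $n(k-4)/(k-1)$, $n(k-3)/(k-1)$, and $n(5k-14)/(5(k-1))$ respectively.

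The main obstacle is part (iv): the facts quoted earlier only yield $\chi_{DP}(G) \in \{3,4\}$ for planar graphs avoiding cycles of length in $\{4,\ldots,8\}$, so $\chi_{\pm}(G) \le 3$ is not immediately available via $\chi_{\pm} \le \chi_{DP}$. Here I would need to cite (or establish via a discharging argument tailored to the signing) a signed-coloring-specific $3$-colorability result for this family; such a result is plausible because the underlying graph class is known to be $3$-choosable and the sparsity is quite restrictive. Once this chromatic hypothesis is in place, the final inequality $n - (9n/5)/(k-1) = n(5k-14)/(5(k-1))$ is immediate, and specializing to $k=3$ yields the $3^{n/10}$ bound.
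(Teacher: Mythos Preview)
Your approach is exactly the paper's: apply Corollary~\ref{cor:signed lower bound} with the Euler-formula edge bounds and the appropriate chromatic inputs, then simplify the exponent. For parts (ii) and (iii) the paper cites \cite{MRS16} directly for $\chi_{\pm}(G)\le 4$ and $\chi_{\pm}(G)\le 3$ rather than routing through $\chi_{DP}$ as you do, but either route works. The gap you flag in (iv) is genuine and is closed in the paper by citing Hu and Li~\cite{HL18}, who prove that every signed planar graph without cycles of length from $4$ to $8$ is $3$-colorable; with that reference in hand your argument for (iv) is complete.
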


\begin{proof}
Each of these results follows from Corollary~\ref{cor:signed lower bound} using the following observations.

(i)
It is known that $\chi_{\pm}(G)\leq 5$ (\cite{MRS16,SV21}). Since $G$ is planar, it  follows from Euler's formula  that $|E(G)|\leq 3n-6$. 

(ii)
It is known that $\chi_{\pm}(G)\leq 4$ (\cite{MRS16}). Since $G$ is triangle free and planar, it  follows from Euler's formula  that $|E(G)|\leq 2n-4$. 

(iii)
It is known that $\chi_{\pm}(G)\leq 3$ (\cite{MRS16}). Since $G$ is planar with girth at least 5, it  follows from Euler's formula  that $|E(G)|\leq \frac{5}{3}(n-2)$. 

(iv)
It is known that $\chi_{\pm}(G) \leq 3$ (\cite{HL18}). Applying Fact\ref{fact: planar} with $t=8$ to $G$ gives $|E(G)| < \frac{9}{5}(n-2)$. 
\end{proof}

\subsection{Triangle-free Planar Graphs}\label{triangle-free planar}

Gr\"{o}tzsch's famous theorem~\cite{G59} states that triangle-free planar graphs are 3-colorable. However there exist triangle-free planar graphs are that are not 3-choosable~\cite{V95} and hence not DP-3-colorable. By a degeneracy argument, it follows that such graphs are 4-choosable and DP-4-colorable. 

Let $G$ be a triangle-free planar $n$-vertex graph. It was conjectured by Thomassen~\cite{T07} that $G$ has exponentially many 3-colorings. The best known result towards this is the subexponential bound $2^{\sqrt{n/212}}$ in~\cite{ADPT13}. However, it was recently shown that the conjecture is false (\cite{T23}) and the best we can hope for is $64^{n^{0.731}}$ 3-colorings (\cite{DP22}). So it's natural to ask whether there are exponentially many list-4-colorings, and more strongly, exponentially many DP-4-colorings of such graphs.

\begin{customconj}{\ref{conj: triangle-free planar}}
    There exists a constant $c >1$, such that for any triangle-free planar $n$-vertex graph $G$, $P_{DP}(G,4) \ge c^n$. 
\end{customconj}

Towards this conjecture, we show below that there are exponentially many list-4-colorings of triangle-free planar graphs with an easy application of Proposition~\ref{prop: listcoloring} (this was first shown in \cite{KP18} in context of triangle-free graphs embedded in a fixed surface of genus $g$; we can prove a similar result along the lines of Theorem~\ref{thm: Planar1}). We show this bound also holds for the number of colorings of signed triangle-free planar graphs using Corollary~\ref{cor:signed lower bound}. In case of DP-4-colorings, we provide some strong evidence towards Conjecture~\ref{conj: triangle-free planar}. Recall that the number of edges of a triangle-free planar $n$-vertex graph is at most $2n-4$. Our result shows that such a graph has exponentially many DP-4-colorings if its number of edges is at most $(2-c)n$ for some $c>1/2n$.

\begin{customthm} {\ref{thm: Planar4}}
Let $G$ be an $n$-vertex triangle-free planar graph with $m$ edges. The following statements are true.
\begin{enumerate}[(i)]
\item $P_{\ell}(G,4)\geq 4^{\frac{n+4}{3}}$.

\item $P_{\pm}(G,4)\geq 4^{\frac{n+4}{3}}$.

%\item Suppose $T$ is a spanning tree of $G$.  Let $G'$ be the multigraph obtained from $G$ by adding $1$ parallel edges to each edge $e \in E(G) - E(T)$.  If $\chi_{DP}(G') \leq 4$, then $P_{DP}(G, 4) \geq 4^{(n(4)-2(2n-4)-1)/(3)}.$ 

\item Suppose there exists $c>0$ such that $m \le (2-c)n$ and $cn \geq 1/2$, then $P_{DP}(G,4) \ge 4^{(4cn-1)/3}.$

\end{enumerate}

\end{customthm}

\begin{proof}
Recall $m \le 2n-4$ and $\chi_{\ell}(G)\leq 4$. We also know that $\chi_{\pm}(G)\leq 4$ (\cite{MRS16}). Now, parts (i) and (ii) follow from  Proposition~\ref{prop: listcoloring} and Corollary~\ref{cor:signed lower bound}, respectively.

Since $\chi_{DP}(G)\leq 4$, part (iii) follows from Theorem~\ref{thm:using d=k-2 result}(ii).
\end{proof}

\vspace*{1cm}
{\bf Acknowledgements.} The authors thank Dan Cranston for a very helpful conversation with regard to application of discharging arguments to bound the number of edges of sparse planar graphs as needed in Section~\ref{planar}.

\end{document}